\newtheorem{assumption}[definition]{Assumption}
\crefname{figure}{Figure}{Figures}
\crefname{assumption}{Assumption}{Assumptions}
\newcommand{\term}{\emph}
\newcommand{\field}[1]{\mathbb{#1}}
\newcommand{\N}{\mathbb{N}}
\newcommand{\R}{\field{R}}
\newcommand{\extR}{\overline \R}
\newcommand{\norm}[1]{\|#1\|}
\newcommand{\abs}[1]{|#1|}
\newcommand{\inv}[1]{#1^{-1}}
\newcommand{\grad}{\nabla}
\newcommand{\freevar}{\,\boldsymbol\cdot\,}
\newcommand{\Union}\bigcup
\newcommand{\Isect}\bigcap
\newcommand{\union}\cup
\newcommand{\isect}\cap
\newcommand{\bigunion}\bigcup
\newcommand{\bigisect}\bigcap
\newcommand{\powerset}{\mathcal{P}}
\newcommand{\defeq}{:=}
\newcommand{\subdiff}{\partial}
\newcommand{\MIN}[1]{{\underline {#1}}}
\newcommand{\MAX}[1]{{\overline {#1}}}
\def \uminus@sym{\setbox0=\hbox{$\cup$}\rlap{\hbox 
        to\wd0{\hss\raise0.5ex\hbox{$\scriptscriptstyle{-}$}\hss}}\box0}
    \def \uminus    {\mathrel{\uminus@sym}}
\newcommand{\mathvar}[1]{\textup{#1}}
\newcommand{\iprod}[2]{\langle #1,#2\rangle}
\def \weaktostar@sym{\setbox0=\hbox{$\rightharpoonup$}\rlap{\hbox 
        to\wd0{\hss\raise1ex\hbox{$\scriptscriptstyle{*\,}$}\hss}}\box0}
    \def \weaktostar    {\mathrel{\weaktostar@sym}}
\renewcommand{\tilde}{\widetilde}
\newcommand{\Eabs}{\mathcal{E}}
\renewcommand{\L}{\mathcal{L}}
\renewcommand{\d}{\,d} 
\newcommand{\TGV}{\mathvar{TGV}}
\newcommand{\TV}{\mathvar{TV}}
\crefname{Algorithm}{Algorithm}{Algorithms}
\def\extR{\overline \R}
\def\linear{\mathcal{L}}
\newcommand{\linearLArrow}[1][]{\linear_{\triangleleft\ifx&#1&\else,\,#1\fi}}
\newcommand{\linearLArrowSpecial}[1][]{\linear^{\star}_{\triangleleft\ifx&#1&\else,\,#1\fi}}
\def\realopt#1{\widehat #1}
\def\this#1{#1^i}
\def\nexxt#1{#1^{i+1}}
\def\overnext#1{\bar #1^{i+1}}
\def\boundfx{\delta^x}
\def\boundfy{\delta^y}
\def\realoptu{{\realopt{u}}}
\def\realoptv{{\realopt{v}}}
\def\realoptx{{\realopt{x}}}
\def\realopty{{\realopt{y}}}
\def\nextu{\nexxt{u}}
\def\nextx{\nexxt{x}}
\def\nexty{\nexxt{y}}
\def\thisu{\this{u}}
\def\thisx{\this{x}}
\def\thisy{\this{y}}
\def\overnextx{\overnext{x}}
\def\E{\mathbb{E}}
\def\P{\mathbb{P}}
\def\wcase{\mathbb{w}}
\def\Tau{T}
\def\TauTest{\Phi}
\def\tauTest{\phi}
\def\SigmaTest{\Psi}
\def\sigmaTest{\psi}
\def\gap{\mathcal{G}}
\def\Neigh{\mathcal{V}}
\def\GammaLift#1{\Xi_{#1}}
\def\SAlg{\mathcal{O}}
\def\iset#1{\mathring{#1}}
\newcommand{\Test}{Z}
\newcommand{\Precond}{M}
\newcommand{\Step}{W}
\newcommand{\Random}{\mathcal{R}}
\def\MAX{\overline}
\def\MIN{\underline}
\def\kappafamily{\mathcal{K}(K, \mathcal{P}, \mathcal{Q})}
\newcommand{\convex}{\mathcal{C}}
\DeclareFontFamily{U}{mathx}{\hyphenchar\font45}
\DeclareFontShape{U}{mathx}{m}{n}{<-> mathx10}{}
\DeclareSymbolFont{mathx}{U}{mathx}{m}{n}
\DeclareMathAccent{\widebar}{0}{mathx}{"73}
\newcommand{\Penalty}{\Delta}
\newcommand{\MetricDiff}{D}
\def\TAGSTRUCT{S-}
\title{Block-proximal methods with spatially adapted acceleration}
\begin{document}

\date{V4: 2018-11-19\thanks{V3: 2017-11-06, V2: 2017-03-16, V1: 2016-09-16}}
\author{
    Tuomo Valkonen\thanks{ModeMat, Escuela Politécnica Nacional, Quito, Ecuador; \emph{previously} Department of Mathematical Sciences, University of Liverpool, United Kingdom. \email{tuomo.valkonen@iki.fi}}
    }

\maketitle

\begin{abstract}
    We study and develop (stochastic) primal--dual block-coordinate descent methods for convex problems based on the method due to Chambolle and Pock.
    Our methods have known convergence rates for the iterates and the ergodic gap: $O(1/N^2)$ if each block is strongly convex, $O(1/N)$ if no convexity is present, and more generally a mixed rate $O(1/N^2)+O(1/N)$ for strongly convex blocks, if only some blocks are strongly convex.
    Additional novelties of our methods include blockwise-adapted step lengths and acceleration, as well as the ability to update both the primal and dual variables randomly in blocks under a very light compatibility condition. In other words, these variants of our methods are doubly-stochastic.
	We test the proposed methods on various image processing problems, where we employ pixelwise-adapted acceleration.

    \textbf{Get the version from \url{http://tuomov.iki.fi/publications/}: citations are broken/poorly formatted in this one due arXiv being stuck in the 70s and not supporting biblatex (or 80s bibtex for that matter), hence not modern bibliography styles or utf8.}
\end{abstract}


\section{Introduction}
\label{sec:intro}

We want to efficiently solve optimisation problems of the form
\begin{equation}
    \label{eq:form}
    \tag{P$_0$}
    \min_x~G(x)+F(Kx),
\end{equation}
arising, in particular, from 
image processing and inverse problems. We assume $G: X \to \extR$ and $F: Y \to \extR$ to be convex, proper, and lower semicontinuous on Hilbert spaces $X$ and $Y$ and $K \in \linear(X; Y)$ to be a bounded linear operator. We are particularly interested in block-separable%
\begin{equation}
    \label{eq:g-fstar-separable}
    \tag{GF}
    G(x) = \sum_{j=1}^m G_j(P_j x),
    \quad\text{and}\quad
    F^*(y)  = \sum_{\ell=1}^n F^*_\ell(Q_\ell y),
\end{equation}
where $F^*$ is the Fenchel conjugate of $F$.
The operators $P_1,\ldots,P_m$ are projections in $X$ with $\sum_{j=1}^m P_j = I$ and $P_jP_i=0$ if $i \ne j$. Likewise, $Q_1,\ldots,Q_n$ are projection operators in $Y$.
We assume all the component functions $G_j$ and $F^*_\ell$ to be convex, proper, and lower semicontinuous, and the subdifferential sum rule to hold for the expressions \eqref{eq:g-fstar-separable}.

Several first-order optimisation methods have been developed for \eqref{eq:form} without block-separable structure, typically both $G$ and $F$ convex and $K$ linear. Recently also some non-convexity and non-linearity has been introduced \cite{bolte2014proximal,mollenhoff2014primal,tuomov-cpaccel,ochs2014ipiano}.
In applications to image processing and data science, one of $G$ or $F$ is typically non-smooth. Effective algorithms operating directly on the primal problem \eqref{eq:form}, or its dual, therefore tend to be a form of classical forward--backward splitting, occasionally called iterative soft-thresholding \cite{daubechies2004surrogate,beck2009fista}.

In big data optimisation several forward--backward block-coordinate descent methods have been developed for \eqref{eq:form} with block-separable $G$.
On each step, the methods update only a random subset of blocks $x_j \defeq P_j x$ in parallel; see the review \cite{wright2015coordinate} and the original articles
\cite{nesterov2012efficiency,
      richtarik2012parallel,
      fercoq2013approx,
      richtarik2013distributed,
      qu2014randomized,
      zhao2014stochastic,
      shalev-shwartz2014accelerated,
      csiba2015stochastic,
      combettes2016stochastic,
      peng2015arock,
      bertsekas2015incremental}.
Typically $F$ is assumed smooth, and, often, each $G_j$ strongly convex.
Besides parallelism, an advantage of these methods is the exploitation of \emph{blockwise} factors of smoothness and strong convexity. These can help convergence by being better than the global factor.

Unfortunately, primal-only and dual-only stochastic methods, as discussed above, are rarely applicable to image processing problems. These, and many other problems, do not satisfy the assumed separability and smoothness assumptions. On the other hand, additional Moreau--Yosida (aka.~Huber, aka.~Nesterov) regularisation of the problem, which would provide the required smoothness, would alter the problem, losing essential non-smooth characteristics.
Generally, even without the splitting of the problem into blocks and the introduction of stochasticity, primal-only or dual-only methods can be inefficient on more complicated problems. Proximal steps, which are typically used to deal with non-smooth components of the problem, can in particular be as expensive as the original optimisation problems itself. In order to make these steps cheap, the problem has to be formulated appropriately.
Such a reformulation can often be provided through primal--dual approaches.

With the Fenchel conjugate $F^*$, we can write \eqref{eq:form} as
\begin{equation}
    \label{eq:form-saddle}
    \min_x \max_y~G(x)+\iprod{Kx}{y}-F^*(y).
\end{equation}
The method of Chambolle and Pock \cite{chambolle2010first,pock2009mumford} is popular for this formulation. It is also called the PDHGM (Primal-Dual Hybrid Gradient Method, Modified) in \cite{esser2010general} and the PDPS (Primal–Dual Proximal Splitting) in \cite{tuomov-inertia}.
It consists of alternating proximal steps on $x$ and $y$ combined with an over-relaxation step to ensure convergence.
The method is closely related to the classical ADMM and Douglas--Rachford splitting. The acronym PDHGM arises from the earlier PDHG \cite{zhu2008efficient} that is convergent only in special cases \cite{he2014convergence}. These connections are discussed in \cite{esser2010general}.


While early block-coordinate methods only worked with a primal or a dual variable, recently stochastic primal--dual approaches based on the ADMM and the PDHGM have been proposed %
\cite{suzuki2013stochastic,
      zhang2014stochastic,
      fercoq2015coordinate,
      bianchi2015stochastic,
      peng2016coordinate,
      pesquet2014class,
      yu2015doubly}. %
Moreover, variants of the ADMM that deterministically update multiple blocks in parallel and afterwards combine the results for the Lagrange multiplier update have been introduced \cite{he2015block}. As with the primal- or dual-only methods, these algorithms can improve convergence by exploiting local properties of the problem.
Besides \cite{suzuki2013stochastic,zhang2014stochastic,yu2015doubly} that have restrictive smoothness and strong convexity requirements, little is known about convergence rates.

\emph{In this paper, we will derive block-coordinate descent variants of the PDHGM with known convergence rates:} $O(1/N^2)$ if each $G_j$ is strongly convex, $O(1/N)$ without any strong convexity, and mixed $O(1/N^2)+O(1/N)$ if some of the $G_j$ are strongly convex. These rates apply to an ergodic duality gap and strongly convex blocks of the iterates.
Our methods have the novelty of blockwise-adapted step lengths.
In the imaging applications of \cref{sec:numerical} we will even employ pixelwise-adapted step lengths.
Moreover, we can update random subsets of \emph{both} primal and dual blocks under a light ``nesting condition'' on the sampling scheme. Such ``doubly-stochastic'' updates have previously been possible only in very limited settings \cite{yu2015doubly}.

Our present work is based on \cite{tuomov-cpaccel} on the acceleration of the PDHGM when $G$ is strongly convex only on a subspace: the deterministic two-block case $m=2$ and $n=1$ of \eqref{eq:g-fstar-separable}.
Besides enabling (doubly-)stochastic updates and an arbitrary number of both primal \emph{and} dual blocks, in the present work, we derive simplified step length rules through a more careful analysis.

The more abstract basis of our present work has been split out in \cite{tuomov-proxtest}. There we study preconditioning of abstract proximal point methods and ``testing'' by suitable operators as means of obtaining convergence rates. We recall the relevant aspects of this theory through the course of \cref{sec:background,sec:block}.
In the first of these sections, we start by going through notation and previous research on the PDHGM in more detail. Then we develop the rough structure of our proposed method. This will depend on several structural conditions that we introduce in \cref{sec:background}. Afterwards in \cref{sec:block} we develop convergence estimates based on technical conditions on the various step length and testing parameters.
These conditions need to be verified through the development of explicit parameter update rules. We do this in \cref{sec:block-continued} along with proving the claimed convergence rates (\cref{thm:convergence-basic} and its corollaries). We also present there the final, detailed, versions of our proposed algorithms: \cref{alg:alg-blockcp} (doubly stochastic) and \cref{alg:alg-blockcp-fulldual} (simplified).
We finish with numerical experiments in \cref{sec:numerical}.




\section{Background and overall structure of the algorithm}
\label{sec:background}

To make the notation definite, we write $\linear(X; Y)$ for the space of bounded linear operators between Hilbert spaces $X$ and $Y$. The identity operator we denote by $I$.
For $T, S \in \linear(X; X)$, we use $T \ge S$ to mean that $T-S$ is positive semi-definite; in particular $T \ge 0$ means that $T$ is positive semi-definite.
Also for possibly non-self-adjoint $T$, we introduce the inner product and norm-like notations
\begin{equation}
    \label{eq:iprod-def}
    \iprod{x}{z}_T \defeq \iprod{Tx}{z},
    \quad
    \text{and}
    \quad
    \norm{x}_T \defeq \sqrt{\iprod{x}{x}_T},
\end{equation}
the latter only defined for positive semi-definite $T$.
We write $T \simeq T'$ if 
$\iprod{x}{x}_{T'-T}=0$ for all $x$.

We denote by $\convex(X)$ the set of convex, proper, lower semicontinuous functionals from a Hilbert space $X$ to $\extR \defeq [-\infty,\infty]$.
With $G \in \convex(X)$, $F^* \in \convex(Y)$, and $K \in \linear(X; Y)$, we then wish to solve the minimax problem%
\begin{equation}
    \label{eq:problem}
    \tag{P}
    \min_{x \in X} \max_{y \in Y} \ G(x) + \iprod{K x}{y} - F^*(y),
\end{equation}
assuming the existence of a solution $\realoptu=(\realoptx, \realopty)$ satisfying the optimality conditions
\begin{equation}
    \label{eq:oc}
    \tag{OC}
    -K^* \realopty \in \subdiff G(\realoptx),
    \quad\text{and}\quad
    K \realoptx \in \subdiff F^*(\realopty).
\end{equation}

For the stochastic aspects of our work, we denote by $(\Omega, \SAlg, \P)$ the \term{probability space} consisting of the set $\Omega$ of possible realisation of a random experiment, by $\SAlg$ a $\sigma$-algebra on $\Omega$, and by $\P$ a probability measure on $(\Omega, \SAlg)$.
We denote the expectation corresponding to $\P$ by $\E$, the conditional probability with respect to a sub-$\sigma$-algebra $\SAlg' \subset \SAlg$ by $\P[\freevar|\SAlg']$, and the conditional expectation by $\E[\freevar|\SAlg']$. We refer to \cite{shiryaev1996probability} for more details.

We also use the next non-standard notation: If $\SAlg$ is a $\sigma$-algebra on the space $\Omega$, we denote by $\Random(\SAlg; V)$ the space of $V$-valued random variables $A$, such that $A: \Omega \to V$ is $\SAlg$-measurable. 

\subsection{Preconditioned proximal point methods; testing for rates}

We use the notation
\[
    u=(x, y)
\]
to combine the primal variable $x$ and dual variable $y$ into a single variable $u$.
Following \cite{he2012convergence,tuomov-cpaccel}, the primal--dual method of Chambolle and Pock \cite{chambolle2010first} (PDHGM) may then be written in proximal point form as%
\begin{equation}
    \label{eq:prox-update}
    \tag{PP$_0$}
    0 \in H(\nextu) + L_i(\nextu-\thisu)
\end{equation}
for a monotone operator $H$ encoding the optimality conditions \eqref{eq:oc} as $0 \in H(\realoptu)$, and a \emph{preconditioning} or \emph{step length operator} $L_i=L^0_i$. These are
\begin{equation}
    \label{eq:cpoper0}
    H(u) \defeq
        \begin{pmatrix}
            \subdiff G(x) + K^* y \\
            \subdiff F^*(y) -K x
        \end{pmatrix},
    \quad\text{and}\quad
    L^0_i \defeq
        \begin{pmatrix}
            \inv\tau_i & -K^* \\
            -\omega_i K & \inv\sigma_{i+1}
        \end{pmatrix}.
\end{equation}
Here $\tau_i, \sigma_{i+1}>0$ are step length parameters, and $\omega_i>0$ an over-relaxation parameter. 
In the basic version of the algorithm, $\omega_i=1$, $\tau_i \equiv \tau_0$, and $\sigma_i \equiv \sigma_0$, assuming $\tau_0 \sigma_0 \norm{K}^2 < 1$.
Observe that we may equivalently parametrise the algorithm by $\tau_0$ and $\delta = 1 - \norm{K}^2 \tau_0\sigma_0 > 0$. 
The method has $O(1/N)$ rate for the ergodic duality gap that we will return to in \cref{sec:gap}.

If $G$ is strongly convex with factor $\gamma>0$, we may for $\tilde\gamma \in (0,\gamma]$ accelerate
\begin{equation}
    \label{eq:cpaccel}
    \omega_i \defeq 1/\sqrt{1+2\tilde\gamma\tau_i},
    \quad
    \tau_{i+1} \defeq \tau_i\omega_i,
    \quad\text{and}\quad
    \sigma_{i+1} \defeq \sigma_i/\omega_i.
\end{equation}
This gives $O(1/N^2)$ convergence of  $\norm{x^N-\realoptx}^2$ to zero. If $\tilde\gamma \in (0, \gamma/2]$, we also obtain $O(1/N^2)$ convergence of an ergodic duality gap.

In \cite{tuomov-cpaccel}, we extended the PDHGM to partially strongly convex problems: in \eqref{eq:g-fstar-separable} this corresponded to the primal two-block and dual single-block case $m=2$ and $n=1$ with only $G_1$ assumed strongly convex.
This extension was based on taking in \eqref{eq:prox-update} the preconditioner
\begin{equation}
    \label{eq:cpoper-m}
    L_i =
        \begin{pmatrix}
            \inv\Tau_i & -K^* \\
            -\omega_i K & \inv\Sigma_{i+1}
        \end{pmatrix}
\end{equation}
for \emph{invertible} $\Tau_i =  \tau_{1,i} P_1 + \tau_{2,i} P_2 \in \L(X; X)$ and $\Sigma_{i+1} = \sigma_{i+1} I \in \L(Y; Y)$. After simple rearrangements of \eqref{eq:prox-update}, the resulting algorithm could be written more explicitly as
\begin{subequations}%
\label{eq:block-cp-basic}%
\begin{align}%
    \nextx & \defeq \inv{(I+\Tau_i\subdiff G)}(\thisx -  \Tau_i K^*\thisy), \\
    \nexty & \defeq \inv{(I+\Sigma_{i+1}\subdiff F^*)}(\thisy + \Sigma_{i+1} K((1+\omega_i)\nextx-\omega_i\thisx)).
\end{align}%
\end{subequations}%
Since $G$ is as assumed separable, the first, primal update, splits into separate updates for $\nextx_1 \defeq P_1\nextx$ and $\nextx_2 \defeq P_2\nextx$.
Note that this explicit form of the algorithm does not require $\Tau_i$ and $\Sigma_{i+1}$ to be invertible, unlike \eqref{eq:prox-update} with the choice \eqref{eq:cpoper-m}, so suggests we could develop stochastic methods that randomly choose one, two, or no primal blocks to update.

To study convergence, it is, however, more practical to work with implicit formulations, such as \eqref{eq:prox-update}. We will shortly see how this works. To make  \eqref{eq:prox-update} work with non-invertible $\Tau_i$ and $\Sigma_{i+1}$, let us reformulate it slightly. In fact, let us define%
\begin{equation}
    \label{eq:test-precond-first}
    \Step_{i+1} \defeq
    \begin{pmatrix}
        \Tau_i & 0 \\
        0 & \Sigma_{i+1}
    \end{pmatrix},
    \quad\text{and (for the moment)}\quad
    \Precond_{i+1} =
    \begin{pmatrix}
        I & -\Tau_i K^* \\
        -\tilde \omega_i \Sigma_{i+1}  K & I
    \end{pmatrix}.
\end{equation}
With this, whether or not $\Tau_i$ and $\Sigma_{i+1}$ are invertible, \eqref{eq:block-cp-basic} can be written as the preconditioned proximal point iteration%
\begin{equation}
    \label{eq:pp}
    \tag{PP}
    \Step_{i+1} H(\nextu) + \Precond_{i+1}(\nextu-\thisu) \ni 0,
\end{equation}
This will be the abstract form of the algorithms that we will develop, however, with the exact form of $\Tau_{i+1}$, $\Sigma_{i+1}$, and $\Precond_{i+1}$ still to be refined.

To study the convergence of \eqref{eq:pp}, we apply to the \emph{testing} framework introduced in \cite{tuomov-cpaccel,tuomov-proxtest}. The idea is to apply $\iprod{\freevar}{\nextu-\realoptu}_{\Test_{i+1}}$ for a \term{testing operator} $\Test_{i+1}$ to \eqref{eq:pp} to ``test'' it.
Thus
\begin{equation}
    \label{eq:testing0}
    0 \in \iprod{\Step_{i+1} H(\nextu) + \Precond_{i+1}(\nextu-\thisu)}{\nextu-\realoptu}_{\Test_{i+1}}.
\end{equation}
We need $\Test_{i+1} \Precond_{i+1}$ to be self-adjoint and positive semi-definite. This guarantees that $\Test_{i+1}\Precond_{i+1}$ can be used to form the local semi-norm $\norm{\freevar}_{\Test_{i+1}\Precond_{i+1}}$. Indeed, assuming for some linear operator $\Xi_{i+1}$ that $H$ has the operator-relative (strong) monotonicity property
\begin{equation}
    \label{eq:xi-monotonicity}
    \iprod{H(u')-H(u)}{u'-u}_{\Test_{i+1}\Step_{i+1}} \ge \norm{u-u'}_{\Test_{i+1}\Xi_{i+1}}^2
    \quad (u, u' \in X \times Y),
\end{equation}
then a simple application of Pythagoras' identity
\[
    \iprod{\nextu-\thisu}{\nextu-\realoptu}_{\Test_{i+1}\Precond_{i+1}}
    = \frac{1}{2}\norm{\nextu-\thisu}_{\Test_{i+1}\Precond_{i+1}}^2
    - \frac{1}{2}\norm{\thisu-\realoptu}_{\Test_{i+1}\Precond_{i+1}}^2
    + \frac{1}{2}\norm{\nextu-\realoptu}_{\Test_{i+1}\Precond_{i+1}}^2
\]
yields
\begin{equation*}
  \frac{1}{2}\norm{\nextu-\realoptu}_{\Test_{i+1}(\Precond_{i+1}+2\Xi_{i+1})}^2
  +
  \frac{1}{2}\norm{\nextu-\thisu}_{\Test_{i+1}\Precond_{i+1}}^2
  \le
  \frac{1}{2}\norm{\thisu-\realoptu}_{\Test_{i+1}\Precond_{i+1}}^2.
\end{equation*}
If $\Test_{i+2}\Precond_{i+2} \le \Test_{i+1}(\Precond_{i+1}+2\Xi_{i+1})$ for all $i$, then summing over $i=0,\ldots,N-1$ gives
\begin{equation}
  \label{eq:basic-convergence-estimate}
  \frac{1}{2}\norm{u^N-\realoptu}_{\Test_{N+1}\Precond_{N+1}}^2
  +
  \sum_{i=0}^{N-1}\frac{1}{2}\norm{\nextu-\thisu}_{\Test_{i+1}\Precond_{i+1}}^2
  \le
  \frac{1}{2}\norm{u^0-\realoptu}_{\Test_{1}\Precond_{1}}^2.
\end{equation}
We therefore see that $\Test_{i+1}\Precond_{i+1}$ measures the rates of convergence of the iterates.
If our iterations are stochastic, to obtain deterministic estimates, we can simply take the expectation in \eqref{eq:basic-convergence-estimate}. However, to obtain estimates on a duality gap, we need to work significantly more. We will, therefore, in the beginning of \cref{sec:block}, after introducing all the relevant concepts and finalising the setup for the present work, quote the appropriate results from \cite{tuomov-proxtest}.

\subsection{Stochastic and deterministic block updates}

We want to update any subset of any number of primal and dual blocks stochastically. Compatible with the separable structure \eqref{eq:g-fstar-separable} of $G$ and $F^*$, we therefore construct from individual (possibly random) step length and testing parameters, $\tau_{j,i},\sigma_{\ell,i+1} \ge 0$ and $\tauTest_{j,i}, \sigmaTest_{\ell,i+1}>0$, as well as random subsets $S(i) \subset \{1,\ldots,m\}$ and $V(i+1) \subset \{1,\ldots,n\}$ the step length and testing operators
\begin{taggedsubequations}{S}%
\label{eq:main-structure}
\begin{align}%
    \Step_{i+1} &\defeq
    \begin{pmatrix}
        \Tau_i & 0 \\
        0 & \Sigma_{i+1}
    \end{pmatrix}
    \quad\text{and}
    &
    \Test_{i+1} & \defeq
    \begin{pmatrix}
        \TauTest_i & 0 \\
        0 & \SigmaTest_{i+1}
    \end{pmatrix}
    \quad\text{for}
    \\
    \Tau_i & \defeq \sum_{j \in S(i)} \tau_{j,i}P_{j},
    &
    \Sigma_{i+1} & \defeq \sum_{\ell \in V(i+1)} \sigma_{\ell,i+1} Q_\ell,
    \\
    \TauTest_i & \defeq \sum_{j=1}^m \tauTest_{j,i} P_j,
    \quad\text{and}
    &
    \SigmaTest_{i+1} & \defeq \sum_{\ell=1}^n \sigmaTest_{\ell,i+1} Q_\ell
    \quad (i \ge 0).
\intertext{We moreover take as the preconditioner}
  \label{eq:precond-lambda}
  \Precond_{i+1} & \defeq \begin{pmatrix}
    I & -\inv\TauTest_i\Lambda_i^* \\
    -\inv\SigmaTest_{i+1}\Lambda_i & I
  \end{pmatrix}
  \quad\text{for}
  &
  \Lambda_i & \defeq K\iset\Tau_i^*\TauTest_i^*  - \SigmaTest_{i+1}\iset \Sigma_{i+1} K
  \quad\text{with}
  \\
  \label{eq:iset-tau}
  \iset\Tau_i & \defeq \sum_{j \in \iset S(i)} \tau_{j,i} P_j,
  &
  \iset S(i) & \subset S(i),
  \\
  \label{eq:iset-sigma}
  \iset \Sigma_{i+1} & \defeq \sum_{\ell \in \iset V(i+1)} \sigma_{\ell,i+1} Q_\ell,
  \quad\text{and} &
  \iset V(i+1) & \subset V(i+1).
\end{align}
\end{taggedsubequations}%

The subsets $S(i)$ and $V(i+1)$ are the indices of the blocks
\begin{equation}
  \label{eq:coord-notation}
    x_j \defeq P_j x,
    \quad\text{and}\quad
    y_\ell \defeq Q_\ell y
\end{equation}
of the variables $x$ and $y$ that are to be updated on iteration $i$.\footnote{The iteration index is off-by-one for $\sigma_{\ell, i+1}$ and $\sigmaTest_{\ell,i+1}$ for reasons of historical development of the Chambolle--Pock method, when it was not written as a preconditioned proximal point method.}
Hence $\Tau_i$ and $\Sigma_{i+1}$ will not be invertible unless we update all the blocks. Clearly $\TauTest_i$, $\SigmaTest_{i+1}$, $\Tau_i$, and $\Sigma_{i+1}$ are self-adjoint and positive semi-definite with $\TauTest_i$ and $\SigmaTest_{i+1}$ invertible.
The subsets $\iset S(i)$ and $\iset V(i+1)$ indicate those blocks of $\nextx$ and of $\nexty$ that are to be updated ``independently'' of the other variable. We will explain these subsets and the choice of $\Lambda_i$ in more detail in \cref{sec:lambda}.

The iterate $\nextu=(\nextx, \nexty)$ has to be computable based on random sampling at iteration $i$ and the information gathered (random variable realisations) before commencing the iteration. For the algorithm to be realisable, it cannot depend on the future. We therefore need to be explicit about the space of each random variable. We model the information available just before commencing iteration $i$ by the $\sigma$-algebra $\SAlg_{i-1}$. Thus $\SAlg_{i-1} \subset \SAlg_{i}$. More precisely, $\SAlg_i$ is the smallest sub-$\sigma$-algebra of $\SAlg$ satisfying for all $k=0,\ldots,i$, $j=1,\ldots,m$, and $\ell=1,\ldots,n$ that
\begin{taggedsubequations}{$\Random$}%
\label{eq:sigma-algebra-cond}
\begin{align}%
    \tau_{j,k},\, & \sigma_{\ell,k+1} \in \Random(\SAlg_i; [0, \infty)), &
    \tauTest_{j,k},\, & \sigmaTest_{\ell,k+1} \in \Random(\SAlg_i; (0, \infty)), \\
    S(k) &\in \Random(\SAlg_i; \powerset(\{1,\ldots,m\})), &
    V(k+1) &\in \Random(\SAlg_i; \powerset(\{1,\ldots,n\})).
    \\
    \iset S(k) &\in \Random(\SAlg_i; \powerset(\{1,\ldots,m\})),
    \quad\text{and}&
    \iset V(k+1) &\in \Random(\SAlg_i; \powerset(\{1,\ldots,n\})).
\end{align}%
\end{taggedsubequations}%
Here and only here $\powerset$ denotes the power set.
Any other variables can only be random by being constructed from these variables.
We thus deduce from \eqref{eq:main-structure}, and \eqref{eq:pp} that
\begin{align*}
    \Tau_k & \in \Random(\SAlg_i; \linear(X; X)),
    &
    \TauTest_k & \in \Random(\SAlg_i; \linear(X; X))
    &
    \nextx &\in \Random(\SAlg_i; X),
    \\
    \Sigma_{k+1} & \in \Random(\SAlg_i; \linear(Y; Y)),
    &
    \SigmaTest_{k+1} & \in \Random(\SAlg_i; \linear(Y; Y)),
    \quad\text{and}
    &
    \nexty & \in \Random(\SAlg_i; Y).
\end{align*}

We will also need to assume the \term{nesting conditions} on sampling,
\begin{taggedsubequations}{$\Neigh$}%
\label{eq:nesting}%
\begin{gather}%
    \label{eq:nesting-iset-s-v-cond}
    \Neigh(\iset S(i)) \isect \iset V(i+1)=\emptyset,
    \quad
    \Neigh(S(i) \setminus \iset S(i)) \isect (V(i+1) \setminus \iset V(i+1))=\emptyset,
    \\
    \label{eq:nesting-s-v-cond}
    \iset S(i) \union \inv\Neigh(\iset V(i+1)) \subset S(i) ,
    \quad\text{and}\quad
    \iset V(i+1) \union \Neigh(\iset S(i)) \subset V(i+1),
\end{gather}%
\end{taggedsubequations}%
where the set
\begin{equation}
    \label{eq:neigh}
    \Neigh(j) \defeq \{ \ell \in \{1,\ldots,n\} \mid Q_\ell KP_j \ne 0 \}.
\end{equation}
consists of the dual blocks that are ``connected'' by $K$ to the primal block with index $j$. Vice versa, $\inv \Neigh(\ell)$ consists of the primal blocks that are ``connected'' by $K$ to the dual block with index $\ell$.
Thus \eqref{eq:nesting-s-v-cond} states that the independent updates ($\iset S(i)$ and $\iset V(i+1)$) must propagate from primal to dual and vice versa as non-independent updates ($S(i)$ and $V(i+1)$).
The condition \eqref{eq:nesting-iset-s-v-cond} restricts connections between primal and dual updates: the first part says that the independently updates blocks cannot be connected. By the second part neither can non-independent updates. If we use \eqref{eq:nesting-s-v-cond} as an equality to define $S(i)$ and $V(i+1)$, then the second part of  \eqref{eq:nesting-iset-s-v-cond} holds if $\Neigh(\inv\Neigh(\iset V(i+1))) \isect \Neigh(\iset S(i))=\emptyset$, that is, the condition restricts second-degree connections between the independently updated blocks.

To facilitate referring to all the above structural conditions, we introduce:

\begin{assumption}[main structural condition]
    \label{ass:structural}
    We assume the structure \eqref{eq:g-fstar-separable} and \eqref{eq:main-structure} with the the limitations \eqref{eq:sigma-algebra-cond} and \eqref{eq:nesting} on randomness.
\end{assumption}

Clearly
\begin{equation}
  \label{eq:zm}
  \Test_{i+1} \Precond_{i+1} =
  \begin{pmatrix}
      \TauTest_i
      &
      -\Lambda_{i}^*
      \\
      -\Lambda_{i}
      &
      \SigmaTest_{i+1}
  \end{pmatrix}
\end{equation}
is self-adjoint. We need to prove that it is positive semi-definite. We will do this in \cref{sec:block-continued} using the functions $\kappa_\ell$ introduced next. As we show afterwards in \cref{ex:kappa-max}, these functions are a block structure adapted generalisation of the simple bound $K \le \norm{K} I$.

\begin{definition}
    \label{def:kappa}
    Let $\mathcal{P} \defeq \{P_1,\ldots,P_m\}$, and $\mathcal{Q} \defeq \{Q_1,\ldots,Q_n\}$. We write $(\kappa_1,\ldots,\kappa_n) \in \kappafamily$ if each $\kappa_\ell: [0,\infty)^m \to [0,\infty)$ is monotone ($\ell = 1,\ldots,n$) and the following hold:
    \begin{enumerate}[label=(\roman*)]
    \begin{taggedsubequations}{{\TAGSTRUCT}$\kappa$}
    \label{eq:kappa}
    \item\label{item:kappa-estimate} (Estimation) For all $(z_{\ell,1},\ldots,z_{\ell,m}) \subset [0, \infty)^m$ and $\ell=1,\ldots,n$ the estimate
    \begin{equation*}
        \sum_{j=1}^m \sum_{\ell,k=1}^n z_{\ell,j}^{1/2} z_{k,j}^{1/2} Q_\ell K P_j K^* Q_k
        \le
        \sum_{\ell=1}^n
        \kappa_\ell(z_{\ell,1},\ldots,z_{\ell,m}) Q_\ell.
    \end{equation*}
    \item\label{item:kappa-bounded} (Boundedness) For some $\MAX \kappa>0$ and all $(z_{1},\ldots,z_{m}) \subset [0, \infty)^m$ the bound
    \begin{equation*}
        \kappa_\ell(z_1,\ldots,z_m) \le \MAX \kappa \sum_{j=1}^m z_j.
    \end{equation*}
    \item\label{item:kappa-nondegeneracy} (Non-degeneracy) There exists $\MIN\kappa>0$ and for all $j=1,\ldots,m$) a choice of $\ell^*(j) \in \{1,\ldots,n\}$  such that for all $(z_{1},\ldots,z_{m}) \subset [0, \infty)^m$,
    \begin{equation*}
        \MIN\kappa z_{j} \le \kappa_{\ell^*(j)}(z_1,\ldots,z_m)
        \quad (j = 1,\ldots,m).
    \end{equation*}
    \end{taggedsubequations}
    \end{enumerate}
\end{definition}

The choice of $\kappa$ allows us to construct different algorithms. 
Here we consider a few possibilities, first an easy one, and then a more optimal one.

\begin{example}[Worst-case $\kappa$]
    \label{ex:kappa-max}
    We may estimate
    \begin{equation}
        \notag
        \sum_{j=1}^m 
        \sum_{\ell,k=1}^n
        z_{\ell,j}^{1/2}z_{k,j}^{1/2} Q_\ell K P_j K^* Q_k
        \le
        \sum_{\ell,k=1}^n
        \MAX z_{\ell}^{1/2}\MAX z_{k}^{1/2} Q_\ell K K^* Q_k
        \le
        \sum_{\ell=1}^n
        \MAX z_{\ell} \norm{K}^2 Q_\ell.
    \end{equation}    
    Therefore \cref{def:kappa}\,\ref{item:kappa-estimate} and \ref{item:kappa-bounded} hold with $\MAX\kappa=\norm{K}^2$ for the monotone choice
    \[
        \kappa_\ell(z_1,\ldots,z_m) \defeq \norm{K}^2 \max\{z_1,\ldots,z_m\}.
    \]
    Clearly also $\MIN\kappa = \MAX\kappa$ for any choice of $\ell^*(j) \in \{1,\ldots,n\}$.
    This choice of $\kappa_\ell$ corresponds to the rule $\tau\sigma\norm{K}^2 < 1$ in the PDHGM method.
\end{example}

\begin{example}[Balanced $\kappa$]
    \label{ex:kappa-refined}
    Take minimal $\kappa_\ell$ satisfying \cref{def:kappa} and the balancing condition
    $
        \kappa_\ell(z_{\ell,1},\ldots,z_{\ell,m})
        =
        \kappa_k(z_{k,1},\ldots,z_{k,m}),
        (\ell, k=1,\ldots,n).
    $
    This requires problem-specific analysis, but tends to perform well, as we will see in \cref{sec:numerical}.
\end{example}


\subsection{Justification of the preconditioner and sampling restrictions}
\label{sec:lambda}

With $\Precond_{i+1}$ of the form \eqref{eq:precond-lambda} for any $\Lambda_i$, the implicit method \eqref{eq:pp} expands as%
\begin{subequations}%
\label{eq:pp-expanded-lambda}
\begin{align}%
  \label{eq:pp-expanded-lambda-x}
  0 & \in \Tau_i \subdiff G(\nextx) + \Tau_i K^*\nexty + (\nextx-\thisx) -\inv\TauTest_i\Lambda_i^*(\nexty-\thisy), \quad\text{and} \\
  \label{eq:pp-expanded-lambda-y}
  0 & \in \Sigma_{i+1} \subdiff F^*(\nexty) -\Sigma_{i+1}K\nextx -\inv\SigmaTest_{i+1}\Lambda_i(\nextx-\thisx) + (\nexty-\thisy).
\end{align}%
\end{subequations}%
This can easily be rearranged as
\begin{subequations}
\label{eq:pp-general-as-algorithm}
\begin{align}
    \label{eq:pp-general-as-algorithm-x}
    \nextx & \defeq \inv{(I+\Tau_i\subdiff G)}(\thisx + \inv\TauTest_i \Lambda_i^*(\nexty-\thisy)-  \Tau_i K^*\nexty), \\
    \label{eq:pp-general-as-algorithm-y}
    \nexty & \defeq \inv{(I+\Sigma_{i+1}\subdiff F^*)}(\thisy + \inv\SigmaTest_{i+1} \Lambda_i(\nextx-\thisx) + \Sigma_{i+1}K\nextx).
\end{align}
\end{subequations}
This method is still not explicit as the primal and dual updates potentially depend on each other. We will now show how the removal of this dependency leads to our choice of $\Lambda_i$ in \eqref{eq:precond-lambda}.

Indeed, due to the compatible block-separable structures \eqref{eq:main-structure} and \eqref{eq:g-fstar-separable}, multiplying \eqref{eq:pp-expanded-lambda-x} by $P_j$ for $j=1,\ldots,m$, and \eqref{eq:pp-expanded-lambda-y} by $Q_\ell$ for $\ell=1,\ldots,n$, \eqref{eq:pp-general-as-algorithm} can be split blockwise as
\begin{align*}
    \nextx_j & = \inv{(I+\chi_{S(i)}(j)\tau_{j,i}\subdiff G_j)}(\thisx_j + P_j[\inv\TauTest_i \Lambda_i^*(\nexty-\thisy)-  \Tau_i K^*\nexty])
    \quad (j=1,\ldots,m),
    \\
    \nexty_\ell & = \inv{(I+\chi_{V(i+1)}(\ell)\sigma_{\ell,i+1}\subdiff F_\ell^*)}(\thisy_\ell + Q_\ell[\inv\SigmaTest_{i+1} \Lambda_i(\nextx-\thisx) + \Sigma_{i+1}K\nextx])
    \quad (\ell=1,\ldots,n).
\end{align*}
For $S(i)$ and $V(i+1)$ to have the intended meaning that only the corresponding blocks are updated, we need to choose $\Lambda_i$ such that
\begin{align}
    \label{eq:step-compatibility}
    \nextx_j&=\thisx_j\quad
    (j \not\in S(i)),
    \quad\text{and} & \nexty_\ell&=\thisy_\ell\quad (\ell \not\in V(i+1)).
\end{align}
Since $P_j\Tau_i=0$ for $j \not \in S(i)$, and $Q_\ell\Sigma_{i+1}=0$ for $\ell \not \in V(i+1)$, this is to say
\begin{subequations}
\label{eq:sanity-conditions}
\begin{align}
  \label{eq:sanity-conditions:primal}
  P_j
    \inv\TauTest_i \Lambda_i^*(\nexty-\thisy)
    &=0 && (j \not \in S(i)), \quad\text{and}
  \\
  \label{eq:sanity-conditions:dual}
  Q_\ell
    \inv\SigmaTest_{i+1} \Lambda_i(\nextx-\thisx)
    & = 0 && (\ell \not \in V(i+1)).
\end{align}
\end{subequations}

Taking $\Lambda_i=K\Tau_i^*\TauTest_i^*$ would allow computing $\nextx$ before $\nexty$ and satisfy \eqref{eq:sanity-conditions:primal}.
If we further had $K\Tau_i^*\TauTest_i^*=\omega_i\SigmaTest_{i+1}\Sigma_{i+1}K$, then also \eqref{eq:sanity-conditions:dual} would hold and \eqref{eq:precond-lambda} would reproduce $\Precond_{i+1}$ of \eqref{eq:test-precond-first}.
Symmetrically, $\Lambda_i=-\Sigma_{i+1}\SigmaTest_{i+1}K$ would make $\nexty$ independent of $\nextx$.
Such conditions will, however, rarely be satisfiable unless, deterministically, $S(i) \equiv \{1,\ldots,m\}$ and $V(i+1) \equiv \{1,\ldots,n\}$.
Nevertheless, motivated by this, we designate subsets of blocks of $\nextx$ and $\nexty$ to be updated independently of the other variable.
This are the subsets $\iset S(i)$ and $\iset V(i+1)$ in \eqref{eq:iset-tau} and  \eqref{eq:iset-sigma}. Then picking $\Lambda_i$ as in \eqref{eq:precond-lambda} achieves our objective:

\begin{lemma}
    \label{lemma:step-compatibility}
    Suppose \cref{ass:structural} holds. Then \eqref{eq:step-compatibility} and \eqref{eq:sanity-conditions} hold.
\end{lemma}

\begin{proof}
    We already know that \eqref{eq:sanity-conditions} implies\eqref{eq:step-compatibility}.
    Using \eqref{eq:precond-lambda},  \eqref{eq:sanity-conditions} can be rewritten
    \begin{align*}
      P_j
        \inv\TauTest_i[ \TauTest_i\iset\Tau_iK^*  - K^*\iset \Sigma_{i+1}^*\SigmaTest_{i+1}^*](\nexty-\thisy)
        &=0 && (j \not \in S(i)), \quad\text{and}
      \\
      Q_\ell
        \inv\SigmaTest_{i+1}[ K\iset\Tau_i^*\TauTest_i^*  - \SigmaTest_{i+1}\iset \Sigma_{i+1} K](\nextx-\thisx)
        & = 0 && (\ell \not \in V(i+1)).
    \end{align*}
    Clearly $P_j\iset\Tau_iK^*$ for $j \not \in S(i)$.
    Therefore, the first condition holds if $P_j K^*\iset \Sigma_{i+1}^*=0$ for $j \not\in S(i)$. This is to say $j \not \in \inv\Neigh(\iset V(i+1))$, which is guaranteed by \eqref{eq:nesting-s-v-cond}. Likewise, the second condition holds if $Q_\ell K\iset \Tau_i^*=0$ for $\ell \not\in V(i+1)$, which is also guaranteed by \eqref{eq:nesting-s-v-cond}.
\end{proof}

\subsection{Overall structure of the proposed method}

Setting $\Tau_i^\perp \defeq \Tau_i - \iset \Tau_i$, and $\Sigma_{i+1}^\perp \defeq \Sigma_{i+1} - \iset \Sigma_{i+1}$, we can now rewrite \eqref{eq:pp-general-as-algorithm} as
\begin{subequations}
\label{eq:cpoper-lambda-solved0}
\begin{align}
    \label{eq:cpoper-lambda-solved0-v}
    \nexxt{q} & \defeq \inv\TauTest_i K^* \iset \Sigma_{i+1}^* \SigmaTest_{i+1}^*(\nexty-\thisy)+\Tau_i^\perp K^*\nexty,
    \\
    \label{eq:cpoper-lambda-solved0-x}
    \nextx & \defeq \inv{(I+\Tau_i\subdiff G)}(\thisx - \iset \Tau_i K^*\thisy - \nexxt{q}),
    \\
    \label{eq:cpoper-lambda-solved0-z}
    \nexxt{z} & \defeq \inv\SigmaTest_{i+1}K\iset \Tau_i^*\TauTest_i^*(\nextx-\thisx)+\Sigma_{i+1}^\perp K\nextx,
    \\
    \label{eq:cpoper-lambda-solved0-y}
    \nexty & \defeq \inv{(I+\Sigma_{i+1}\subdiff F^*)}(\thisy+\iset \Sigma_{i+1}K\thisx+\nexxt{z}).
\end{align}
\end{subequations}
Due to the first part of \eqref{eq:nesting-iset-s-v-cond}, $P_j \nexxt{q}=0$ and $Q_\ell \nexxt{z}=0$ for $j \in \iset S(i)$ and $\ell \in \iset V(i+1)$.
The second part of \eqref{eq:nesting-iset-s-v-cond} moreover implies $\Tau_i^\perp K^*Q_\ell=0$ and $\Sigma_{i+1}^\perp KP_j$ for $\ell \in V(i+1) \setminus \iset V(i+1)$ and $j \in S(i) \setminus \iset S(i)$.
The first part of \eqref{eq:nesting-iset-s-v-cond} and \eqref{eq:nesting-s-v-cond} moreover imply $\Sigma_{i+1}^\perp\Theta_{i+1}=\Sigma_{i+1}\Theta_{i+1}=\inv\SigmaTest_{i+1} K \iset \Tau_i^*\TauTest_i^*$ and $\Tau_i^\perp B_{i+1}^*=\Tau_i B_{i+1}^*=\inv\TauTest_i K^* \iset \Sigma_{i+1} \SigmaTest_{i+1}$ for%
\begin{align*}
    \Theta_{i}
    &
    \defeq
    \sum_{j \in \iset S(i)} \sum_{\ell \in \Neigh(j)} \theta_{\ell,j,i} Q_\ell KP_{j}
    &&\text{with}\quad
    \theta_{\ell,j,i+1}
    \defeq
    \frac{\tau_{j,i}\tauTest_{j,i}}{\sigma_{\ell,i+1}\sigmaTest_{\ell,i+1}}
    \quad\text{and}
    \\
    B_{i}
    &
    \defeq
    \sum_{\ell \in \iset V(i+1)} \sum_{j \in \inv\Neigh(\ell)} b_{\ell,j,i} Q_\ell KP_{j}
    &&\text{with}\quad
    b_{\ell,j,i+1}
    \defeq
    \frac{\sigma_{\ell,i+1}\sigmaTest_{\ell,i+1}}{\tau_{j,i}\tauTest_{j,i}},
\end{align*}
Setting $\nexxt{\iset x} \defeq \sum_{j \in \iset S(i)} P_j \nextx$ and $\nexxt{\iset y} \defeq \sum_{\ell \in \iset V(i+1)} Q_\ell \nextx$ we therefore obtain
\begin{subequations}%
\label{eq:cpoper-lambda-solved1}%
\begin{align}%
   \nexxt{q} & \defeq \inv\TauTest_i K^* \iset \Sigma_{i+1}^* \SigmaTest_{i+1}^*(\nexxt{\iset y}-\thisy)+\Tau_i^\perp K^*\nexxt{\iset y}
   =\Tau_i^\perp[B_{i+1}^*(\nexxt{\iset y}-\thisy)+\nexxt{\iset y}]
   \quad\text{and}
   \\
   \nexxt{z} & \defeq \inv\SigmaTest_{i+1}K\iset \Tau_i^*\TauTest_i^*(\nexxt{\iset x}-\thisx)+\Sigma_{i+1}^\perp K\nexxt{\iset x}
   =\Sigma_{i+1}^\perp[\Theta_{i+1}(\nexxt{\iset x}-\thisx)+\nexxt{\iset x}].
\end{align}%
\end{subequations}%

Introducing $\nexxt{w}$ and $\nexxt{v}$ such that $\SigmaTest_{i+1}^\perp\nexxt{w}=\nexxt{z}$ and $\TauTest_{i}^\perp\nexxt{v}=\nexxt{q}$, and using \eqref{eq:g-fstar-separable}, we can write the method given by \eqref{eq:cpoper-lambda-solved0} and \eqref{eq:cpoper-lambda-solved1} as%
\begin{subequations}%
\label{eq:cpoper-theta}%
\begin{align}%
    \nexxt{\iset x} & \defeq \inv{(I+\iset \Tau_i\subdiff G)}(\thisx - \iset \Tau_i K^*\thisy), \\
    \nexxt{\iset y} & \defeq \inv{(I+\iset \Sigma_{i+1}\subdiff F^*)}(\thisy+\iset \Sigma_{i+1}K\thisx), \\
    \nexxt{w} & \defeq \Theta_{i+1}(\nexxt{\iset x}-\thisx)+\nexxt{\iset x}, \\
    \nexxt{v} & \defeq B_{i+1}^*(\nexxt{\iset y}-\thisy)+\nexxt{\iset y}, \\
    \nextx & \defeq \inv{(I+\Tau_i^\perp\subdiff G)}(\nexxt{\iset x}-\Tau_i^\perp \nexxt{v}), \\
    \nexty & \defeq \inv{(I+\Sigma_{i+1}^\perp\subdiff F^*)}(\nexxt{\iset y}+\Sigma_{i+1}^\perp\nexxt{w}).
\end{align}%
\end{subequations}%
Due to \eqref{eq:g-fstar-separable}, these operations can further be split into blockwise operations with no dependencies on so far uncomputed blocks. We delay doing this explicitly until we are ready to present our final \cref{alg:alg-blockcp,alg:alg-blockcp-fulldual} towards the end of the theoretical part of the paper.

We conclude the present structural development by explicitly stating what we have proved in the preceding paragraphs:

\begin{lemma}
    \label{lemma:pp-equiv}
    Suppose \cref{ass:structural} holds.
    Then \eqref{eq:cpoper-theta} is equivalent to \eqref{eq:pp}.
\end{lemma}

\section{A basic convergence estimate}
\label{sec:block}

Now that we have the overall structure of the proposed algorithms established in \eqref{eq:cpoper-theta}, we need to develop rules for the step length and testing parameters that yield a convergent method. This will require, in particular, the positive semi-definiteness of $\Test_{i+1}\Precond_{i+1}$, as we recall from the discussion leading up to \eqref{eq:basic-convergence-estimate}. Indeed, since in general, without any strong convexity, we can only obtain gap (and weak) convergence, we need to refine that argument.

In \crefrange{sec:gap}{sec:gap-satisfaction}, we will derive some quite technical conditions that the step length parameters, testing parameters, and block selection probabilities need to satisfy. From these basic estimates, we then develop explicit convergence rates in the next section.
In the final \cref{sec:sampling}, we will also discuss permissible sampling patterns.

\subsection{A bound on ergodic duality gaps}
\label{sec:gap}

Recall the basis of the testing technique, \eqref{eq:testing0}. In the single-block case ($\Tau_i=\tau_i I$, $\Sigma_{i+1}=\sigma_{i+1} I$, $\TauTest_{i+1}=\tauTest_{i+1} I$, and $\SigmaTest_{i+1}=\sigmaTest_{i+1} I$), instead of using $\realoptu \in \inv H(0)$ and the operator-relative monotonicity \eqref{eq:xi-monotonicity} to eliminate $H$, using the convexity of $G$ and $F^*$ we can also estimate
\begin{equation}
    \label{eq:h-gap0}
    \begin{split}
    \iprod{H(\nextu)}{\nextu-\realoptu}_{\Test_{i+1}\Step_{i+1}}
    &
    \ge
    \tauTest_i\tau_i[G(\nextx)-G(\realoptx)]
    +\sigmaTest_{i+1}\sigma_{i+1}[F^*(\nexty)-F^*(\realopty)]
    \\\MoveEqLeft[-1]
    +\tauTest_i\tau_i\iprod{K^*\nexty}{\nextx-\realoptx}
    -\sigmaTest_{i+1}\sigma_i\iprod{K\nextx}{\nexty-\realopty} =: \tilde\gap_{i+1}.
    \end{split}
\end{equation}
With this, \eqref{eq:basic-convergence-estimate} can be improved to
\begin{equation*}
  \frac{1}{2}\norm{u^N-\realoptu}_{\Test_{N+1}\Precond_{N+1}}^2
  +
  \sum_{i=0}^{N-1}\left(\tilde\gap_{i+1} + \frac{1}{2}\norm{\nextu-\thisu}_{\Test_{i+1}\Precond_{i+1}}^2\right)
  \le
  \frac{1}{2}\norm{u^0-\realoptu}_{\Test_{1}\Precond_{1}}^2.
\end{equation*}

We would like to develop the ``preliminary gaps'' $\tilde\gap_{i+1}$ into a (Lagrangian) duality gap
\begin{equation}
    \label{eq:gap}
    \gap(x, y) \defeq
    \bigl(G(x) + \iprod{\realopty}{K x}  - F^*(\realopty)\bigr)
    -\bigl(G(\realoptx) + \iprod{y}{K \realoptx} -  F^*(y)\bigr).
\end{equation}
The first obstacle we face are the differing factors in front of $G$ and $F^*$. This suggests to impose $\tauTest_i\tau_i=\sigmaTest_{i+1}\sigma_{i+1}$.
For the PDHGM, it however turns out that $\tauTest_i\tau_i=\sigmaTest_i\sigma_i$.
After taking care of some technical details, this can be dealt with by an index realignment argument \cite{tuomov-proxtest}.

With multiple blocks, we can get a similar estimate as \eqref{eq:h-gap0} with factor the $\tauTest_{j,i}\tau_{j,i}$ in front of $G_j$ and $\sigmaTest_{\ell,i+1}\sigma_{\ell,i+1}$ in front of $F^*_\ell$. To derive a gap estimate, the preceding discussion suggests to impose $\Tau_i\TauTest_i=\bar\eta_i I$ and $\Sigma_{i+1}\SigmaTest_{i+1}=\bar\eta_i I$ or $\Sigma_i\SigmaTest_i=\bar\eta_i I$ for some scalar $\bar\eta_i>0$.
This kind of \emph{coupling} between the blocks will be one of the main restrictions that we face in the development our method.
In the stochastic setting, it turns out \cite{tuomov-proxtest} that we can relax the coupling slightly: do it in expectation. Correspondingly, we assume for some $\bar\eta_i > 0$ either
\begin{subequations}%
\label{eq:eta-conds}
\begin{align}%
    \label{eq:cond-eta}
    \E[\Tau_i^*\TauTest_i^*] & = \bar\eta_i I, 
    \quad\text{and} &
    \E[\SigmaTest_{i+1}\Sigma_{i+1}] & =\bar\eta_i I,
    \quad(i \ge 1),
    \qquad\text{or}
    \\
    \label{eq:cond-etatwo}
    \E[\Tau_i^*\TauTest_i^*] & = \bar\eta_i I, 
    \quad\text{and} &
    \E[\SigmaTest_{i}\Sigma_{i}] &=\bar\eta_i I,
    \quad(i \ge 1).
\end{align}%
\end{subequations}%
The second condition is an extension of what we saw the standard PDHGM to satisfy.
The first condition, which is off-by-one compared to the second, will, however, be the only alternative that doubly-stochastic methods can satisfy.

A further difficulty with developing  \eqref{eq:h-gap0} into a gap estimate are the remaining terms involving $K$. Even after rearrangements we can only get an ergodic estimate \cite{tuomov-proxtest}.
To express such estimates, corresponding to the conditions \eqref{eq:cond-eta} and \eqref{eq:cond-etatwo}, we introduce
\begin{equation}
    \label{eq:zeta}
    \zeta_N \defeq \sum_{i=0}^{N-1} \bar\eta_i
    \quad\text{and}\quad
    \zeta_{*,N} \defeq \sum_{i=1}^{N-1} \bar\eta_i,
\end{equation}
and the ergodic sequences
\begin{align}
    \label{eq:tildexnyn}
    \tilde x_{N} & \defeq \inv\zeta_{N}\E\Biggl[\sum_{i=0}^{N-1} \Tau_i^*\TauTest_i^* \nextx\Biggr],
    &
    \tilde y_{N} & \defeq \inv\zeta_{N}\E\Biggl[\sum_{i=0}^{N-1} \Sigma_{i+1}^*\SigmaTest_{i+1}^* \nexty\Biggr],
    \\
    \label{eq:tildexnyntwo}
    \tilde x_{*,N} & \defeq \inv\zeta_{*,N}\E\Biggl[\sum_{i=1}^{N-1} \Tau_i^*\TauTest_i^* \nextx\Biggr],
     &
    \tilde y_{*,N} & \defeq \inv\zeta_{*,N}\E\Biggl[\sum_{i=1}^{N-1} \Sigma_i^*\SigmaTest_i^* \thisy\Biggr].
\end{align}
The coupling conditions \eqref{eq:cond-eta} and \eqref{eq:cond-etatwo} then produce two different ergodic gaps, $\gap(\tilde x_N, \tilde y_N)$ and $\gap(\tilde x_{*,N}, \tilde y_{*,N})$.
We demonstrate this in the next theorem from \cite{tuomov-proxtest}. It forms the basis for our work in the remaining sections.
The fundamental arguments for the proof are those that led to \eqref{eq:basic-convergence-estimate}, however, the gap estimate requires significant additional technical work.

\begin{theorem}
    \label{cor:convergence-result-stochastic}
    Suppose \cref{ass:structural} (main structural condition) holds with $\Test_{i+1}\Precond_{i+1}$ positive semi-definite. Write $\Gamma \defeq \sum_{j=1}^m \gamma_j P_j$ for $\gamma_j \ge 0$ the factor of (strong) convexity of $G_j$.
    With $\tilde\Gamma=\sum_{j=1}^m \tilde\gamma_j P_j \in \linear(X; X)$, assuming one of the following alternatives to hold, let
    \begin{equation}
        \label{eq:tildegn}
        \tilde g_N \defeq
        \begin{cases}
             0, & 0 \le \tilde\Gamma \le \Gamma, \\
             \zeta_N \gap(\tilde x_N, \tilde y_N), & 0 \le \tilde\Gamma \le \Gamma/2; \text{ \eqref{eq:cond-eta} holds}, \\
             \zeta_{*,N}\gap(\tilde x_{*,N}, \tilde y_{*,N}), & 0 \le \tilde\Gamma \le \Gamma/2; \text{ \eqref{eq:cond-etatwo} holds}.
        \end{cases}
    \end{equation}
    Also define
    \begin{align*}
        \GammaLift{i+1}(\tilde\Gamma)& \defeq 
        \begin{pmatrix}
            2\Tau_i\tilde\Gamma
            & 2\Tau_i K^*
            \\
            - 2\Sigma_{i+1} K
            & 0
        \end{pmatrix}
        \quad\text{and}&
        \MetricDiff_{i+1}(\tilde\Gamma)
        &\defeq
        \Test_{i+2}\Precond_{i+2}-\Test_{i+1}(\GammaLift{i+1}(\tilde\Gamma)+\Precond_{i+1}).
    \end{align*}
    Then the iterates $\thisu=(\thisx, \thisy)$ of \eqref{eq:pp} satisfy for any $\realoptu \in \inv H(0)$ the estimate
    \begin{align}
        \label{eq:convergence-result-main-stochastic}
        \frac{1}{2}\E\bigl[
            \norm{u^N-\realoptu}_{\Test_N\Precond_N}^2\bigr]
        + \tilde g_N
        &
        \le
        \frac{1}{2}
        \norm{u^0-\realoptu}_{\Test_1 \Precond_1}^2
        \\ \MoveEqLeft[-1] \notag
        + \sum_{i=0}^{N-1}
        \frac{1}{2}\E\bigl[
            \norm{\nextu-\realoptu}_{\MetricDiff_{i+1}(\tilde\Gamma)}^2
            -\norm{\nextu-\thisu}_{\Test_{i+1}\Precond_{i+1}}^2
        \bigr].
    \end{align}
\end{theorem}

\begin{proof}
    This is \cite[Theorem 5.5]{tuomov-proxtest} with $\Penalty_{i+1}(\tilde\Gamma) \defeq \frac{1}{2}\norm{\nextu-\realoptu}_{\MetricDiff_{i+1}(\tilde\Gamma)}^2-\frac{1}{2}\norm{\nextu-\thisu}_{\Test_{i+1}\Precond_{i+1}}^2$ and the condition $\tilde\Gamma=\Gamma$ relaxed to $0 \le \tilde\Gamma \le \Gamma$, which is possible because if $g_j$ is strongly convex with factor $\gamma_j>0$, it is strongly convex with any smaller non-negative factor.
    Moreover, \cite[Example 5.1]{tuomov-proxtest} shows that the blockwise structure \eqref{eq:g-fstar-separable}, \eqref{eq:main-structure} has an ergodic convexity property that produces the gaps $\gap(\tilde x_N, \tilde y_N)$ and $\gap(\tilde x_{*,N}, \tilde y_{*,N})$
\end{proof}

 In the standard PDHGM we can ensure $\MetricDiff_{i+1}(\tilde\Gamma) \simeq 0$ \cite{tuomov-proxtest}. However, in our present setting, we will not generally be able to enforce this, so these operators will introduce a penalty in \eqref{eq:convergence-result-main-stochastic}. A lot of our remaining work will consist of controlling this penalty. We also need to estimate from below and show that $\Test_{N}\Precond_{N}$ is positive semi-definite.

\subsection{Notations and assumptions}

For convenience, we introduce
\[
    \begin{aligned}
    \hat\tau_{j,i} & \defeq \tau_{j,i}\chi_{S(i)}(j),
    &
    \hat\sigma_{\ell,i} &\defeq \sigma_{\ell,i}\chi_{V(i)}(\ell),
    \\
    \pi_{j,i} & \defeq \P[j \in S(i) \mid \SAlg_{i-1}],
    &
    \nu_{\ell,i+1} & \defeq \P[\ell \in V(i+1) \mid \SAlg_{i-1}],
    \\
    \iset\pi_{j,i} & \defeq \P[j \in \iset S(i) \mid \SAlg_{i-1}],
    \quad\text{and} 
    &
    \iset\nu_{\ell,i+1} & \defeq \P[\ell \in \iset V(i+1) \mid \SAlg_{i-1}].
    \end{aligned}
\]
The first two denote ``effective'' step lengths on iteration $i$, while the rest are shorthands for the probabilities of the primal block $j$ or the dual block $\ell$ being contained in the corresponding set on iteration $i$.
Recalling \eqref{eq:precond-lambda}, we also write
\begin{align}
    \label{eq:lambda-choice}
    \Lambda_{i} & =  \sum_{j=1}^m \sum_{\ell \in \Neigh(j)} \lambda_{\ell,j,i} Q_\ell KP_{j}
    \quad\text{with}
    &
    \lambda_{\ell,j,i}
    & \defeq
        \tauTest_{j,i}\hat\tau_{j,i}\chi_{\iset S(i)}(j)
        -\sigmaTest_{\ell,i+1}\hat\sigma_{\ell,i+1}\chi_{\iset V(i+1)}(\ell).
\end{align}

We require the following technical assumption, which we will verify through explicit step length and testing parameter update rule development in the next section. We indicate the rough intended use of each condition in parentheses after the statement.

\begin{assumption}[step length parameter restrictions]
    \label{ass:step}
    We assume for each $i \in \N$ the following, constants independent of $i$, and  the \emph{same alternatives} holding for each $i$:
    \begin{enumerate}[label=(\alph*)]
        \item\label{item:sigmatest-choice-1}
        We are given $(\kappa_1,\ldots,\kappa_n) \in \kappafamily$ (see \cref{def:kappa}), and for some $\delta \in (0,1)$,
        \begin{equation*}
            (1-\delta)\sigmaTest_{\ell,i+1}
            \ge
            \kappa_\ell(\lambda_{\ell,1,i}^2\inv\tauTest_{1,i}, \ldots, \lambda_{\ell,m,i}^2\inv\tauTest_{m,i}) 
            \quad (\ell=1,\ldots,n).
        \end{equation*}
        (This condition generalises the condition $\tau\sigma\norm{K}^2 < 1$ for the standard PDHGM, needed to ensure the positivity of the local metric $\Test_{i+1}\Precond_{i+1}$.)

        \item\label{item:eta-relationships}
        We are given $\eta_i \in \Random(\SAlg_{i-1}; (0, \infty))$ and $\eta_{\tau,i}^\perp, \eta_{\sigma,i}^\perp \in \Random(\SAlg_{i-1}; [0, \infty))$ such that $\eta_{i+1} \ge \eta_i$,
        \begin{equation*}
            \eta_i \cdot \min_j(\pi_{j,i}-\iset \pi_{j,i}) \ge \eta_{\tau,i}^\perp,
            \quad\text{and}\quad
            \eta_{i+1} \cdot \min_\ell (\nu_{\ell,i+1}-\iset \nu_{\ell,i+1}) \ge \eta_{\sigma,i}^\perp.
        \end{equation*}
        (This is needed to annihilate the off-diagonal of $\MetricDiff_{i+1}$ in the penalty term.)

        \item\label{item:eta-options}
        Either
        \begin{enumerate}[label=(c-\roman*),widest={(c-ii)},leftmargin=*]
            \item \label{item:eta-doubly}
            $\E[\eta_{\tau,i}^\perp-\eta_{\sigma,i}^\perp]=\text{constant}$; or

            \item\label{item:eta-singly}
            $\eta_{\tau,i}^\perp=0$ and $\eta_{\sigma,i}^\perp = \eta_{i+1}$.
        \end{enumerate}
        (These are needed to ensure the coupling conditions \eqref{eq:cond-eta} or \eqref{eq:cond-etatwo}, respectively.)

        \item\label{item:step-update-formulas}
        The step lengths parameters
        \begin{subequations}
        \label{eq:step-update-formulas}
        \begin{align}
            \label{eq:step-update-formulas-x}
            \tau_{j,i} & =
            \begin{cases}
                \frac{\eta_{i} - \tauTest_{j,i-1}\tau_{j,i-1}\chi_{S(i-1) \setminus \iset S(i-1)}(j)}{\tauTest_{j,i}\iset \pi_{j,i}},
                &
                j \in \iset S(i), 
                \\
                \frac{\eta_{\tau,i}^\perp}{\tauTest_{j,i}(\pi_{j,i}-\iset \pi_{j,i})}, 
                &
                j \in S(i) \setminus \iset S(i),
            \end{cases}
            \\
            \label{eq:step-update-formulas-y}
            \sigma_{j,i+1} & =
            \begin{cases}
                \frac{\eta_{i} - \sigmaTest_{j,i}\sigma_{j,i}\chi_{V(i) \setminus \iset V(i)}(j)}{\sigmaTest_{j,i+1}\iset \nu_{\ell,i+1}},
                &
                j \in \iset V(i+1),
                \\
                \frac{\eta_{\sigma,i}^\perp}{\sigmaTest_{j,i+1}(\nu_{\ell,i+1}-\iset\nu_{\ell,i+1})},
                &
                j \in V(i+1) \setminus \iset V(i+1).
            \end{cases}
        \end{align}%
        \end{subequations}%
        For $i=0$ we take $\tau_{j,-1} \defeq 0$ and $\sigma_{j,0} \defeq 0$.\\
        (This rule is also needed to annihilate the off-diagonal of $\MetricDiff_{i+1}$ in the penalty term.)

        \item\label{item:c2-x-primeprime}
        Let $\gamma_j \ge 0$ by the factor of (strong) convexity of $G_j$, and  $\tilde\gamma_j \in [0,\gamma_j]$, ($j=1,\ldots,m$). Also let $\alpha_i>0$ and define%
        \begin{subequations}%
        \begin{align}%
            \label{eq:q-j}
            q_{j,i+2}(\tilde\gamma_j)
            & \defeq
            \bigl(
            \E[\tauTest_{j,i+1} - \tauTest_{j,i}(1 + 2\hat\tau_{j,i}\tilde\gamma_j)|\SAlg_i]
            \\
            \notag
            &
            \phantom { = = }
            +
            \alpha_i\abs{\E[\tauTest_{j,i+1} - \tauTest_{j,i}(1 + 2\hat\tau_{j,i}\tilde\gamma_j)|\SAlg_i]}
            -\delta\tauTest_{j,i}\bigr)\chi_{S(i)}(j),
            \quad\text{and}
            \\
            \label{eq:h-j}
            h_{j,i+2}(\tilde\gamma_j)
            & \defeq
            \E[\tauTest_{j,i+1} - \tauTest_{j,i}(1 + 2\hat\tau_{j,i}\tilde\gamma_j)|\SAlg_{i-1}]
            \\
            \notag
            &
            \phantom{ = }
            +\inv\alpha_i\abs{\E[\tauTest_{j,i+1} - \tauTest_{j,i}(1 + 2\hat\tau_{j,i}\tilde\gamma_j)|\SAlg_i]}.
        \end{align}%
        \end{subequations}%
        Then for some $C_x>0$ either
        \begin{subequations}
        \label{eq:c2-x-primeprime}
        \begin{align}
            \label{eq:c2-x-primeprime-bound}
            \norm{\nextx_j-\realoptx_j}^2 & \le C_x
            && (j=1,\ldots,m)
            \quad\text{or}\quad
            \\
            \label{eq:c2-x-primeprime-zero}
            h_{j,i+2}(\tilde\gamma_j) & \le 0
            \quad\text{and}\quad
            q_{j,i+2}(\tilde\gamma_j) \le 0
            && (j=1,\ldots,m),
        \end{align}
        \end{subequations}
        (This is needed to bound the primal components in the penalty term.)

        \item\label{item:c2-y-primeprime}
        For some $C_y>0$ either
        \begin{subequations}
        \label{eq:c2-y-primeprime}
        \begin{align}
            \label{eq:c2-y-primeprime-bound}
            \E[\sigmaTest_{\ell,i+2}-\sigmaTest_{\ell,i+1}|\SAlg_i] & \ge 0,
            \quad
            \norm{\nexty_\ell-\realopty_\ell}^2 \le C_y
            && (\ell=1,\ldots,n)
            \quad\text{or}\quad
            \\
            \label{eq:c2-y-primeprime-zero} 
            \E[\sigmaTest_{\ell,i+2}-\sigmaTest_{\ell,i+1}|\SAlg_i] & =0 
            && (\ell=1,\ldots,n).
        \end{align}
        \end{subequations}%
        (This is needed to bound the dual components in the penalty term.)
    \end{enumerate}
\end{assumption}

It is important that \cref{ass:step} is consistent with \cref{ass:structural}, in particular that the step lengths generated by the former are non-negative. We will prove this in \cref{lemma:c2}. Before this, we start the main goal of the present section, the following specialisation of \cref{cor:convergence-result-stochastic}.

\begin{proposition}
    \label{prop:general-block-conditions}
    Suppose \cref{ass:structural} (main structural condition) and \cref{ass:step} (step length restrictions) hold.
    Then the iterates of \eqref{eq:pp} satisfy for any $\realoptu \in \inv H(0)$ the estimate%
    \begin{equation}
        \label{eq:convergence-result-stochastic-block}
        \sum_{j=1}^m \frac{\delta}{2\E[\inv\tauTest_{j,N}]} \cdot \E\bigl[\norm{x_j^N-\realoptx_j}\bigr]^2
        + \tilde g_N
        \le
        \frac{1}{2}\norm{u^0-\realoptu}_{\Test_0 \Precond_0}^2
        +
        \sum_{j=1}^m
        \frac{1}{2}d_{j,N}^x(\tilde\gamma_j)
        +
        \sum_{\ell=1}^n \frac{1}{2}d_{\ell,N}^y,
    \end{equation}
    where%
    \begin{subequations}%
    \label{eq:convergence-result-stochastic-block-parts}
    \begin{align}
        \label{eq:tildegn2}
        \tilde g_N &\defeq
        \begin{cases}
             \zeta_N \gap(\tilde x_N, \tilde y_N), & \text{case \ref{item:eta-doubly} and } \tilde\gamma_j \le \gamma_j/2 \text{ for all } j, \\
             \zeta_{*,N}\gap(\tilde x_{*,N}, \tilde y_{*,N}), & \text{case \ref{item:eta-singly} and } \tilde\gamma_j \le \gamma_j/2 \text{ for all } j, \\
             0, & \text{otherwise},
        \end{cases}
        \\
        d_{j,N}^x(\tilde\gamma_j) & \defeq
        \sum_{i=0}^{N-1} \boundfx_{j,i+2}(\tilde\gamma_j)
        \quad
        d_{\ell,N}^y \defeq \sum_{i=0}^{N-1} \boundfy_{\ell,i+2},
        \\
        \label{eq:bx-j-choice}
        \boundfx_{j,i+2}(\tilde\gamma_j)
        &
        \defeq
        4C_x \E[\max\{0, q_{j,i+2}(\tilde\gamma_j)\}]
        +
        C_x \E[\max\{0, h_{j,i+2}(\tilde\gamma_j)\}],
        \quad\text{and}
        \\
        \label{eq:by-ell-choice}
        \boundfy_{\ell,i+2}
        & \defeq
        9 C_y
        \E[\sigmaTest_{\ell,i+2}-\sigmaTest_{\ell,i+1}].
    \end{align}%
    \end{subequations}
\end{proposition}

\begin{proof}
    We use \cref{lemma:verif-eta} or \cref{lemma:verif-etatwo} (to follow) to verify one of the coupling conditions \eqref{eq:cond-eta} or \eqref{eq:cond-etatwo}. Then we obtain \eqref{eq:convergence-result-main-stochastic} from
    \cref{cor:convergence-result-stochastic}.
    Next, we use \cref{lemma:cond-u-to-x-prime-block,lemma:cond-delta-expectation-x-primeprime} (to follow) to estimate $\Test_{N+1}\Precond_{N+1} \ge \begin{psmallmatrix} \delta \TauTest_{N} & 0 \\ 0 & 0 \end{psmallmatrix}$ and
    \[
        \E\bigl[\norm{\nextu-\realoptu}_{\MetricDiff_{i+1}(\tilde\Gamma)}^2-\norm{\nextu-\thisu}_{\Test_{i+1}\Precond_{i+1}}^2\bigr]
        \le \sum_{j=1}^m \boundfx_{j,i+2}(\tilde\gamma_j) + \sum_{\ell=1}^n  \boundfy_{\ell,i+2}.
    \]
    Therefore \eqref{eq:convergence-result-main-stochastic} yields
    \begin{equation*}
        \frac{\delta}{2}\E\bigl[\norm{x^N-\realoptx}_{\TauTest_N}^2\bigr]
        +
        \tilde g_N
        \le
        \frac{1}{2}\norm{u^0-\realoptu}_{\Test_0 \Precond_0}^2
        +
        \frac{1}{2}\sum_{i=0}^{N-1}\biggl(\sum_{j=1}^m \boundfx_{j,i+2}(\tilde\gamma_j) + \sum_{\ell=1}^n  \boundfy_{\ell,i+2}\biggr).
    \end{equation*}
    By Hölder's inequality
    \[
        \E\bigl[\norm{x^N-\realoptx}_{\TauTest_N}^2\bigr]
        =
        \sum_{j=1}^m
        \E\bigl[
             \tauTest_{j,N} \norm{x_j^N-\realoptx_j}^2
        \bigr]
        \ge
        \sum_{j=1}^m
        \E\bigl[\norm{x_j^N-\realoptx_j}\bigr]^2 / \E[\inv\tauTest_{j,N}].
    \]
    The estimate \eqref{eq:convergence-result-stochastic-block} is now immediate.
\end{proof}

\subsection{Lower bound on the local metric}
\label{sec:stageone-c0}

\begin{lemma}
    \label{lemma:cond-u-to-x-prime-block}
    Suppose \cref{ass:structural} (main structural condition) and \cref{ass:step}\,\ref{item:sigmatest-choice-1} hold.
    Then $\Test_{i+1}\Precond_{i+1} \ge \begin{psmallmatrix} \delta \TauTest_i & 0 \\ 0 & 0 \end{psmallmatrix}$.
\end{lemma}
\begin{proof}
    Since $\TauTest_{i+1}$ is self-adjoint and positive definite, using \eqref{eq:zm} and Cauchy's inequality, for any $\delta \in (0, 1)$ we deduce
    \begin{equation}
        \label{eq:test-precond-first-estim}
        \Test_{i+1} \Precond_{i+1}
        =
        \begin{pmatrix}
            \TauTest_{i}
            &
            -\Lambda_{i}^*
            \\
            -\Lambda_{i}
            &
            \SigmaTest_{i+1}
        \end{pmatrix}
        \ge
        \begin{pmatrix}
            \delta \TauTest_{i}
            &
            0
            \\
            0
            &
            \SigmaTest_{i+1}
            - \frac{1}{1-\delta} \Lambda_{i}\inv\TauTest_{i}\Lambda_{i}^*
        \end{pmatrix}.
    \end{equation}
    We therefore require $(1-\delta)\SigmaTest_{i+1} \ge \Lambda_i\inv\TauTest_i\Lambda_i^*$, which can be expanded as
    \begin{equation}
        \label{eq:c1-prime-lambda-expanded1}
        (1-\delta)\sum_{\ell=1}^n \sigmaTest_{\ell,i+1} Q_\ell
        \ge
        \sum_{j=1}^m \sum_{\ell,k=1}^n \lambda_{\ell,j,i}\lambda_{k,j,i}\inv\tauTest_{j,i} Q_\ell K P_j K^* Q_k.
    \end{equation}
    This follows from \cref{def:kappa}\,\ref{item:kappa-estimate} with $z_{\ell,j} \defeq \lambda_{\ell,j,i}^2\inv\tauTest_{j,i}$.
\end{proof}

\subsection{Bounds on the penalty terms}
\label{sec:stageone-c2}

The structural setup \eqref{eq:main-structure} gives
\begin{align}
    \label{eq:deltax-expansion}
    \MetricDiff_{i+1}(\tilde\Gamma)  & =
    \begin{pmatrix}
         \TauTest_{i+1} - \TauTest_i(I + 2\Tau_i\tilde\Gamma)
        &
        \Lambda_{i}^* - \Lambda_{i+1}^* - 2 \TauTest_i \Tau_i K^* 
        \\
        2\SigmaTest_{i+1} \Sigma_{i+1}K + \Lambda_{i} - \Lambda_{i+1}
        & \SigmaTest_{i+2} - \SigmaTest_{i+1}
    \end{pmatrix}
    \\
    \notag
    &
    \simeq
    \begin{pmatrix}
         \TauTest_{i+1} - \TauTest_i(I + 2\Tau_i\tilde\Gamma)
        &
        A_{i+2}^*
        \\
        A_{i+2}
        & \SigmaTest_{i+2} - \SigmaTest_{i+1}
    \end{pmatrix}%
\quad\text{for}
    \\
    \notag
    A_{i+2} & \defeq
    (\SigmaTest_{i+1} \Sigma_{i+1} K- \Lambda_{i+1})
    +(\Lambda_{i} - K\Tau_i^*\TauTest_i^*).
\end{align}

\begin{lemma}
    \label{lemma:c2}
    Suppose \cref{ass:structural} (main structural condition) and \cref{ass:step}\,\ref{item:eta-relationships} \& \ref{item:step-update-formulas} hold.
    Then
    \begin{equation}
        \label{eq:cond-ai-expectation0-all}
        \E[A_{i+2}|\SAlg_i](\nextx-\thisx) = 0,
        \quad
        \E[A_{i+2}^*|\SAlg_i](\nexty-\thisy) = 0,
        \quad
        \E[A_{i+2}^*|\SAlg_{i-1}] = 0.
    \end{equation}
    Moreover, if $\tauTest_{j,i},\sigmaTest_{\ell,i+1} > 0$ for all $\in \N$, then $\tau_{j,i},\sigma_{\ell,i+1} \ge 0$ for all $i \in \N$. In particular, \cref{ass:step} is consistent with \cref{ass:structural} requiring  $\tau_{j,i},\sigma_{\ell,i+1} \ge 0$ and $\tauTest_{j,i},\sigmaTest_{\ell,i+1} > 0$ for all $i \in \N$; $j=1,\ldots,m$; and $\ell=1,\ldots,n$.
\end{lemma}

\begin{proof}
    We start by claiming that
    \begin{equation}
        \label{eq:lambda-cond0}
        \E[\lambda_{\ell,j,i+1}|\SAlg_i]
        =
        \sigmaTest_{\ell,i+1}\hat\sigma_{\ell,i+1}(1-\chi_{\iset V(i+1)}(\ell))
        -\tauTest_{j,i}\hat\tau_{j,i}(1-\chi_{\iset S(i)}(j))
    \end{equation}
    whenever $\ell \in \Neigh(j)$.
    Indeed, inserting \eqref{eq:lambda-choice} into  \eqref{eq:lambda-cond0}, we see the former to be satisfied if (for any given $\eta_{i+1}$),
    \begin{subequations}
    \label{eq:step-conds}
    \begin{align}
        \label{eq:step-conds-primal}
        \E[\tauTest_{j,i+1}\hat\tau_{j,i+1}\chi_{\iset S(i+1)}(j)|\SAlg_i]
        & = \eta_{i+1} - \tauTest_{j,i}\hat\tau_{j,i}(1-\chi_{\iset S(i)}(j)) \ge 0, \quad\text{and} \\
        \label{eq:step-conds-dual}
        \E[\sigmaTest_{\ell,i+2}\hat\sigma_{\ell,i+2}\chi_{\iset V(i+2)}(\ell)|\SAlg_i]
        & = \eta_{i+1} - \sigmaTest_{\ell,i+1}\hat\sigma_{\ell,i+1}(1-\chi_{\iset V(i+1)}(\ell)) \ge 0,
    \end{align}
    \end{subequations}
    over $j=1,\ldots,m$; $\ell=1,\ldots,n$; and $i \ge -1$, taking $\iset S(-1)=\{1,\ldots,m\}$ and $\iset V(0)=\{1,\ldots,n\}$.

    We can also write \eqref{eq:step-conds-primal} as
    \begin{equation}
        \label{eq:step-conds-primal-v2}
        \E[\tauTest_{j,i+1}\hat\tau_{j,i+1}\chi_{\iset S(i+1)}(j)|\SAlg_i]
        = \eta_{i+1} - \tauTest_{j,i}\tau_{j,i}\chi_{S(i) \setminus \iset S(i)}(j) \ge 0.
    \end{equation}
    If $j \not\in S(i)\setminus \iset S(i)$, since $\eta_{i+1} \ge 0$ by \cref{ass:step}\,\ref{item:eta-relationships}, it is clear that the inequality in \eqref{eq:step-conds-primal-v2} holds.
    If $j \in S(i) \setminus \iset S(i)$, using the corresponding case of \eqref{eq:step-update-formulas-x}, we rewrite the inequality as $\eta_{i+1} \ge \eta_{\tau,i}^\perp/(\pi_{j,i}-\iset \pi_{j,i})$. This is verified by \cref{ass:step}\,\ref{item:eta-relationships}.  Comparing to \cref{ass:step}\,\ref{item:step-update-formulas}, the inequality in \eqref{eq:step-conds-primal-v2} now inductively verifies, as claimed, $\tau_{j,i+1} \ge 0$ for all $i \in \N$ provided $\tauTest_{j,i} > 0$ for all $i \in \N$

    To verify the equality in \eqref{eq:step-conds-primal-v2}, let $\SAlg_i^+ \supset \SAlg_i$ be the smallest $\sigma$-algebra also containing the set $\{\omega \in \Omega \mid j \in \iset S(\omega)(i+1)\}$ (now not abusing notation for random variables, with $\omega$ standing for the random realisation that we typically omit).
    By \cref{ass:step}\,\ref{item:step-update-formulas}, more precisely \eqref{eq:step-update-formulas-y} shifted from $i$ to $i+1$, we see that $\tauTest_{j,i+1}\tau_{j,i+1}$ is $\SAlg_i^+$-measurable. Therefore, by standard properties of conditional expectations (see, e.g., \cite{shiryaev1996probability})
    \begin{equation}
        \label{eq:c2-stoch-argument1}
        \begin{split}
        \E[\tauTest_{j,i+1}\hat\tau_{j,i+1}\chi_{\iset S(i+1)}(j)|\SAlg_i]
        &
        =
        \E[\E[\tauTest_{j,i+1}\hat\tau_{j,i+1}\chi_{\iset S(i+1)}(j)|\SAlg_i^+]|\SAlg_i]
        \\
        &
        =
        \E[\E[\tauTest_{j,i+1}\tau_{j,i+1}|\SAlg_i^+]|\SAlg_i]
        \\
        &
        =
        \E[\E[1|\SAlg_i^+]\tauTest_{j,i+1}\tau_{j,i+1}|\SAlg_i]
        =
        \E[\iset\pi_{j,i+1}\tauTest_{j,i+1}\tau_{j,i+1}|\SAlg_i].
        \end{split}
    \end{equation}
    Further expanding with \eqref{eq:step-update-formulas-y} shifted from $i$ to $i+1$, and d using $\eta_{i+1} \in \Random(\SAlg_i; (0, \infty))$ from \cref{ass:step}\,\ref{item:eta-relationships}, we obtain
    \begin{equation}
         \label{eq:c2-stoch-argument2}
        \E[\iset\pi_{j,i+1}\tauTest_{j,i+1}\tau_{j,i+1}|\SAlg_i]
        =
        \E[\eta_{i+1}-\tauTest_{j,i}\tau_{j,i}\chi_{S(i) \setminus \iset S(i)}(j)|\SAlg_i]
        =
        \eta_{i+1}-\tauTest_{j,i}\tau_{j,i}\chi_{S(i) \setminus \iset S(i)}(j).
    \end{equation}
    This verifies the equality in \eqref{eq:step-conds-primal-v2}.
    Thus \eqref{eq:step-conds-primal} holds.

    Similarly we can verify \eqref{eq:step-conds-dual} and $\sigma_{\ell,i+1} \ge 0$. Thus \eqref{eq:lambda-cond0} holds, as does the non-negativity claim on the dual step lengths.

    Using \eqref{eq:nesting-s-v-cond} and \eqref{eq:lambda-cond0}, we now observe that $\lambda_{\ell,j,i}$ satisfies%
    \begin{subequations}%
    \label{eq:cond-ai-expectation-block0-simplified}%
    \begin{align}%
        \label{eq:cond-ai-expectation-block0-simplified-lambda}
        \lambda_{\ell,j,i}&=0,
            &&
            (j \not\in S(i) \text{ or } \ell \not\in V(i+1)),
            \quad\text{and}
        \\
        \label{eq:cond-ai-expectation-block0-simplified-exp}
        \E[\lambda_{\ell,j,i+1}|\SAlg_i]
            &=
            \tilde\lambda_{\ell,j,i+1},
        &&
        (j=1,\ldots,m;\, \ell \in \Neigh(j)),
    \end{align}%
    \end{subequations}%
    for $\tilde\lambda_{\ell,j,i+1} \defeq
            \sigmaTest_{\ell,i+1}\hat\sigma_{\ell,i+1}
            + \lambda_{\ell,j,i} - \tauTest_{j,i}\hat\tau_{j,i}$.
    Using \eqref{eq:step-compatibility}, which follows from \cref{lemma:step-compatibility}, \eqref{eq:cond-ai-expectation0-all} expands as%
    \begin{subequations}%
    \label{eq:cond-ai-expectation-block0}%
    \begin{align}%
        \label{eq:cond-ai-expectation-block0-x}
            \E[\lambda_{\ell,j,i+1}|\SAlg_i]
                &=
                \tilde\lambda_{\ell,j,i+1}
            && 
            (j \in S(i),\, \ell \in \Neigh(j)),
        \\
        \label{eq:cond-ai-expectation-block0-y}
            \E[\lambda_{\ell,j,i+1}|\SAlg_i]
                &=
                \tilde\lambda_{\ell,j,i+1}
            &&
            (\ell \in V(i+1),\, j \in \inv\Neigh(\ell)),
            \quad\text{and}
        \\
        \label{eq:cond-ai-expectation-block0-xprev}
            \E[\lambda_{\ell,j,i+1}|\SAlg_{i-1}]
                &=
                \E[
                    \tilde\lambda_{\ell,j,i+1}
                |\SAlg_{i-1}],
            &&
            (j=1,\ldots,m;\, \ell \in \Neigh(j)).
    \end{align}%
    \end{subequations}%
    Clearly \eqref{eq:cond-ai-expectation-block0-simplified-exp} implies \eqref{eq:cond-ai-expectation-block0-x} and \eqref{eq:cond-ai-expectation-block0-y}.
    Moreover, applying $\E[\freevar|\SAlg_{i-1}]$ to \eqref{eq:cond-ai-expectation-block0-simplified-exp} and using standard properties of nested conditional expectations we obtain \eqref{eq:cond-ai-expectation-block0-xprev}.
    We have therefore verified \eqref{eq:cond-ai-expectation0-all}.
\end{proof}

\begin{corollary}
    \label{cor:step-fullex}
    Suppose \cref{ass:step}\,\ref{item:eta-relationships} \& \ref{item:step-update-formulas} hold.
    Then
    \begin{align*}
        \E[\tauTest_{j,i+1}\hat\tau_{j,i+1}|\SAlg_i] & = \eta_{i+1} + \eta_{\tau,i+1}^\perp-\eta_{\tau,i}^\perp,
        \quad\text{and} \\
        \E[\sigmaTest_{\ell,i+2}\hat\sigma_{\ell,i+2}|\SAlg_i] & = \eta_{i+1} + \eta_{\sigma,i+1}^\perp - \eta_{\sigma,i}^\perp.
    \end{align*}
\end{corollary}
\begin{proof}
    Arguing analogously to \eqref{eq:c2-stoch-argument1} and  \eqref{eq:c2-stoch-argument2} with the cases $j \in S(i) \setminus \iset S(i)$ and $\ell \in V(i+1) \setminus \iset V(i+1)$ of \cref{ass:step}\,\ref{item:step-update-formulas}, we deduce
    \begin{align*}
        \E[\tauTest_{j,i+1}\hat\tau_{j,i+1}(1-\chi_{\iset S(i+1)}(j))|\SAlg_i] & = \eta_{\tau,i+1}^\perp,
        \quad\text{and} \\
        \E[\sigmaTest_{\ell,i+2}\hat\sigma_{\ell,i+2}(1-\chi_{\iset V(i+2)}(\ell))|\SAlg_{i}] & = \eta_{\sigma,i+1}^\perp.
    \end{align*}%
    Combined with \eqref{eq:step-conds} (in the proof of \cref{lemma:c2}) these imply the claim.
\end{proof}

For the next lemma we recall the coordinate notation $x_j$ and $y_\ell$ from \eqref{eq:coord-notation}.

\begin{lemma}
    \label{lemma:cond-delta-expectation-x-primeprime}
    Suppose \cref{ass:structural} (main structural condition) and \cref{ass:step} (step length parameter restrictions) hold.
    Then
    \[
        \E[\norm{\nextu-\realoptu}_{\MetricDiff_{i+1}(\tilde\Gamma)}^2-\norm{\nextu-\thisu}_{\Test_{i+1}\Precond_{i+1}}^2] \le \sum_{j=1}^m \boundfx_{j,i+2}(\tilde\gamma_j) + \sum_{\ell=1}^n  \boundfy_{\ell,i+2},
    \]
    where $\boundfx_{j,i+2}(\tilde\gamma_j)$ and $\boundfy_{\ell,i+2}$ are given in \eqref{eq:bx-j-choice} and \eqref{eq:by-ell-choice}, respectively.
\end{lemma}
\begin{proof}
  Since $\nextu \in \Random(\SAlg_i; X \times Y)$ and $\thisu \in \Random(\SAlg_{i-1}; X \times Y)$, standard nesting properties of conditional expectations show
  \begin{equation}
      \label{eq:nextu-realoptu-delta-expanded2}
      \begin{split}
      \E[\norm{\nextu-\realoptu}_{\MetricDiff_{i+1}(\tilde\Gamma)}^2]
      &
      =
      \E\bigl[
          \norm{\nextu-\thisu}_{\E[\MetricDiff_{i+1}(\tilde\Gamma)|\SAlg_i]}^2
          + \norm{\thisu-\realoptu}_{\E[\MetricDiff_{i+1}(\tilde\Gamma)|\SAlg_{i-1}]}^2
          \\
          & \phantom{ === }
          +
          2\iprod{\nextu-\thisu}{\thisu-\realoptu}_{\E[\MetricDiff_{i+1}(\tilde\Gamma)|\SAlg_i]}
      \bigr].
      \end{split}
  \end{equation}
  By \cref{lemma:c2}, \eqref{eq:cond-ai-expectation0-all} holds. Using \eqref{eq:deltax-expansion}, we therefore expand \eqref{eq:nextu-realoptu-delta-expanded2} into
  \begin{equation*}
    \begin{split}
      \E[\norm{\nextu-\realoptu}_{\MetricDiff_{i+1}(\tilde\Gamma)}^2]
      &=
      \E\bigl[
      \norm{\nextx-\thisx}_{\E[\TauTest_{i+1} - \TauTest_i(I + 2\Tau_i\tilde\Gamma)|\SAlg_i]}^2
      +
      \norm{\thisx-\realoptx}_{\E[\TauTest_{i+1} - \TauTest_i(I + 2\Tau_i\tilde\Gamma)|\SAlg_{i-1}]}^2    
      \\
      \MoveEqLeft[-2]
      +
      \norm{\nexty-\thisy}_{\E[\SigmaTest_{i+2} - \SigmaTest_{i+1}|\SAlg_i]}^2
      +
      \norm{\thisy-\realopty}_{\E[\SigmaTest_{i+2} - \SigmaTest_{i+1}|\SAlg_{i-1}]}^2
      \\
      \MoveEqLeft[2]
      +
      2
      \iprod{\nextx-\thisx}{\thisx-\realoptx}_{\E[\TauTest_{i+1} - \TauTest_i(I + 2\Tau_i\tilde\Gamma)|\SAlg_i]}
      +
      2
      \iprod{\nexty-\thisy}{\thisy-\realopty}_{\E[\SigmaTest_{i+2} - \SigmaTest_{i+1}|\SAlg_i]}
      \bigr].
    \end{split}
  \end{equation*}
  By \cref{ass:step}\,\ref{item:c2-y-primeprime},  $\E[\SigmaTest_{i+2} - \SigmaTest_{i+1}|\SAlg_i] \ge 0$. Standard properties of conditional expectations guarantee $\E[\E[\SigmaTest_{i+2} - \SigmaTest_{i+1}|\SAlg_i]|\SAlg_{i-1}]=\E[\SigmaTest_{i+2} - \SigmaTest_{i+1}|\SAlg_{i-1}]$.
  By \cref{lemma:cond-u-to-x-prime-block}, moreover
  \[
    -\norm{\nextu-\thisu}_{\Test_{i+1}\Precond_{i+1}}^2 \le -\delta \norm{\nextx-\thisx}_{\TauTest_i}.
  \]
  Use of Cauchy's inequality for arbitrary factors $\alpha_i,\beta_i>0$ therefore yields
  \begin{equation*}
    \begin{split}
      \E\bigl[\norm{\nextu&-\realoptu}_{\MetricDiff_{i+1}(\tilde\Gamma)}^2-\norm{\nextu-\thisu}_{\Test_{i+1}\Precond_{i+1}}^2\bigr]
      \\
      &
      =
      \E\bigl[
      \norm{\nextx-\thisx}_{\E[\TauTest_{i+1} - \TauTest_i(I + 2\Tau_i\tilde\Gamma)|\SAlg_i]+\alpha_i\abs{\E[\TauTest_{i+1} - \TauTest_i(I + 2\Tau_i\tilde\Gamma)|\SAlg_i]} - \delta\TauTest_i}^2
      \\
      \MoveEqLeft[-2]
      +
      \norm{\thisx-\realoptx}_{\E[\TauTest_{i+1} - \TauTest_i(I + 2\Tau_i\tilde\Gamma)|\SAlg_{i-1}]+\inv\alpha_i\abs{\E[\TauTest_{i+1} - \TauTest_i(I + 2\Tau_i\tilde\Gamma)|\SAlg_i]}}^2
      \\
      \MoveEqLeft[-2]
      +(1+\beta_i)\norm{\nexty-\thisy}_{\E[\SigmaTest_{i+2} - \SigmaTest_{i+1}|\SAlg_i]}^2
      +(1+\inv\beta_i)\norm{\thisy-\realopty}_{\E[\SigmaTest_{i+2} - \SigmaTest_{i+1}|\SAlg_{i-1}]}^2
      \bigr].
      \end{split}
  \end{equation*}
  Here we write $\abs{\sum_{j=1}^m c_j P_j} \defeq \sum_{j=1}^m \abs{c_j} P_j$.
  Therefore, choosing $\beta_i=1/2$, splitting the estimates into blocks, and using  \cref{ass:step}\, \ref{item:c2-x-primeprime} \& \ref{item:c2-y-primeprime}, we obtain the claim.
\end{proof}

It is relatively easy to satisfy \cref{ass:step}\,\ref{item:c2-y-primeprime} and to bound $\boundfy_{\ell,i+2}$. To estimate $\boundfx_{j,i+2}(\tilde\gamma_j)$, we need to derive more involved update rules. We next construct one example.

\begin{example}[Random primal test updates]
    \label{example:tautest-rule-rnd}
    If \eqref{eq:c2-x-primeprime-bound} holds, take $\rho_j \ge 0$, otherwise take $\rho_j=0$ ($j=1,\ldots,m$). Set
    \begin{equation}
        \label{eq:tautest-rule}
        \tauTest_{j,i+1} \defeq
        \tauTest_{j,i}(1+2\tilde \gamma_j\hat\tau_{j,i})+2\rho_j \inv\pi_{j,i}\chi_{S(i)}(j),
        \quad (j=1,\ldots,m; i \in \N).
    \end{equation}
    Then it is not difficult to show that $\tauTest_{j,i+1} \in \Random(\SAlg_i; (0, \infty))$ and
    $
        \boundfx_{j,i+2}(\tilde\gamma_j)
        =
        18 C_x \rho_j.
    $

    If we set $\rho_j=0$ and have just a single deterministically updated block, \eqref{eq:tautest-rule} is the standard rule \eqref{eq:cpaccel} with $\tauTest_i = \tau_i^{-2}$. The role of $\rho_j>0$ is to ensure some (slower) acceleration on non-strongly-convex blocks with $\tilde\gamma_j=0$. This is necessary for convergence rate estimates.
\end{example}

The difficulty with \eqref{eq:tautest-rule} is that the coupling parameter $\eta_{i+1}$ will depend on the random realisations of $S(i)$ through $\tauTest_{j,i+1}$. This will require communication in a parallel implementation of the algorithm. We therefore desire to update $\tauTest_{j,i+1}$ deterministically.
We delay the introduction of an appropriate update rule to \cref{sec:block-continued}.

\subsection{Satisfaction of the coupling conditions}
\label{sec:gap-satisfaction}

We still need to satisfy either of the coupling conditions \eqref{eq:eta-conds} to obtain gap estimates.

\begin{lemma}
    \label{lemma:verif-eta}
    Suppose \cref{ass:structural} (main structural condition), \cref{ass:step}\,\ref{item:step-update-formulas}, \ref{item:eta-relationships} \& \ref{item:eta-doubly} hold.
	Then the coupling condition \eqref{eq:cond-eta} holds.
\end{lemma}

\begin{proof}
	The condition \eqref{eq:cond-eta} holds if $\E[\tauTest_{j,i+1}\hat\tau_{j,i+1}]=\bar\eta_{i+1}=\E[\sigmaTest_{\ell,i+2}\hat\sigma_{\ell,i+2}]$ for some $\bar\eta_{i+1}$ for all $j=1,\ldots,m$ and $\ell=1,\ldots,n$.
	Taking $\bar\eta_{i+1} \defeq \E[\eta_{i+1} + \eta_{\tau,i+1}^\perp-\eta_{\tau,i}^\perp]$, the claim follows from \cref{cor:step-fullex} and \cref{ass:step}\,\ref{item:eta-doubly}.
\end{proof}


The alternative coupling condition \eqref{eq:cond-etatwo} requires $\E[\tauTest_{j,i+1}\hat\tau_{j,i+1}]=\bar\eta_{i+1}=\E[\sigmaTest_{\ell,i+1}\hat\sigma_{\ell,i+1}]$ for some $\bar\eta_{i+1}$. By \cref{cor:step-fullex}, this holds when
\begin{equation}
    \label{eq:cond-etatwo-verif}
	\E[\eta_{i+1} + \eta_{\tau,i+1}^\perp-\eta_{\tau,i}^\perp]
	= \bar\eta_i =
	\E[\eta_{i} + \eta_{\sigma,i}^\perp - \eta_{\sigma,i-1}^\perp].
\end{equation}
It is not clear how to satisfy this simultaneously with \cref{ass:step}\,\ref{item:eta-doubly}, so we use \ref{item:eta-singly}.

\begin{lemma}
    \label{lemma:verif-etatwo}
    Suppose \cref{ass:structural} (main structural condition), \cref{ass:step}\,\ref{item:step-update-formulas} \& \ref{item:eta-singly} hold.
    Then \ref{item:eta-relationships} holds if and only if $\iset V(i+1)=\emptyset$ and $V(i+1)=\{1,\ldots,n\}$.
    When this is the case, the coupling condition \eqref{eq:cond-etatwo} holds, necessarily $S(i)=\iset S(i)$, and
    \begin{subequations}
  	\label{eq:step-update-formulas-fulldual}
    \begin{align}
    	\label{eq:step-update-formulas-x-fulldual}
    	\tau_{j,i} & =
    		\eta_{i}/(\tauTest_{j,i}\iset \pi_{j,i})
    		&& (j \in S(i)),
 		\\
 		\label{eq:step-update-formulas-y-fulldual}
 		\sigma_{j,i+1} & =
            \eta_{i+1}/\sigmaTest_{j,i+1}
            &&
            (j \in \Neigh(S(i))).
    \end{align}%
    \end{subequations}%
\end{lemma}

\begin{proof}
    \Cref{ass:step}\,\ref{item:eta-singly}, i.e., $\eta_{\tau,i}^\perp=0$ and $\eta_{\sigma,i}^\perp=\eta_{i+1}$  reduces \ref{item:eta-relationships} to $\min_\ell (\nu_{\ell,i+1}-\iset \nu_{\ell,i+1}) \ge 1$.
    This holds if and only hold if $\nu_{\ell,i+1} \equiv 1$ and $\iset \nu_{\ell,i+1} \equiv 0$ for all $\ell=1,\ldots,m$.
    This holds, as claimed, if and only if $\iset V(i+1)=\emptyset$, $V(i+1)=\{1,\ldots,n\}$.
    Clearly in this case \eqref{eq:nesting} holds if and only if $S(i)=\iset S(i)$ this implies \eqref{eq:nesting}.
    With \cref{ass:step}\,\ref{item:eta-relationships} verified, \cref{cor:step-fullex} now rewrites \eqref{eq:cond-etatwo}  as \eqref{eq:cond-etatwo-verif}.
    This is clearly verified by $\eta_{\tau,i}^\perp=0$ and $\eta_{\sigma,i}^\perp=\eta_{i+1}$.
    Finally, \eqref{eq:step-update-formulas-fulldual} is a specialisation of \cref{ass:step}\,\ref{item:step-update-formulas} to the choices of \ref{item:eta-singly}.
\end{proof}

\begin{remark}
	We had to impose full dual updates to satisfy \eqref{eq:cond-etatwo}. This is akin to most existing primal--dual coordinate descent methods \cite{suzuki2013stochastic,bianchi2015stochastic,fercoq2015coordinate}. The algorithms in \cite{pesquet2014class,peng2016coordinate,yu2015doubly} are more closely related to our method, however, only \cite{yu2015doubly} provides convergence rates for single-block sampling schemes under full strong convexity of both $G$ and $F^*$.
\end{remark}

\subsection{Sampling patterns}
\label{sec:sampling}

There are not many possible fully deterministic sampling patterns allowed by \eqref{eq:nesting} with \cref{ass:step}. Indeed, \eqref{eq:step-conds-primal} reads in the deterministic setting
\[
    \tauTest_{j,i+1}\tau_{j,i+1}\chi_{\iset S(i+1)}(j)
    +  \tauTest_{j,i}\hat\tau_{j,i}\chi_{S(i) \setminus \iset S(i)}(j)) = \eta_{i+1}.
\]
Since $\eta_{i+1}>0$, $j \not \in S(i) \setminus \iset S(i)$ implies $j \in \iset S(i+1)$, which implies $j \not \in \subset S(i+i) \setminus \iset S(i+1)$.
Therefore, once in the independently updated set, the block $j$ will always stay there.
Due to \eqref{eq:nesting-s-v-cond}, if $\iset V(i+1) \ne \emptyset$ consistently, for most $K$, the set $S(i)$ will grow. Therefore, after a small number of iterations $N$, either $j \in \iset S(i)$ for $i \ge N$, or $j \in S(i)=\{1,\ldots,n\}$. Similar considerations hold for the dual blocks. Therefore, the way each block is updated in deterministic methods is, after a small number of iterations, fixed. There does not, therefore, appear to be significant improvements possible over consistently taking $S(i)=\iset S(i)=\{1,\ldots,m\}$, $\iset V(i+1)=\emptyset$ and $V(i+1)=\{1,\ldots,m\}$ (or the converse dual-first order).

Regarding stochastic algorithms, we start with a few options for sampling $S(i)$ in \cref{alg:alg-blockcp-fulldual} with iteration-independent probabilities $\pi_{j,i} \equiv \pi_j$.

\begin{example}[Independent probabilities]
    \label{ex:sampling-indep}
    If all the blocks $\{1,\ldots,m\}$ are chosen independently, we have $\P(\{j, k\} \subset S(i))=\pi_j\pi_k$ for $j \ne k$, where $\pi_j \in (0, 1]$.
\end{example}

\begin{example}[Fixed number of random blocks]
    \label{ex:sampling-fixed-number}
    If we have a fixed number $M$ of processors, we may want to choose a subset $S(i) \subset \{1,\ldots,m\}$ such that $\#S(i)=M$.
\end{example}

The next example gives a simple way to satisfy \eqref{eq:nesting-iset-s-v-cond} for \cref{alg:alg-blockcp}.

\begin{example}[Alternating x-y and y-x steps]
    \label{ex:sampling-alternating}
    Let us randomly alternate between $\iset S(i)=\emptyset$ and $\iset V(i+1)=\emptyset$.
    That is, with some probability $\mathbb{p}_x$, we choose to take an $x$-$y$ step that omits lines \ref{step:xorth} and \ref{step:yiset} in \cref{alg:alg-blockcp}, and with probability $1-\mathbb{p}_x$, an $y$-$x$ step that omits the lines \ref{step:xiset} and \ref{step:yorth}. If $\tilde\pi_j=\P[j \in \iset S|\iset S\ne\emptyset]$, and $\tilde\nu_\ell=\P[\ell \in \iset V|\iset V \ne \emptyset]$ denote the probabilities of the rule used to sample $\iset S = \iset S(i)$ and $\iset V = \iset V(i+1)$ when non-empty, then \eqref{eq:nesting} gives
    \begin{align}
        \notag
        \iset\pi_j &= \mathbb{p}_x \tilde\pi_j,
        &
        \pi_j & = \mathbb{p}_x \tilde\pi_j + (1-\mathbb{p}_x)\P[j \in \inv\Neigh(\iset V)|\iset V \ne \emptyset],
        \\
        \notag
        \iset\nu_\ell &= (1-\mathbb{p}_x) \tilde\nu_\ell,
        &
        \nu_\ell & = (1-\mathbb{p}_x) \tilde\nu_j + \mathbb{p}_x \P[\ell \in \Neigh(\iset S)|\iset S \ne \emptyset].
    \end{align}
    To compute $\pi_j$ and $\nu_\ell$ we thus need to know $\Neigh$ and the exact sampling pattern.
\end{example}

\begin{remark}
    Based on \cref{ex:sampling-alternating}, we can derive an algorithm where the only randomness comes from alternating between full $x$-$y$ and full $y$-$x$ steps.
\end{remark}

\section{Rates of convergence}
\label{sec:block-continued}

We now need to satisfy \cref{ass:step}. This involves choosing update rules for $\eta_{i+1}$, $\eta_{\tau,i+1}^\perp$, $\eta_{\sigma,i+1}^\perp$, $\tauTest_{j,i+1}$ and $\sigmaTest_{\ell,i+1}$.
At the same time, to obtain good convergence rates, we need to make $d_{j,N}^x(\tilde\gamma_j)$ and $d_{\ell,N}^y$ small in \eqref{eq:convergence-result-stochastic-block}.
We do these tasks here, including stating two final versions of our algorithm \cref{alg:alg-blockcp} (doubly stochastic) and \cref{alg:alg-blockcp-fulldual} (full dual updates).
Specifically, in \cref{sec:primal-det} we introduce and study a deterministic alternative to the example random update rule for $\tauTest_{j,i+1}$ in \cref{example:tautest-rule-rnd}. 
The analysis of the new rule is easier, and it allows the computation of $\eta_i$, which will also be deterministic, without communication in parallel implementations of our algorithms. 
Afterwards, in \cref{sec:etaperp} we look at possible choices for the parameters $\eta_{\tau,i}^\perp$ and $\eta_{\sigma,i}^\perp$, \emph{which are only needed in stochastic variants of \cref{alg:alg-blockcp}}.
In \crefrange{sec:worstcase}{sec:sigmatest-fullstrong-rnd} we then give various useful choices of $\eta_i$ and $\sigmaTest_{\ell,i}$ that yield concrete convergence rates.

We assume for simplicity that the sampling pattern is independent of iteration,
\begin{align}
	\label{eq:constant-probabilities}
	\iset\pi_{j,i} \equiv \iset\pi_j>0,\quad
	\iset\nu_{\ell,i} \equiv \iset\nu_\ell, \quad
    \pi_{j,i} \equiv \pi_j,
    \quad\text{and}\quad
    \nu_{\ell,i} \equiv \nu_\ell.
\end{align}

\subsection{Deterministic primal test updates}
\label{sec:primal-det}

The next lemma gives a deterministic alternative to \cref{example:tautest-rule-rnd}.
We recall that $\gamma_j \ge 0$ is the factor of (strong) convexity of $G_j$.

\begin{lemma}
	\label{lemma:tautest-rule-det}
	Suppose \cref{ass:step}\,\ref{item:eta-relationships} \& \ref{item:step-update-formulas}, and \eqref{eq:constant-probabilities} hold, and that $i \mapsto \eta_{\tau,i}^\perp$ is non-decreasing.
    Suppose, moreover, that either \eqref{eq:c2-x-primeprime-bound} holds or  $\sup_{j=1,\ldots,m} \rho_j = 0$.
	Also take $\tau_{j,0},\tauTest_{j,0}>0$ and $\bar\gamma_j \ge 0$ such that $\rho_j+\bar\gamma_j>0$, and set
    \begin{equation}
        \label{eq:tautest-rule-det}
        \tauTest_{j,i+1}
        \defeq
        \tauTest_{j,i} + 2(\bar\gamma_j\eta_i+\rho_j),
        \quad (j=1,\ldots,m; i \in \N).
    \end{equation}
    Then for some $c_j >0$ and all $N \ge 1$ holds
	\begin{subequations}
	\label{eq:tautest-conditions-for-eta}
	\begin{align}
		\label{eq:tautest-sigma-algebra}
		\tauTest_{j,N+1} & \in \Random(\SAlg_{N-1}; (0, \infty)),
		\\
		\label{eq:tautest-expectation}
		\E[\tauTest_{j,N}] & = \tauTest_{j,0} + 2\rho_jN+2\bar\gamma_j \sum_{i=0}^{N-1} \E[\eta_i],
		\quad\text{and}
		\\
		\label{eq:invtautest-estim}
		\E[\inv\tauTest_{j,N}] & \le c_j \inv N, \quad (N \ge 1).
    \intertext{If $\tilde\gamma_j \in [\bar\gamma_j,\gamma_j]$, ($j=1,\ldots,m$), satisfy}
        \label{eq:det-rule-delta-cond}
        2\tilde\gamma_j\bar\gamma_j \eta_i &
        \le (\tilde\gamma_j-\bar\gamma_j)\delta\tauTest_{j,i}, \quad (j \in S(i),\, i \in \N),
        \\
	\intertext{then \cref{ass:step}\,\ref{item:c2-x-primeprime} holds, and}
	    \label{eq:djnx-estim}
	    d_{j,N}^x(\tilde\gamma_j) & = 18 \rho_j C_x N.
    \intertext{Finally, if $\eta_i \ge b_j \min_j \tauTest_{j,i}^p$ for some $p,b_j>0$, then for some $\tilde c_j > 0$ holds}
        \label{eq:invtautest-estim2}
        1 & \ge \bar\gamma_j \tilde c_j N^{p+1}\E[\inv\tauTest_{j,N}],
        \quad (N \ge 4).
    \end{align}
	\end{subequations}
\end{lemma}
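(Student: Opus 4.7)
The plan is to establish the six claims \eqref{eq:tautest-sigma-algebra}--\eqref{eq:invtautest-estim2} in order. Claims \eqref{eq:tautest-sigma-algebra} and \eqref{eq:tautest-expectation} are direct: iterating the rule \eqref{eq:tautest-rule-det} telescopes to
\[
    \tauTest_{j,N} = \tauTest_{j,0} + 2\rho_j N + 2\bar\gamma_j\sum_{i=0}^{N-1}\eta_i,
\]
from which positivity (using $\tauTest_{j,0}>0$ and non-negative increments), $\SAlg_{N-1}$-measurability (by induction using $\eta_i \in \Random(\SAlg_i; [0,\infty))$), and the expectation formula all follow. For \eqref{eq:invtautest-estim} I would use the hypothesis that $i\mapsto\eta_i$ is non-decreasing (from \eqref{eq:eta-relationships}) to obtain the pathwise bound $\tauTest_{j,N}\ge\tauTest_{j,0}+2(\rho_j+\bar\gamma_j\eta_0)N$, which is strictly linear in $N$ because $\rho_j+\bar\gamma_j>0$ and $\eta_0>0$, and then take the expectation of the reciprocal.

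For the primal test bound \eqref{eq:c2-x-primeprime} and the sum \eqref{eq:djnx-estim}, the key observation is that under the deterministic rule \eqref{eq:tautest-rule-det} the increment $\tauTest_{j,i+1}-\tauTest_{j,i}$ is already $\SAlg_i$-measurable, so the conditional expectations in \eqref{eq:q-j}--\eqref{eq:h-j} collapse to the $\SAlg_i$-measurable quantity
\[
    D_{j,i} \defeq \tauTest_{j,i+1} - \tauTest_{j,i}(1+2\hat\tau_{j,i}\tilde\gamma_j) = 2(\bar\gamma_j\eta_i+\rho_j - \tauTest_{j,i}\hat\tau_{j,i}\tilde\gamma_j).
\]
For $j\in S(i)$, the step rule \eqref{eq:step-update-formulas-x} gives an explicit formula for $\tauTest_{j,i}\tau_{j,i}$ in each of the two sub-cases $j\in\iset S(i)$ and $j\in S(i)\setminus\iset S(i)$; combined with the compatibility \eqref{eq:eta-relationships} controlling the carry-over via $\eta_{\tau,i}^\perp$, this yields two-sided control of $\tauTest_{j,i}\tau_{j,i}$ by a multiple of $\eta_i/\iset\pi_{j,i}$. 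The hypothesis \eqref{eq:det-rule-delta-cond} is then precisely what is needed to absorb the $\rho_j$-free portion of $(1+\alpha_i)D_{j,i}$ into $\delta\tauTest_{j,i}$ for a suitable fixed $\alpha_i$ (e.g.\ $\alpha_i=1/2$ as in \cref{example:tautest-rule-rnd}), leaving a residual of size $O(\rho_j)$. The outcome is bounds of the form $\max\{0, q_{j,i+2}(\tilde\gamma_j)\}\le 3\rho_j\chi_{S(i)}(j)$ and $\max\{0, h_{j,i+2}(\tilde\gamma_j)\}\le 6\rho_j$, which fed into \eqref{eq:bx-j-choice} give $\boundfx_{j,i+2}(\tilde\gamma_j)\le 18\rho_j C_x$ per iteration; summing yields \eqref{eq:djnx-estim}. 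When $\rho_j=0$ the residual vanishes, which verifies \eqref{eq:c2-x-primeprime-zero}; when $\rho_j>0$ we have \eqref{eq:c2-x-primeprime-bound} by assumption. Either way \eqref{eq:c2-x-primeprime} holds.

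For the polynomial bound \eqref{eq:invtautest-estim2} I would set $m_i\defeq\min_k\tauTest_{k,i}$. The hypothesis $\eta_i\ge b_j m_i^p$ combined with \eqref{eq:tautest-rule-det} yields the scalar recursion $m_{i+1}\ge m_i + 2\bar\gamma_{\min} b_j m_i^p$ with $\bar\gamma_{\min}\defeq\min_k\bar\gamma_k$; a standard discrete Bihari comparison with the ODE $\dot m = 2\bar\gamma_{\min} b_j m^p$ produces $m_i\ge c' i^{1/(1-p)}$ for $i\ge 4$ when $p\in[0,1)$. Since $1/(1-p)\ge p$, this implies $\eta_i\ge c'' i^p$; substituting into the telescoped formula yields $\tauTest_{j,N}\ge\bar\gamma_j\tilde c_j N^{p+1}$, and since $\tauTest_{j,N}$ is then deterministic, $1/\E[\inv\tauTest_{j,N}]=\tauTest_{j,N}\ge\bar\gamma_j\tilde c_j N^{p+1}$. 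The main obstacle throughout is the case analysis for $D_{j,i}$ in the primal test step: the carry-over term $\tauTest_{j,i-1}\tau_{j,i-1}\chi_{S(i-1)\setminus\iset S(i-1)}(j)$ appearing in \eqref{eq:step-update-formulas-x} must be estimated via \eqref{eq:eta-relationships} so that \eqref{eq:det-rule-delta-cond} still delivers the required cancellation uniformly across the sub-cases of $S(i)$.
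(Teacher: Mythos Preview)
Your outline for \eqref{eq:tautest-sigma-algebra}--\eqref{eq:invtautest-estim} is correct and matches the paper. The real difficulty, and where your sketch has a genuine gap, is the treatment of $h_{j,i+2}$ in the step leading to \eqref{eq:c2-x-primeprime} and \eqref{eq:djnx-estim}.

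You claim that both conditional expectations in the definitions of $q$ and $h$ collapse to $D_{j,i}$. This is true for $q_{j,i+2}$, whose conditionings are with respect to $\SAlg_i$, but \emph{not} for $h_{j,i+2}$: its first term is $\E[D_{j,i}\mid\SAlg_{i-1}]$, and $D_{j,i}$ is not $\SAlg_{i-1}$-measurable because $\hat\tau_{j,i}=\tau_{j,i}\chi_{S(i)}(j)$ depends on the iteration-$i$ sampling $S(i)\in\SAlg_i\setminus\SAlg_{i-1}$. Concretely, on the event $j\notin S(i)$ one has $D_{j,i}=2(\bar\gamma_j\eta_i+\rho_j)$, which grows with $\eta_i$; so a fixed $\alpha_i=1/2$ cannot yield $\max\{0,h_{j,i+2}\}\le 6\rho_j$. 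The paper exploits precisely the $\SAlg_{i-1}$-averaging: from \eqref{eq:step-update-formulas} and the non-decrease of $\eta_{\tau,i}^\perp$ one obtains $\E[\tauTest_{j,i}\hat\tau_{j,i}\mid\SAlg_{i-1}]\ge\eta_i$, hence $\E[D_{j,i}\mid\SAlg_{i-1}]\le 2\rho_j-2(\tilde\gamma_j-\bar\gamma_j)\eta_i$. This negative drift is what must absorb the pathwise term $\alpha_i^{-1}|D_{j,i}|$, and that requires $\alpha_i$ to be taken \emph{large}, of order $\bar\gamma_j/(\tilde\gamma_j-\bar\gamma_j)$ or $(\tilde\gamma_j\iset\pi_j^{-1}+\bar\gamma_j)/(\tilde\gamma_j-\bar\gamma_j)$ depending on the sign of $\bar\gamma_j\eta_i-\tilde\gamma_j\tauTest_{j,i}\hat\tau_{j,i}$. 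The hypothesis \eqref{eq:det-rule-delta-cond} is used not for $h$ but for $q$, where it guarantees $2(1+\alpha_i)(\bar\gamma_j\eta_i-\tilde\gamma_j\tauTest_{j,i}\hat\tau_{j,i})\le\delta\tauTest_{j,i}$ with this same large $\alpha_i$.

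For \eqref{eq:invtautest-estim2} your Bihari-type comparison is more than is needed and, as written, covers only $p<1$ and relies on $\min_k\bar\gamma_k>0$, neither of which is assumed. The paper simply bootstraps from the linear bound already proved for \eqref{eq:invtautest-estim}: $\tauTest_{k,i}\ge 2i(\rho_k+\bar\gamma_k\eta_0)$ gives $\min_k\tauTest_{k,i}\ge ci$, hence $\eta_i\ge b_j' i^p$ directly for any $p>0$, and then $\tauTest_{j,N}\ge 2\bar\gamma_j\sum_{i<N}\eta_i\ge c'\bar\gamma_j N^{p+1}$ by comparison with the integral.
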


\begin{proof}
    Since \cref{ass:step}\,\ref{item:eta-relationships} guarantees $\eta_{i} \in \Random(\SAlg_{i-1}; (0, \infty))$, we deduce \eqref{eq:tautest-sigma-algebra} from \eqref{eq:tautest-rule-det}.
    In fact, $\tauTest_{j,i+1}$ is deterministic as long as $\eta_i$ is deterministic. 

    The claim \eqref{eq:tautest-expectation} is immediate from using \eqref{eq:tautest-rule-det} to compute
	\begin{equation}
		\label{eq:tautest-rule-det-expanded}
	    \tauTest_{j,N}
	    =\tauTest_{j,N-1} + 2(\bar\gamma_j\eta_{N-1}+\rho_j)
	    =\tauTest_{j,0} + 2\rho_jN+2\bar\gamma_j \sum_{i=0}^{N-1} \eta_i.
	\end{equation}

	Since $i \mapsto \eta_i$ is non-decreasing, clearly	$\tauTest_{j,N} \ge 2N\tilde\rho_j$ for $\tilde\rho_j \defeq \rho_j+\bar\gamma_j \eta_0 > 0$.
	Then $\inv\tauTest_{j,N} \le \frac{1}{2\tilde\rho_j N}$.
	Taking the expectation proves \eqref{eq:invtautest-estim}.

	Clearly \eqref{eq:invtautest-estim2} holds if $\bar\gamma_j=0$, so assume $\bar\gamma_j>0$.	
	Using the assumption $\eta_i \ge b_j \min_j \tauTest_{j,i}^p$ and $\tauTest_{j,i} \ge 2i\tilde\rho_j$ that we just proved in \eqref{eq:tautest-rule-det-expanded}, we estimate
    \[
        \tauTest_{j,N}
        \ge \tauTest_{j,0} + b_j(2\tilde\rho_j)^p \sum_{i=1}^N i^p
        \ge \tauTest_{j,0} + b_j(2\tilde\rho_j)^p \int_{2}^N x^p \d x
        \ge \tauTest_{j,0} + \inv p b_j(2\tilde\rho_j)^p (N^{p+1}-2).
    \]
	Thus $\inv\tauTest_{j,N} \le 1/(\bar\gamma_j\tilde c_j N^{1+p})$ for some $\tilde c_j>0$.
	Taking the expectation proves \eqref{eq:invtautest-estim2}.

	It remains to prove \eqref{eq:djnx-estim} and \cref{ass:step}\,\ref{item:c2-x-primeprime}.
	Abbreviating $\gamma_{j,i} \defeq \bar\gamma_j + \rho_j\inv\eta_i$, we write $\tauTest_{j,i+1} = \tauTest_{j,i}+2\gamma_{j,i}\eta_i$.  
    Since $i \mapsto \eta_{\tau,i}^\perp$ is non-decreasing, \cref{cor:step-fullex} gives
    \begin{equation}
        \label{eq:det-rule-eta-estim}
        \E[\tauTest_{j,i}\hat\tau_{j,i}|\SAlg_{i-1}]=\eta_i+\eta_{\tau,i}^\perp-\eta_{i-1,\tau}^\perp \ge \eta_i.
    \end{equation}
    Expanding the defining equation \eqref{eq:h-j} of $h_{j,i+2}(\tilde\gamma_j)$, with the help of \eqref{eq:det-rule-eta-estim} we estimate
    \[
        \begin{split}
        h_{j,i+2}(\tilde\gamma_j)
        &
        =
        2\E[\gamma_{j,i}\eta_i-\tilde\gamma_j\tauTest_{j,i}\hat\tau_{j,i}|\SAlg_{i-1}]+2\inv\alpha_i\abs{\E[\gamma_{j,i}\eta_i-\tilde\gamma_j\tauTest_{j,i}\hat\tau_{j,i}|\SAlg_i]}
        \\
        &
        \le
        2(\gamma_{j,i}-\tilde\gamma_j)\eta_i+2\inv\alpha_i\abs{\gamma_{j,i}\eta_i-\tilde\gamma_j\tauTest_{j,i}\hat\tau_{j,i}}
        \\
        &
        \le
        2(1+\inv\alpha_i)\rho_j
        +
        2(\bar\gamma_j-\tilde\gamma_j)\eta_i
        +
        2\inv\alpha_i\abs{\bar\gamma_j\eta_i-\tilde\gamma_j\tauTest_{j,i}\hat\tau_{j,i}}.
        \end{split}
    \]
    Since \eqref{eq:det-rule-delta-cond} implies $\bar\gamma_j \le \tilde\gamma_j$, if also
    \begin{gather}
        \label{eq:det-rule-alphacond1}
        \inv\alpha_i\abs{\bar\gamma_j\eta_i-\tilde\gamma_j\tauTest_{j,i}\hat\tau_{j,i}}
        \le (\tilde\gamma_j-\bar\gamma_j)\eta_i,
    \shortintertext{then}
        \label{eq:det-h-j-est}
        \E[\max\{0,h_{j,i+2}(\tilde\gamma_j)\}]
        \le
        2(1+\inv\alpha_i)\rho_j.
    \end{gather}
    We claim \eqref{eq:det-rule-alphacond1} this to hold for
    \begin{equation}
    	\label{eq:det-rule-alpha1}
        \alpha_i
        \defeq
        \begin{cases}
        	\min_j \bar\gamma_j/(\tilde\gamma_j-\bar\gamma_j),
        	& \bar\gamma_j\eta_i>\tilde\gamma_j\tauTest_{j,i}\hat\tau_{j,i}, 
        	\\
        	\min_j
        	(\tilde\gamma_j\inv{\iset\pi_j}+\bar\gamma_j)/(\tilde\gamma_j-\bar\gamma_j),
        	&
        	\bar\gamma_j\eta_i \le \tilde\gamma_j\tauTest_{j,i}\hat\tau_{j,i}.
        \end{cases}
    \end{equation}
    The case $\bar\gamma_j\eta_i>\tilde\gamma_j\tauTest_{j,i}\hat\tau_{j,i}$ is clear. Otherwise, to justify the case $\bar\gamma_j\eta_i \le \tilde\gamma_j\tauTest_{j,i}\hat\tau_{j,i}$, we observe that \eqref{eq:det-rule-alphacond1} can in this case be rewritten as $\tilde\gamma_j\tauTest_{j,i}\hat\tau_{j,i} \le (\alpha_i(\tilde\gamma_j-\bar\gamma_j)-\bar\gamma_j)\eta_i$.
	With the choice of $\alpha_i$ in \eqref{eq:det-rule-alpha1}, we see this to hold if $\tauTest_{j,i}\hat\tau_{j,i} \le \inv{\iset\pi_j}\eta_i$. We consider the cases $j \in \iset S(i)$ and $j \in S(i) \setminus \iset S(i)$ separately.
    In the case $j \in \iset S(i)$, this is inequality immediate from \eqref{eq:step-update-formulas-x} in \cref{ass:step}\,\ref{item:step-update-formulas} and \cref{lemma:c2}.
    If $j \in S(i) \setminus \iset S(i)$, \eqref{eq:step-update-formulas-x} and \cref{ass:step}\,\ref{item:eta-relationships} give
    \[
        \tauTest_{j,i}\hat\tau_{j,i}(\pi_j-\iset \pi_j) \le \eta_{\tau,i}^\perp
        \le \min_{j'}(\pi_{j'}-\iset \pi_{j'})\eta_i
        \le (\pi_{j}-\iset \pi_{j})\eta_i
        \le (\pi_{j}-\iset \pi_{j})\inv{\iset\pi_j}\eta_i.
    \]
    In the last step we have used that $\iset\pi_j \in (0, 1]$ by \eqref{eq:constant-probabilities}.
    This finishes verifying \eqref{eq:det-h-j-est}.

    Next, we expand \eqref{eq:q-j}, obtaining
    \[
        \begin{split}
        q_{j,i+2}(\tilde\gamma_j)
        &
        =
        \bigl(
            2\E[\gamma_{j,i}\eta_i-\tilde\gamma_j\tauTest_{j,i}\hat\tau_{j,i}|\SAlg_i]
            +2\alpha_i\abs{\E[\gamma_{j,i}\eta_i-\tilde\gamma_j\tauTest_{j,i}\hat\tau_{j,i}|\SAlg_i]}-\
            \delta\tauTest_{j,i}
        \bigr)\chi_{S(i)}(j),
        \\
        &
        =
        \bigl(
            2(\gamma_{j,i}\eta_i-\tilde\gamma_j\tauTest_{j,i}\hat\tau_{j,i})
            +2\alpha_i\abs{\gamma_{j,i}\eta_i-\tilde\gamma_j\tauTest_{j,i}\hat\tau_{j,i}}
            -\delta\tauTest_{j,i}
        \bigr)\chi_{S(i)}(j),
        \\
        &
        \le
        \bigl(
            2(1+\alpha_i)\rho_j
            +
            2(\bar\gamma_{j}\eta_i-\tilde\gamma_j\tauTest_{j,i}\hat\tau_{j,i})
            +2\alpha_i\abs{\bar\gamma_{j}\eta_i-\tilde\gamma_j\tauTest_{j,i}\hat\tau_{j,i}}
            -\delta\tauTest_{j,i}
        \bigr)\chi_{S(i)}(j).
        \end{split}
    \]
    Since $\eta_i$ and $\tauTest_{j,i}\tau_{j,i}$ are increasing, if also
    \begin{gather}
        \label{eq:det-q-j-alphacond1}
        2(\bar\gamma_{j}\eta_i-\tilde\gamma_j\tauTest_{j,i}\hat\tau_{j,i})
        +2\alpha_i\abs{\bar\gamma_{j}\eta_i-\tilde\gamma_j\tauTest_{j,i}\hat\tau_{j,i}}
        \le
        \delta\tauTest_{j,i}
        \quad (j \in S(i)),
    \shortintertext{then}
        \label{eq:det-q-j-est}
        \E[q_{j,i+2}(\tilde\gamma_j)] \le 2(1+\alpha_i)\rho_j.
    \end{gather}
    Inserting $\alpha_i$ from \eqref{eq:det-rule-alpha1}, we see \eqref{eq:det-q-j-alphacond1} to follow from \eqref{eq:det-rule-delta-cond}.
	Finally, \eqref{eq:det-h-j-est} and \eqref{eq:det-q-j-est} show that \eqref{eq:c2-x-primeprime-zero} holds with $\rho_j=0$.
    Thus \cref{ass:step}\,\ref{item:c2-x-primeprime} holds.
    From \cref{prop:general-block-conditions} now
    \[
        \boundfx_{j,i+2}(\tilde\gamma_j)
        =
        8(1+\alpha_i)\rho_j C_x
        +2(1+\inv\alpha_i)\rho_j C_x.
    \]
    Clearly $\alpha_i$ defined in \eqref{eq:det-rule-alpha1} is bounded above and below, so we obtain \eqref{eq:djnx-estim}.
\end{proof}

\subsection{The parameters $\eta_{\tau,i}^\perp$ and $\eta_{\sigma,i}^\perp$}
\label{sec:etaperp}

We now want to satisfy \cref{ass:step}\,\ref{item:eta-doubly} for doubly-stochastic methods. As it turns out, the parameters $\eta_{\tau,i}^\perp$ and $\eta_{\sigma,i}^\perp$, do not have any effect on convergence rates. Here are a few options.

\begin{lemma}
	\label{lemma:etaperp-examples}
	Assume \eqref{eq:constant-probabilities} and that $i \mapsto \eta_i$ is non-decreasing with $\eta_i \in \Random(\SAlg_{i-1}; (0,\infty))$.
    Then \cref{ass:step}\,\ref{item:eta-relationships} \& \ref{item:eta-doubly} hold and both $i \mapsto \eta_{\tau,i}^\perp$ and $i \mapsto \eta_{\sigma,i}^\perp$ are non-decreasing if either:
	\begin{enumerate}[label=(\roman*)]
		\item \label{item:etaperp-constant}
			(Constant rule) We take $\eta_{\tau,i}^\perp \equiv \eta_\tau^\perp$ and $\eta_{\sigma,i}^\perp \equiv \eta_{\sigma}^\perp$ for constant $\eta_\sigma^\perp, \eta_\tau^\perp > 0$ satisfying 
		    \begin{equation*}
		        \eta_0 \cdot \min_j(\pi_{j}-\iset \pi_{j}) \ge \eta_\tau^\perp,
		        \quad\text{and}\quad
		        \eta_0 \cdot \min_\ell (\nu_{\ell}-\iset \nu_{\ell}) \ge \eta_\sigma^\perp.
		    \end{equation*}

		\item \label{item:etaperp-proportional}
			(Proportional rule) For some $\alpha \in (0, 1)$ we take $\eta_{\tau,i}^\perp \defeq \eta_{\sigma,i}^\perp \defeq \alpha\eta_i$ satisfying
		    \begin{equation*}
		        \min_j~(\pi_{j}-\iset \pi_{j}) \ge \alpha,
		        \quad\text{and}\quad
		        \min_\ell~(\nu_{\ell}-\iset \nu_{\ell}) \ge \alpha.
		    \end{equation*}
	\end{enumerate}
\end{lemma}

\begin{proof}
	Clearly both rules satisfy \cref{ass:step}\,\ref{item:eta-relationships} \& \ref{item:eta-doubly}.
	That $i \mapsto \eta_{\tau,i}^\perp$ and $i \mapsto \eta_{\sigma,i}^\perp$ are non-decreasing and belong to $\Random(\SAlg_{i-1}; [0, \infty))$ is obvious.
\end{proof}


\subsection{Worst-case rules for $\eta_i$}
\label{sec:worstcase}

To verify \cref{ass:step}\,\ref{item:sigmatest-choice-1} we take deterministic worst-case bounds $\wcase_j, \wcase_{j,\ell} \ge 0$ such that
\begin{equation}
    \label{eq:wcase}
    \wcase_j \defeq \max_\ell \wcase_{\ell,j}
    \quad\text{and}\quad
    \wcase_{\ell,j} \ge
        \inv{\iset\pi_{j}}\chi_{\iset S(i)}(j)
        +
        \inv{\iset\nu_{\ell}}\chi_{\iset V(i+1)}(\ell)
    \quad (i \in \N).
\end{equation}
Since we assume iteration-independent probabilities \eqref{eq:constant-probabilities}, such bounds exist.

\begin{lemma}
    \label{lemma:eta-choice-basic}
    Suppose \cref{ass:step}\,\ref{item:step-update-formulas} and \eqref{eq:constant-probabilities} hold. With $(\kappa_1,\ldots,\kappa_n) \in \kappafamily$ take%
    \begin{equation}
        \label{eq:eta-choice-basic}
        \eta_i 
        \defeq
        \min_{\ell=1,\ldots,n} \sqrt{\frac{(1-\delta)\sigmaTest_{\ell,i+1}}{\kappa_\ell(\wcase_{\ell,1}^{2}\inv\tauTest_{1,i}, \ldots, \wcase_{\ell,m}^{2}\inv\tauTest_{m,i})}}
        \quad (i \ge 0).
    \end{equation}
    Then \cref{ass:step}\,\ref{item:sigmatest-choice-1} holds.
    Moreover, $\eta_i \in \Random(\SAlg_{i-1}; (0,\infty))$ provided $\sigmaTest_{i+1} \in \Random(\SAlg_{i-1}; (0,\infty))$.
\end{lemma}

\begin{proof}
    Recalling the expression for $\lambda_{\ell,j,i}$ in \eqref{eq:lambda-choice}, \cref{ass:step}\,\ref{item:step-update-formulas}, and \eqref{eq:constant-probabilities} imply $\lambda_{\ell,j,i} \le \eta_i\wcase_{\ell,j}$ for $\ell \in \Neigh(j)$.
    By the monotonicity of $\kappa_\ell$ (assumed in \cref{def:kappa}), \cref{ass:step}\,\ref{item:sigmatest-choice-1} will therefore hold if
    \begin{equation}
        \label{eq:sigmatest-cond-worstcase}
        \sigmaTest_{\ell,i+1}
        \ge
        \frac{\eta_i^2}{1-\delta} \kappa_\ell(\wcase_{\ell,1}^{2}\inv\tauTest_{1,i}, \ldots, \wcase_{\ell,m}^{2}\inv\tauTest_{m,i}).
    \end{equation}
    This is verified by inserting $\eta_i$ from \eqref{eq:eta-choice-basic}.
    Clearly \eqref{eq:eta-choice-basic} also verifies $\eta_i \in \Random(\SAlg_{i-1}; (0,\infty))$ when $\sigmaTest_{i+1} \in \Random(\SAlg_{i-1}; (0,\infty))$.
\end{proof}

The next lemma provides a choice of $\sigmaTest_{i+1} \in \Random(\SAlg_{i-1}; (0,\infty))$ that also satisfies \cref{ass:step}\,\ref{item:c2-y-primeprime}.
The resulting $\eta_i$ we express later in \eqref{eq:eta-choice-second} to collect all rules in one place.

\begin{lemma}
	\label{lemma:eta-lower-estim}
    Let $(\kappa_1,\ldots,\kappa_n) \in \kappafamily$ and take $\eta_i$ according to \eqref{eq:eta-choice-basic} with $\sigmaTest_{\ell,i+1}=\eta_i^{2-1/p} \sigmaTest_{\ell,0}$ for some $\sigmaTest_{\ell,0}>0$ and $p \in (0, 1]$.
    If $\tauTest_{j,i} \in \Random(\SAlg_{i-1}; (0, \infty))$, then $\eta_i,\sigmaTest_{i+1} \in \Random(\SAlg_{i-1}; (0,\infty))$.
	If, moreover, \eqref{eq:invtautest-estim} holds, then $\E[\eta_i] \ge c_\eta^{p} i^{p}$ and $\eta_i \ge b_\eta^{p} \min_j \tauTest_{j,i}^{p}$ for some constants $c­_\eta, b_\eta>0$ independent of $p$.
\end{lemma}

\begin{proof}
    That $\eta_i,\sigmaTest_{i+1} \in \Random(\SAlg_{i-1}; (0,\infty))$  is clear from \eqref{eq:eta-choice-basic} and $\tauTest_{j,i} \in \Random(\SAlg_{i-1}; (0, \infty))$.

	With $\MIN \sigmaTest_0 \defeq \min_{\ell=1,\ldots,n} \sigmaTest_{\ell,0}$,	from \eqref{eq:eta-choice-basic} also
	\[
   	    \eta_i^{1/p} 
	    \ge
	    \frac{(1-\delta)\MIN{\sigmaTest_{0}}}{\max_{\ell=1,\ldots,n} \kappa_\ell(\wcase_{\ell,1}^{2}\inv\tauTest_{1,i}, \ldots, \wcase_{\ell,m}^{2}\inv\tauTest_{m,i})}.
	\]
	Since $\hat \mu_{\ell,j,i}=0$ for $\ell \not \in \Neigh(j)$, using \cref{def:kappa}\,\ref{item:kappa-bounded}
    , we get
	\begin{equation*}
	    \eta_i^{1/p}
	    \ge
	    {\frac{(1-\delta)\MIN{\sigmaTest_{0}}}{\MAX \kappa \sum_{j=1}^n \max_\ell \wcase_{\ell,j}^{2}\inv\tauTest_{j,i}}}
	    \ge
	    {\frac{1}{\sum_{j=1}^n \inv b_j\inv\tauTest_{j,i}}}
	\end{equation*}
	for $b_j \defeq (1-\delta)\MIN{\sigmaTest_{0}}/(\MAX \kappa \wcase_{j}^{2})$. This shows $\eta_i \ge \min_j b_j^{p} \tauTest_{j,i}^{p}$.
	Since $x \mapsto 1/x$ and $x \mapsto x^q$ are convex on $[0, \infty)$ for $q \ge 1$, Jensen's inequality gives
	\begin{equation*}
	    \E[\eta_i]
	    \ge
	    \frac{1}{
	    \E\bigl[(\textstyle\sum_{j=1}^n b_j^{-1}\inv\tauTest_{j,i})^{p}\bigr]}
	    \ge
	    \frac{1}{
	    \bigl(\textstyle\sum_{j=1}^n b_j^{-1}\E[\inv\tauTest_{j,i}]\bigr)^{p}}.
	\end{equation*}
	By an application of \eqref{eq:invtautest-estim} we obtain $\E[\eta_i] \ge c_\eta^{p} i^{p}$ for $c_\eta \defeq 1/\sum_{j=1}^m b_j^{-1}c_j$.
\end{proof}


\subsection{Mixed rates under partial strong convexity}
\label{sec:partconv}

We are finally ready to state our main result and algorithms.
We recall that by \cref{lemma:pp-equiv}, \eqref{eq:pp} is equivalent to \eqref{eq:cpoper-theta} under the structural conditions of \cref{ass:structural}.
Dividing the updates of \eqref{eq:cpoper-theta} into individual block updates, and taking the step length rules from \cref{ass:step}\,\ref{item:step-update-formulas}, we obtain the steps of the doubly-stochastic method \cref{alg:alg-blockcp}. If we perform full dual updates, i.e., force \cref{ass:step}\,\ref{item:eta-singly} and following \cref{lemma:verif-etatwo} take $\iset V(i+1)=\{1,\ldots,n\}$ and $\iset S(i)=S(i)$, we get the simpler steps of \cref{alg:alg-blockcp-fulldual}. Regarding the updates of the remaining parameters that are not specified directly in the algorithm skeletons, we start with:

\begin{theorem}
    \label{thm:convergence-basic}
    Assume the block-separable structure \eqref{eq:g-fstar-separable}, writing $\gamma_j \ge 0$ for the factor of (strong) convexity of $G_j$. Let $\delta \in (0, 1)$ and $(\kappa_1,\ldots,\kappa_n) \in \kappafamily$ (see \cref{def:kappa}).
    In \cref{alg:alg-blockcp} or \cref{alg:alg-blockcp-fulldual}, take
    \begin{enumerate}[label=(\roman*)]
        \item\label{item:convergence-basic-tautest}
        $\tauTest_{j,0} > 0$ freely and $\tauTest_{j,i+1} \defeq \tauTest_{j,i} + 2(\bar\gamma_j\eta_i+\rho_j)$ for some $\rho_j\ge 0$ and $\bar\gamma_j \in [0,\gamma_j]$ with $\rho_j+\bar\gamma_j>0$.
        \item\label{item:convergence-basic-sigmatest}
        $\sigmaTest_{\ell,0}>0$ freely and $\sigmaTest_{\ell,i} \defeq \sigmaTest_{\ell,0}\eta_i^{2-1/p}$ for some fixed $p \in [1/2, 1]$.
        \item\label{item:convergence-basic-eta}
        $\eta_{\tau,i}^\perp,\eta_{\sigma,i}^\perp > 0$ (in \cref{alg:alg-blockcp}) following \cref{lemma:etaperp-examples}, and, with $\wcase_j$ given by \eqref{eq:wcase},
        \begin{equation}
            \label{eq:eta-choice-second}
            \eta_i \defeq \min_{\ell=1,\ldots,n} \left(\frac{(1-\delta)\sigmaTest_{\ell,0}}{\kappa_\ell(\wcase_{\ell,1}^{2}\inv\tauTest_{1,i}, \ldots, \wcase_{\ell,m}^{2}\inv\tauTest_{m,i})}\right)^{p}.
        \end{equation}
    \end{enumerate}
    Let $\realoptu \in \inv H(0)$, i.e., solve \eqref{eq:oc}, and suppose the following hold:
    \begin{enumerate}[label=(\Alph*)]
        \item\label{item:conv:primal} $\sup_{j=1,\ldots,m} \rho_j=0$ or $\sup_{j=1,\ldots,m;\, i \in \N} \norm{\nextx_j-\realoptx_j}^2  \le C_x$ for a constant $C_x>0$.

        \item\label{item:conv:dual} $p = \frac{1}{2}$ or both $\sup_{\ell=1,\ldots,n;\, i \in \N} \norm{\nexty_\ell-\realopty_\ell}^2 \le C_y$ and $\bar\gamma_{j^*}=0$ for some $j^* \in \{1,\ldots,m\}$.

        \item\label{item:conv:gamma} With $\ell^*(j)$ and $\MIN\kappa$ given by  \cref{def:kappa}, for some $\tilde\gamma_j \in [\bar\gamma_j, \gamma_j]$ for all $j=1,\ldots,m$ we have the initialisation bound
    	\begin{equation}
    	    \tilde\gamma_j=\bar\gamma_j=0\quad\text{or}\quad
    	    \frac{2\tilde\gamma_j\bar\gamma_j}{\tilde\gamma_j-\bar\gamma_j}
    	    \left(\frac{1-\delta}{\MIN\kappa \wcase_j}\right)^p
    	    \le \delta \sigmaTest_{\ell^*(j),0}^{-p}\tauTest_{j,0}^{1-p}.
    	\end{equation}	
    \end{enumerate}
    Then
    \begin{equation}
        \label{eq:convergence-result-stochastic-block-v1}
        \sum_{j=1}^m \frac{\delta \tilde c_j \bar\gamma_j}{2}\E\bigl[\norm{x_j^N-\realoptx_j}\bigr]^2
        + g_{p,N}
        \le\textstyle
        \frac{\norm{u^0-\realoptu}_{\Test_0\Precond_0}^2 + 18 C_x (\sum_{j=1}^m \rho_j)N+\sum_{\ell=1}^n \sigmaTest_{\ell,0}\bigl(C_*N^{2p-1} + \delta_*\bigr)}{\displaystyle 2N^{p+1}},
    \end{equation}    
    when $N \ge 4$ and the weighted gap on the ergodic variables,
    \[
    	g_{p,N} \defeq
    	\begin{cases}
    		 c_p \gap(\tilde x_N, \tilde y_N), & \text{\cref{alg:alg-blockcp}},\, \tilde\gamma_j \le \gamma_j/2 \text{ for all } j, \\
    		 c_{*,p} \gap(\tilde x_{*,N}, \tilde y_{*,N}), & \text{\cref{alg:alg-blockcp-fulldual}},\, \tilde\gamma_j \le \gamma_j/2 \text{ for all } j, \\
    		 0, & \text{otherwise}.
    	\end{cases}
    \]
    The constants $C_*,\delta_* \ge 0$ are zero if $p=1/2$ while the constants $c_{p}, c_{*,p} > 0$.
\end{theorem}

\begin{remark}
    \label{rem:p-sigmatest}
	If $p=1/2$, \eqref{eq:convergence-result-stochastic-block-v1} yields a mixed $O(1/N^{3/2})+O(1/N^{1/2})$ convergence rate.
	If $p=1$, we get a mixed $O(1/N^2)+O(1/N)$ convergence rate.
\end{remark}

\begin{remark}
	\Cref{thm:convergence-basic} is valid (with suitable constants) for general primal update rules as long as \eqref{eq:tautest-conditions-for-eta} holds and $i \mapsto \tauTest_{j,i}$ is non-decreasing.
	This is the case for the deterministic rule of \cref{lemma:tautest-rule-det}. For the random rule of \cref{example:tautest-rule-rnd}, the rest of the conditions hold, but we have not been able to verify \eqref{eq:invtautest-estim2}. This has the implication that only the gap estimates hold.
\end{remark}

\begin{Algorithm}[t!]
    \caption{Doubly-stochastic primal--dual method}
    \label{alg:alg-blockcp}
    \begin{subequations}
    \begin{algorithmic}[1]
    \REQUIRE $K \in \linear(X; Y)$, $G \in \convex(X)$, and $F^* \in \convex(Y)$ with the separable structures \eqref{eq:g-fstar-separable}.
    \REQUIRE Rules for $\tauTest_{j,i}$, $\sigmaTest_{\ell,i+1}$, $\eta_{i+1}, \eta_{\tau,i+1}^\perp, \eta_{\sigma,i+1}^\perp > 0$ from \cref{thm:convergence-basic}, \cref{cor:convergence-unaccelerated}, or \ref{cor:convergence-second-det-bounded}.
    \REQUIRE Sampling patterns for $S(i), \iset S(i), V(i+1)$, and $\iset V(i+1)$, ($i \in \N$), subject to the nesting condition \eqref{eq:nesting} (p.~\pageref{eq:nesting}) with iteration-independent probabilities \eqref{eq:constant-probabilities}; see \cref{sec:sampling}.
    \STATE Choose initial iterates $x^0 \in X$ and $y^0 \in Y$.
    \STATE Initialise $\tau_{j,-1},\sigma_{\ell,0} \defeq 0$, ($j=1,\ldots,m$; $\ell=1,\ldots,m$).
    \FORALL{$i \ge 0$ \textbf{until} a stopping criterion is satisfied}
    \STATE Sample $\iset S(i) \subset S(i) \subset \{1,\ldots,m\}$ and $\iset V(i+1) \subset V(i+1) \subset \{1,\ldots,n\}$.
    \STATE For each $j \in \iset S(i)$, compute
    {\belowdisplayskip=0ex\abovedisplayskip=0.7ex
    \begin{align}
        \notag
        \tau_{j,i}& \textstyle \defeq \frac{\eta_{i} - \tauTest_{j,i-1}\tau_{j,i-1}\chi_{S(i-1) \setminus \iset S(i-1)}(j)}
                    {\tauTest_{j,i}\iset \pi_{j,i}}, \quad\text{and}
        \\
        \notag
        \nextx_j & \textstyle
 \defeq \inv{(I+\tau_{j,i} \subdiff G_j)}\left(
            \thisx_j - \tau_{j,i} \sum_{\ell \in \Neigh(j)} K_{\ell, j}^* \thisy_\ell
        \right),
        \quad{where}\quad K_{\ell, j}\defeq Q_\ell K P_j.
    \end{align}
    }
    \STATE For each $\ell \in \iset V(i+1)$, compute
    {\belowdisplayskip=0ex\abovedisplayskip=0.7ex
    \begin{align}
        \notag
        \sigma_{j,i+1} &  \textstyle \defeq \frac{\eta_{i} - \sigmaTest_{j,i}\sigma_{j,i}\chi_{V(i) \setminus \iset V(i)}(j)}
                    {\sigmaTest_{j,i+1}\iset \nu_{\ell,i+1}}, \quad\text{and}
        \\
        \notag
        \nexty_\ell & \textstyle \defeq \inv{(I+\sigma_{\ell,i+1} \subdiff F^*_\ell)}\left(
            \thisy_\ell + \sigma_{\ell,i+1} \sum_{j \in \inv\Neigh(\ell)} K_{\ell, j} \thisx_j
        \right).
    \end{align}
    }
    \STATE\label{step:xiset} For each $j \in \iset S(i)$ and $\ell \in \Neigh(j)$, set
    {\belowdisplayskip=0ex\abovedisplayskip=0.7ex
    \[
        \textstyle
        \nexxt{\tilde w}_{\ell,j}
        \defeq
        \theta_{\ell,j,i+1}(\nextx_j-\thisx_j) + \nextx_j
        \quad\text{with}\quad
        \theta_{\ell,j,i+1}
        \defeq
        \frac{\tau_{j,i}\tauTest_{j,i}}{\sigma_{\ell,i+1}\sigmaTest_{\ell,i+1}}.
    \]
    }
    \STATE\label{step:yiset} For each $\ell \in \iset V(i+1)$ and $j \in \inv\Neigh(\ell)$, set
    {\belowdisplayskip=0ex\abovedisplayskip=0.7ex
    \[
        \textstyle
        \nexxt{\tilde v}_{\ell,j}
        \defeq
        b_{\ell,j,i+1}(\nexty_\ell-\thisy_\ell) + \thisy_\ell
        \quad\text{with}\quad
        b_{\ell,j,i+1}
        \defeq
        \frac{\sigma_{\ell,i+1}\sigmaTest_{\ell,i+1}}{\tau_{j,i}\tauTest_{j,i}}.
    \]
    }
    \STATE\label{step:xorth} For each $j \in S(i) \setminus \iset S(i)$, compute
    {\belowdisplayskip=0ex\abovedisplayskip=0.7ex
    \begin{align}
        \notag
        \tau_{j,i}& \defeq \textstyle \frac{\eta_{\tau,i}^\perp}
                    {\tauTest_{j,i}(\pi_{j,i}-\iset \pi_{j,i})}, \quad\text{and}
        \\
        \notag
        \nextx_j & \defeq \textstyle \inv{(I+\tau_{j,i} \subdiff G_j)}\left(
            \thisx_j - \tau_{j,i} \sum_{\ell \in \Neigh(j)} K_{\ell, j}^* \nexxt{\tilde v}_{\ell,j}
        \right).
    \end{align}
    }
    \STATE\label{step:yorth} For each $\ell \in V(i+1) \setminus \iset V(i+1)$ compute
    {\belowdisplayskip=0ex\abovedisplayskip=0.7ex
    \begin{align}
        \notag
        \sigma_{j,i+1} &  \textstyle \defeq \frac{\eta_{\sigma,i}^\perp}
                    {\sigmaTest_{j,i+1}(\nu_{\ell,i+1}-\iset\nu_{\ell,i+1})}, \quad\text{and}
        \\
        \notag
        \nexty_\ell & \textstyle \defeq \inv{(I+\sigma_{\ell,i+1} \subdiff F^*_\ell)}\left(
            \thisy_\ell + \sigma_{\ell,i+1} \sum_{j \in \inv\Neigh(\ell)} K_{\ell, j} \nexxt{\tilde w}_{\ell,j}
        \right).
    \end{align}
    }
    \ENDFOR
    \end{algorithmic}
    \end{subequations}
\end{Algorithm}

\begin{Algorithm}[t!]
    \caption{Block-stochastic primal--dual method, primal randomisation only}
    \label{alg:alg-blockcp-fulldual}
    \begin{subequations}
    \begin{algorithmic}[1]
    \REQUIRE $K \in \linear(X; Y)$, $G \in \convex(X)$, and $F^* \in \convex(Y)$ with the separable structures \eqref{eq:g-fstar-separable}.
    \REQUIRE Rules for $\tauTest_{j,i}, \sigmaTest_{\ell,i+1}, \eta_{i+1} \in \Random(\SAlg_i; (0, \infty))$ from \cref{thm:convergence-basic}, \cref{cor:convergence-unaccelerated}, or \ref{cor:convergence-second-det-bounded}.
    \REQUIRE Iteration-independent \eqref{eq:constant-probabilities} sampling pattern for the set $S(i)$, ($i \in \N$); see \cref{sec:sampling}.
    \STATE Choose initial iterates $x^0 \in X$ and $y^0 \in Y$.
    \FORALL{$i \ge 0$ \textbf{until} a stopping criterion is satisfied}
    \STATE Sample $S(i) \subset \{1,\ldots,m\}$.
    \STATE For each $j \not\in S(i)$, set $\nextx_j \defeq \thisx_j$.
    \STATE For each $j \in S(i)$, 
         with $\tau_{j,i} \defeq \eta_i\inv\pi_{j,i}\inv\tauTest_{j,i}$, compute
    {\belowdisplayskip=0ex\abovedisplayskip=1ex
    \[
        \textstyle
        \nextx_j \defeq \inv{(I+\tau_{j,i} \subdiff G_j)}\left(
            \thisx_j - \tau_{j,i} \sum_{\ell \in \Neigh(j)} K_{\ell, j}^* \thisy_\ell
        \right),
        \quad{where}\quad K_{\ell, j}\defeq Q_\ell K P_j.
    \]
    }
    \STATE For each $j \in S(i)$ set
    {\belowdisplayskip=0ex
    \[
        \overnextx_j
        \defeq
        \theta_{j,i+1}(\nextx_j-\thisx_j) + \nextx_j
        \quad\text{with}\quad
        \theta_{j,i+1}
        \defeq
        \frac{\eta_i}{\pi_{j,i}\eta_{i+1}}.
    \]
    }
    \STATE For each $\ell \in \{1,\ldots,n\}$ using $\sigma_{\ell,i+1} \defeq \eta_{i+1}\inv\sigmaTest_{\ell,i+1}$, compute
    {\belowdisplayskip=0ex
    \[
        \textstyle
        \nexty_\ell \defeq \inv{(I+\sigma_{\ell,i+1} \subdiff F^*_\ell)}\left(
            \thisy_\ell + \sigma_{\ell,i+1} \sum_{j \in \inv\Neigh(\ell)} K_{\ell, j} \overnextx_j
        \right).
    \]
    }
    \ENDFOR
    \end{algorithmic}
    \end{subequations}
\end{Algorithm}

\begin{proof}
    We use \cref{prop:general-block-conditions}, so need to verify \cref{ass:structural,ass:step}. First of all, \eqref{eq:sigma-algebra-cond} follows from the updates rules for the testing and step length parameters, that only depend on previous realisations of $S(i)$ and $V(i+1)$. The rest of the conditions of \cref{ass:structural} are clear from \cref{lemma:pp-equiv}, the derivation of \cref{alg:alg-blockcp,alg:alg-blockcp-fulldual} from \eqref{eq:cpoper-theta}, and the requisite nesting condition \eqref{eq:nesting} within the algorithms themselves.

    Regarding the requirements \ref{item:sigmatest-choice-1}--\ref{item:c2-y-primeprime} of \cref{ass:step}, we proceed as follows:
    \begin{itemize}
        \item[\ref{item:sigmatest-choice-1}] The choice $\sigmaTest_{\ell,i+1} \defeq \eta_i^{2-1/p}\sigmaTest_{\ell,0}$ in \ref{item:convergence-basic-sigmatest} shows that \eqref{eq:eta-choice-second} is equivalent to the formula \eqref{eq:eta-choice-basic} for $\eta_i$.
        Thus \cref{lemma:eta-choice-basic} verifies \ref{item:sigmatest-choice-1}.

        \item[\ref{item:eta-relationships}]
        It is clear that $i \mapsto \tauTest_{j,i}$ and $i \mapsto \sigmaTest_{\ell,i}$ are non-decreasing. Therefore \eqref{eq:eta-choice-basic} shows that $i \mapsto \eta_i$ is non-decreasing. Moreover, \cref{lemma:eta-lower-estim} verifies that $\eta_i \in \Random(\SAlg_{i-1}; (0,\infty))$.

        \Cref{alg:alg-blockcp-fulldual} by construction satisfies \cref{ass:step}\,\ref{item:eta-singly} and has both $\iset V(i+1)=\emptyset$ and $V(i+1)=\{1,\ldots,n\}$. It therefore suffices to refer to \cref{lemma:verif-etatwo}.

        \Cref{alg:alg-blockcp}, by its own statement, satisfies \eqref{eq:constant-probabilities}.
        Therefore, \cref{lemma:etaperp-examples} shows \cref{ass:step}\,\ref{item:eta-relationships} \& \ref{item:eta-doubly}, and that also $i \mapsto \eta_{\tau,i}^\perp$ is non-decreasing.

        \item[\ref{item:eta-options}] Proved together with \ref{item:eta-relationships} above.

        \item[\ref{item:step-update-formulas}]
        These choices are encoded into \cref{alg:alg-blockcp}.
        For \cref{alg:alg-blockcp-fulldual} we recall \cref{lemma:verif-etatwo}.

        \item[\ref{item:c2-x-primeprime}]
        We use \cref{lemma:tautest-rule-det}.
        We have already showed \cref{ass:step}\,\ref{item:eta-relationships} \& \ref{item:step-update-formulas}. Moreover, the algorithms satisfy the iteration-independent probability assumption \eqref{eq:constant-probabilities}.
        By \ref{item:conv:primal}, either $\sup_j \rho_j = 0$ or \eqref{eq:c2-x-primeprime-bound} holds. We still need to satisfy \eqref{eq:det-rule-delta-cond}.
        Using \cref{def:kappa}\,\ref{item:kappa-nondegeneracy} in \eqref{eq:eta-choice-second}, we estimate
        \begin{equation}
            \label{eq:eta-upper-p-estim}
            \eta_i
            \le
            \biggl(
            \frac{(1-\delta)\sigmaTest_{\ell^*(j),0}}{\MIN\kappa \wcase_j} \tauTest_{j,i}
            \biggr)^p.
        \end{equation}
        By \ref{item:conv:gamma}, therefore, either $\tilde\gamma_j=\bar\gamma_j=0$, or $2\tilde\gamma_j\bar\gamma_j\eta_i \le \delta(\tilde\gamma_j-\bar\gamma_j)\tauTest_{j,0}^{1-p}\tauTest_{j,i}^p$.  By \ref{item:convergence-basic-tautest}, $i \mapsto \tauTest_{j,i}$, so this gives \eqref{eq:det-rule-delta-cond}.
        \Cref{lemma:tautest-rule-det} now shows \cref{ass:step}\,\ref{item:c2-x-primeprime}.

        \item[\ref{item:c2-y-primeprime}]
        If $p=1/2$, by \cref{rem:p-sigmatest}, $\sigmaTest_{\ell,i} \equiv \sigmaTest_{\ell,0}$. Therefore \eqref{eq:c2-y-primeprime-zero}  holds.
        If $p \ne 1/2$, the same remark and \ref{item:conv:primal} guarantee \eqref{eq:c2-y-primeprime-bound}.
    \end{itemize}

    With \cref{ass:step,ass:structural} now verified, \cref{prop:general-block-conditions} provides the estimate
    \begin{equation}
        \label{eq:convergence-result-stochastic-block-repeat}
        \sum_{k=1}^m \frac{\delta}{2\E[\inv\tauTest_{k,N}]} \cdot \E\left[\norm{x_k^N-\realoptx_k}\right]^2
        + \tilde g_N
        \le
        \frac{1}{2}\norm{u^0-\realoptu}_{\Test_0 \Precond_0}^2
        +
        \sum_{j=1}^m
        \frac{1}{2}d_{j,N}^x(\tilde\gamma_j)
        +
        \sum_{\ell=1}^n \frac{1}{2}d_{\ell,N}^y,
    \end{equation}
    where $\tilde g_N$, $d_{j,N}^x(\tilde\gamma_j)$ and $d_{\ell,N}^y$ are given in \eqref{eq:convergence-result-stochastic-block-parts}
	To obtain convergence rates, we still need to further analyse this estimate, mainly $\zeta_N$ and $\zeta_{*,N}$ within $\tilde g_N$.

    We start with $\zeta_N$ and $\zeta_{*,N}$.
    By \cref{lemma:etaperp-examples} for \cref{alg:alg-blockcp} and directly by \cref{ass:step}\,\ref{item:eta-singly} for \cref{alg:alg-blockcp-fulldual} , $i \mapsto \eta_{\tau,i}^\perp$ is non-decreasing (as is $i \mapsto \eta_{\sigma,i}^\perp$).
    We recall the coupling variable $\bar\eta_i$ from \eqref{eq:eta-conds}.
    Observe that  \eqref{eq:invtautest-estim} holds as we have verified the conditions of \cref{lemma:tautest-rule-det} above.
    By \cref{cor:step-fullex,lemma:eta-lower-estim}, therefore, in both cases, \eqref{eq:cond-eta} and \eqref{eq:cond-etatwo}, for some constant $c_\eta>0$,
    \[
        \bar\eta_i = \E[\eta_{i} + \eta_{\tau,i}^\perp - \eta_{\tau,i-1}^\perp] \ge \E[\eta_i] \ge c_\eta^{p} i^{p}.
    \]
	Thus we estimate $\zeta_N$ from \eqref{eq:zeta} as
	\begin{equation}
	    \label{eq:partconv-zeta-rate}
	    \begin{split}
	    \zeta_N &
	    =
	    \sum_{i=0}^{N-1} \bar\eta_i \ge \sum_{i=0}^{N-1} \E[\eta_i]
	    \ge
	    c_\eta^{p} \sum_{i=0}^{N-1} i^{p}
	    \ge
	    c_\eta^{p} \int_0^{N-2} x^{p} \d x
	    \\
	    &
	    \ge
	    \frac{c_\eta^p}{p+1}(N-2)^{p+1}
	    \ge
	    \frac{c_\eta^p}{2^{p+1}(p+1)}N^{p+1}
		=:
	    c_p N^{p+1}	   
	    \quad (N \ge 4).
	    \end{split}
	\end{equation}
	Similarly, for some $c_{*,p}>0$, the quantity $\zeta_{*,N}$ defined in \eqref{eq:zeta} satisfies
	\begin{equation}
	    \label{eq:partconv-zetatwo-rate}
	    \zeta_{*,N} \ge \sum_{i=1}^{N-1} \E[\eta_i]
	    \ge \frac{c_\eta^p}{p+1}((N-2)^{p+1}-1)
	    \ge
	    c_{*,p} N^{p+1}
	    \quad (N \ge 4).
	\end{equation}

   	If $p=1/2$, \ref{item:convergence-basic-sigmatest} clearly implies $d^y_{\ell,N}=\E[\sigmaTest_{\ell,N}-\sigmaTest_{\ell,0}] \equiv 0$. Therefore, we can take $C_*,\delta_*=0$.
	Otherwise, since $0 \le 2-1/p \le 1$, the map $t \mapsto t^{2-1/p}$ is concave.
    Therefore, using \eqref{eq:convergence-result-stochastic-block-parts}, \ref{item:convergence-basic-sigmatest}, and Jensen's inequality, we deduce
	\begin{equation*}
	    d_{y,\ell}^N
        =
        \sum_{i=0}^{N-1} 9C_y\E[\sigmaTest_{\ell,i+2}-\sigmaTest_{\ell,i+1}]
	    =
	    9C_y\sigmaTest_{\ell,0}(\E[\eta_{N+1}^{2-1/p}]-\E[\eta_{1}^{2-1/p}])
        \le
        9C_y\sigmaTest_{\ell,0}\E[\eta_{N+1}]^{2-1/p}.
	\end{equation*}
    The condition \ref{item:conv:dual} provides $j^* \in \{1,\ldots,m\}$ with $\gamma_{j^*}=0$, so that a referral to \eqref{eq:tautest-expectation} shows
    $
        \E[\tauTest_{j^*,N}]
        = \tauTest_{j^*,0} + 2N\rho_{j^*}
    $.
    By \eqref{eq:eta-upper-p-estim} for some $C_*,\delta_* > 0$ then
    \begin{equation}
        \label{eq:sigmatest-increasing-dy}
         d_{y,\ell}^N
         \le 9C_y\sigmaTest_{\ell,0}
            \biggl(
            \frac{(1-\delta)\sigmaTest_{\ell^*(j^*),0}}{\MIN\kappa \wcase_{j^*}} \E[\tauTest_{j^*,i}]
            \biggr)^{2p-1}
         \le \sigmaTest_{\ell,0}(C_*N^{2p-1}+\delta_*).
    \end{equation}

    Finally, to estimate $d_{j,N}^x(\tilde\gamma_j)$, \cref{lemma:eta-lower-estim} shows $\eta_i \ge b_\eta^p \min_j \tauTest_{j,i}^p$, ($j=1,\ldots,m$). Thus \eqref{eq:invtautest-estim2} and \eqref{eq:djnx-estim} in \cref{lemma:tautest-rule-det} give $1/\E[\inv\tauTest_{j,N}] \ge \bar\gamma_j \tilde c_j N^{p+1}$ for $N \ge 4$, and $d_{j,N}^x(\tilde\gamma_j) = 18 \rho_j C_x N$.
   	Now \eqref{eq:convergence-result-stochastic-block-v1} is immediate by applying these estimates and \eqref{eq:partconv-zeta-rate}--\eqref{eq:sigmatest-increasing-dy} to \eqref{eq:convergence-result-stochastic-block-repeat}.
\end{proof}

\subsection{Unaccelerated algorithm}
\label{sec:unaccelerated}

If $\rho_j=0$ and $\bar\gamma_j=\tilde\gamma_j=0$ for all $j=1,\ldots,m$, then $\tauTest_{j,i} \equiv \tauTest_{j,0}$. Consequently \cref{lemma:eta-choice-basic} gives $\eta_i \equiv \eta_0$. Recalling $\zeta_N$ from \eqref{eq:zeta}, we see that $\zeta_N=N\eta_0$. 
Likewise $\zeta_{*,N}$ from \eqref{eq:zeta} satisfies $\zeta_{*,N}=(N-1)\eta_0$.
Clearly also $d^y_{\ell,N}=0$ and $d^x_{j,N}(\tilde\gamma_j)=0$.
Inserting this information into \eqref{eq:convergence-result-stochastic-block-repeat}, we immediately obtain:

\begin{corollary}
    \label{cor:convergence-unaccelerated}
    Assume the block-separable structure \eqref{eq:g-fstar-separable}.
    Let $\delta \in (0, 1)$ and $(\kappa_1,\ldots,\kappa_n) \in \kappafamily$.
    In \cref{alg:alg-blockcp} or \ref{alg:alg-blockcp-fulldual}, take
    \begin{enumerate}[label=(\roman*)]
        \item $\tauTest_{j,i} \equiv \tauTest_{j,0}$ for some fixed $\tauTest_{j,0}>0$.
        \item $\sigmaTest_{\ell,i} \equiv \sigmaTest_{\ell,0}$ for some fixed $\sigmaTest_{\ell,0}>0$.
        \item $\eta_i \equiv \eta_0$ given by \eqref{eq:eta-choice-basic} and (in \cref{alg:alg-blockcp}) $\eta_{\tau,i}^\perp,\eta_{\sigma,i}^\perp > 0$ following \cref{lemma:etaperp-examples}.
    \end{enumerate}
    Then
    \begin{enumerate}[label=(\Roman*)]
    	\item The iterates of \cref{alg:alg-blockcp} satisfy $\gap(\tilde x_N, \tilde y_N) \le C_0\inv\eta_0/(2N)$, ($N \ge 1$).
    	\item The iterates of \cref{alg:alg-blockcp-fulldual} satisfy $\gap(\tilde x_{*,N}, \tilde y_{*,N}) \le C_0/[2\eta_0(N-1)]$, ($N \ge 2$).
    \end{enumerate}
\end{corollary}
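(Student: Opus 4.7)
The plan is to invoke \cref{cor:alg-blockcp-convergence} for \cref{alg:alg-blockcp} and \cref{cor:alg-blockcp-fulldual-convergence} for \cref{alg:alg-blockcp-fulldual}; both give the master estimate \eqref{eq:convergence-result-stochastic-block}, and the claim follows once its right-hand side is computed. The simplifying choices (constant $\tauTest_{j,i}$, constant $\sigmaTest_{\ell,i}$, implicit $\tilde\gamma_j=0$) collapse the penalty terms $d_{j,N}^x(0)$ and $d_{\ell,N}^y$ to zero and make $\eta_i\equiv\eta_0$ deterministic via \eqref{eq:eta-choice-basic}. So the approach reduces to (a) checking each hypothesis of the relevant corollary and (b) computing $\zeta_N$ and $\zeta_{*,N}$ explicitly.

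First I would verify the hypotheses in order. The structural assumptions \eqref{eq:g-fstar-separable}, \eqref{eq:tau-sigma-choice}, \eqref{eq:tautest-sigmatest-choice}, and \eqref{eq:lambda-structure} are inherited; the nesting condition \eqref{eq:nesting} and iteration-independent probabilities \eqref{eq:constant-probabilities} are part of assumption (i). Condition \eqref{eq:sigmatest-choice-1} holds by the very definition \eqref{eq:eta-choice-basic} of $\eta_0$, as spelled out in \cref{sec:worstcase}. Because $\sigmaTest_{\ell,i}\equiv\sigmaTest_{\ell,0}$, the dual conditions \eqref{eq:sigmatest-choice-0} and \eqref{eq:c2-y-primeprime-zero} hold trivially with equalities, hence so does \eqref{eq:c2-y-primeprime}. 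For the primal bound, inspection of \eqref{eq:q-j}--\eqref{eq:h-j} with $\tilde\gamma_j=0$ and $\tauTest_{j,i+1}=\tauTest_{j,i}$ shows immediately that $h_{j,i+2}(0)=0$ and $q_{j,i+2}(0)=-\delta\tauTest_{j,i}\chi_{S(i)}(j)\le 0$, so \eqref{eq:c2-x-primeprime-zero} is satisfied and both $\boundfx_{j,i+2}(0)$ and $\boundfy_{\ell,i+2}$ vanish. Finally, for \cref{alg:alg-blockcp} the conditions \eqref{eq:cond-eta-specific} and \eqref{eq:eta-relationships} on $\eta_{\tau,i}^\perp,\eta_{\sigma,i}^\perp$ follow from \cref{lemma:etaperp-examples}.

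With everything verified, \eqref{eq:convergence-result-stochastic-block} reduces to $\tilde g_N\le C_0$. Since $\tilde\gamma_j=0\le\gamma_j/2$ the gap term is active: $\tilde g_N=\zeta_N\gap(\tilde x_N,\tilde y_N)$ for \cref{alg:alg-blockcp} and $\tilde g_N=\zeta_{*,N}\gap(\tilde x_{*,N},\tilde y_{*,N})$ for \cref{alg:alg-blockcp-fulldual}. Because $\eta_i$ is deterministic and constant, $\bar\eta_i=\eta_0$, so \eqref{eq:zeta} gives $\zeta_N=N\eta_0$ and $\zeta_{*,N}=(N-1)\eta_0$; dividing through yields the two stated $O(1/N)$ gap rates. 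There is no real obstacle here — the corollary is a direct specialisation of the master estimate, and the only genuine checks are the sign conditions $q_{j,i+2}(0)\le 0$ and $h_{j,i+2}(0)\le 0$ under a constant primal test, which are immediate from the defining formulas.
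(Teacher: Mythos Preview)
Your proposal is correct and follows essentially the same route as the paper: specialise the master estimate \eqref{eq:convergence-result-stochastic-block} from \cref{cor:alg-blockcp-convergence,cor:alg-blockcp-fulldual-convergence} with $\tilde\gamma_j=0$ and constant tests, observe that $d_{j,N}^x(0)=d_{\ell,N}^y=0$, and compute $\zeta_N=N\eta_0$, $\zeta_{*,N}=(N-1)\eta_0$. You are actually more careful than the paper in spelling out why $q_{j,i+2}(0)\le 0$ and $h_{j,i+2}(0)=0$ force \eqref{eq:c2-x-primeprime-zero}; the paper just asserts the penalties vanish.
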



\subsection{Full primal strong convexity}
\label{sec:sigmatest-fullstrong-rnd}

If $G$ is fully strongly convex, we can naturally derive an $O(1/N^2)$ algorithm.

\begin{corollary}
    \label{cor:convergence-second-det-bounded}
    Assume the block-separable structure \eqref{eq:g-fstar-separable}, assuming each $G_j$, ($j=1,\ldots,m$), strongly convex with the corresponding factor $\gamma_j > 0$.
    Let $\delta \in (0, 1)$ and $(\kappa_1,\ldots,\kappa_n) \in \kappafamily$.
    In \cref{alg:alg-blockcp} or \cref{alg:alg-blockcp-fulldual}, take
    \begin{enumerate}[label=(\roman*)]
        \item $\tauTest_{j,0} > 0$ freely and $\tauTest_{j,i+1} \defeq \tauTest_{j,i}(1+2\bar\gamma_j\tau_{j,i})$ for some fixed $\bar\gamma_j \in (0, \gamma_j)$.
        \item $\sigmaTest_{\ell,0}>0$ freely and $\sigmaTest_{\ell,i} \defeq \sigmaTest_{\ell,0}$.
        \item $\eta_i$ according to \eqref{eq:eta-choice-basic}, and (in \cref{alg:alg-blockcp}) $\eta_{\tau,i}^\perp,\eta_{\sigma,i}^\perp > 0$ following \cref{lemma:etaperp-examples}.
    \end{enumerate}
    Suppose the initialisation bound \cref{thm:convergence-basic}\,\ref{item:conv:gamma} holds.
    Then
    \begin{gather*}
        \sum_{j=1}^m \frac{\delta\tilde c_j\bar\gamma_j}{2}\E\bigl[\norm{x_j^N-\realoptx_j}\bigr]^2
        + \tilde g_{1,N}
        \le
        \frac{\norm{u^0-\realoptu}_{\Test_0\Precond_0}^2}{2N^{2}}
        \quad (N \ge 4)
    \shortintertext{for}
        \tilde g_{1,N} \defeq
        \begin{cases}
             q_1 \gap(\tilde x_N, \tilde y_N), & \text{\cref{alg:alg-blockcp}},\, \tilde\gamma_j \le \gamma_j/2 \text{ for all } j, \\
             q_{*,1} \gap(\tilde x_{*,N}, \tilde y_{*,N}), & \text{\cref{alg:alg-blockcp-fulldual}},\, \tilde\gamma_j \le \gamma_j/2 \text{ for all } j, \\
             0, & \text{otherwise}.
        \end{cases}
    \end{gather*}
    The constants $\tilde c_j > 0$ are provided by \cref{lemma:tautest-rule-det} while $q_1, q_{*,1} > 0$.
\end{corollary}

\begin{proof}
    We adapt the argumentation of \cref{thm:convergence-basic} for the case $p=1/2$. Indeed, with this choice, our present assumptions satisfy the conditions of that theorem hold:
    \begin{itemize}[nosep]
        \item[\ref{item:convergence-basic-tautest}] with $\rho_j=0$ becomes the present one. Since we take $\bar\gamma_j>0$, $\rho_j+\bar\gamma_j>0$ as required.
        \item[\ref{item:convergence-basic-sigmatest}] reduces to the present one with $p=1/2$.
        \item[\ref{item:convergence-basic-eta}] becomes the present one since \eqref{eq:eta-choice-second} with $p=1/2$ equals \eqref{eq:eta-choice-basic}.
        \item[\ref{item:conv:primal}] trivially holds since $\rho_j=0$ for all $j=1,\ldots,m$.
        \item[\ref{item:conv:dual}] trivially holds when $p=1/2$.
        \item[\ref{item:conv:gamma}] we have assumed.
    \end{itemize}
    Since $C_*,\delta_*=0$ when $p=1/2$, the estimate \eqref{eq:convergence-result-stochastic-block-v1} therefore holds with the right hand side $C_0/(2N^{1+1/2})$. We need to improve this to $C_0/(2N^2)$ by improving testing variable estimates.

	Indeed, the update rule \eqref{eq:tautest-rule-det} now gives
	\begin{equation*}
	    \tauTest_{j,N}
	    \ge \MIN\tauTest_0 + \MIN\gamma \sum_{i=0}^{N-1} \eta_i
	    \ge \MIN\tauTest_0 + \MIN\gamma \sum_{i=0}^{N-1} \eta_i
	    \quad\text{with}\quad
	    \MIN\tauTest_0 \defeq \min_j \tauTest_{j,0} > 0.
	\end{equation*}
	\Cref{lemma:eta-lower-estim} shows $\eta_i^2 \ge \MIN b \min_j \tauTest_{j,i}$ for some $\MIN b$. Therefore
	$\eta_N^2
	    \ge
	    \MIN b \MIN\tauTest_0 + \MIN b \MIN\gamma\sum_{i=0}^{N-1} \eta_i$.	
	Otherwise written this says $\eta_N^2 \ge \tilde\eta_N^2$, where
	\[
	    \tilde\eta_N^2= 
	    \MIN b \MIN\tauTest_0 + \MIN b \MIN\gamma\sum_{i=0}^{N-1} \tilde\eta_i
	    =
	    \tilde\eta_{N-1}^2 + c^2\MIN\gamma\tilde\eta_{N-1} = \tilde\eta_{N-1}^2+\MIN b \MIN\gamma\inv{\tilde\eta_{N-1}}.
	\]
	This implies $\eta_i \ge \tilde\eta_i \ge c_\eta' i$ for some $c_\eta'>0$; cf.~the estimates for \eqref{eq:cpaccel} in \cite{chambolle2010first,tuomov-cpaccel}.
    Working through the final estimation stage of the proof of \cref{thm:convergence-basic} with $p=1/2$, we can now use use in \eqref{eq:partconv-zeta-rate} and \eqref{eq:partconv-zetatwo-rate} the estimate  $\eta_i \ge c_\eta' i$ that would otherwise correspond to $p=1$.
    In our final result, we write the constants $c_p$ and $c_{*,p}$ from the proof as $q_1, q_{*,1} > 0$.
\end{proof}

\begin{remark}[Linear rates]
    If both $G$ and $F^*$ are strongly convex, it is possible to derive linear rates.
    We refer to \cite{tuomov-proxtest} for the single-block deterministic case.
\end{remark}

\begin{remark}[Variance estimates]
    Variance can be estimated following \cite[Remark 3.4]{tuomov-proxtest}.
\end{remark}

\section{Numerical experience}
\label{sec:numerical}

\begin{figure}
    \centering
    \begin{subfigure}{0.245\textwidth}
        \includegraphics[width=\textwidth]{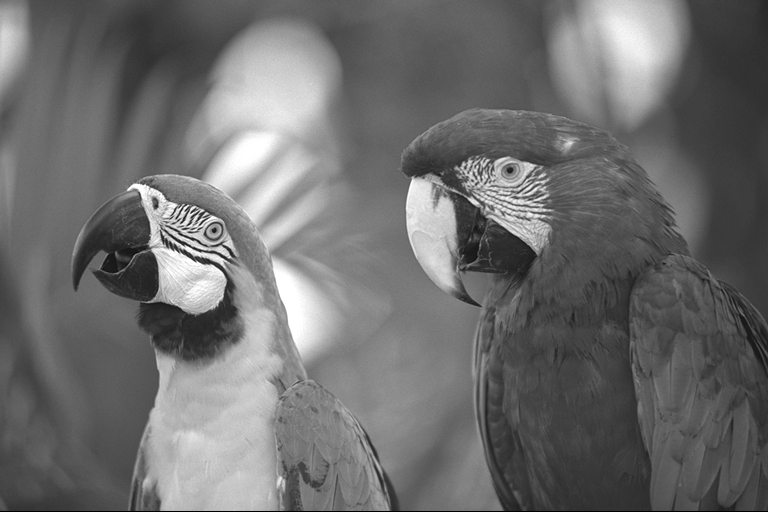}%
        \caption{True image}
    \end{subfigure}\,%
    \begin{subfigure}{0.245\textwidth}
        \includegraphics[width=\textwidth]{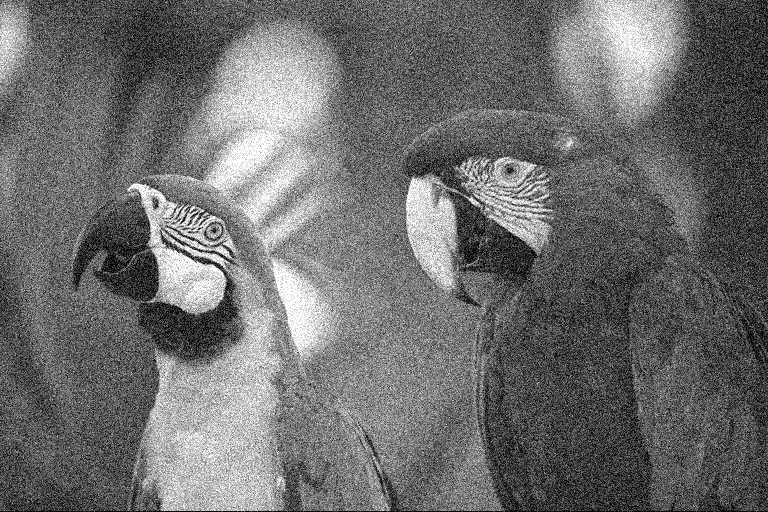}%
        \caption{Noisy image}
        \label{fig:noisy}
    \end{subfigure}\,%
    \begin{subfigure}{0.245\textwidth}
        \includegraphics[width=\textwidth]{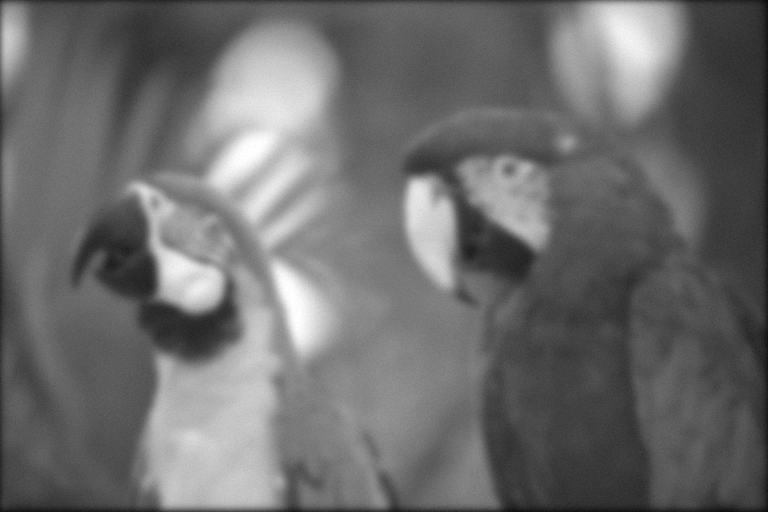}%
        \caption{Blurry image}
        \label{fig:blurry}
    \end{subfigure}\,%
    \begin{subfigure}{0.245\textwidth}
        \includegraphics[width=\textwidth]{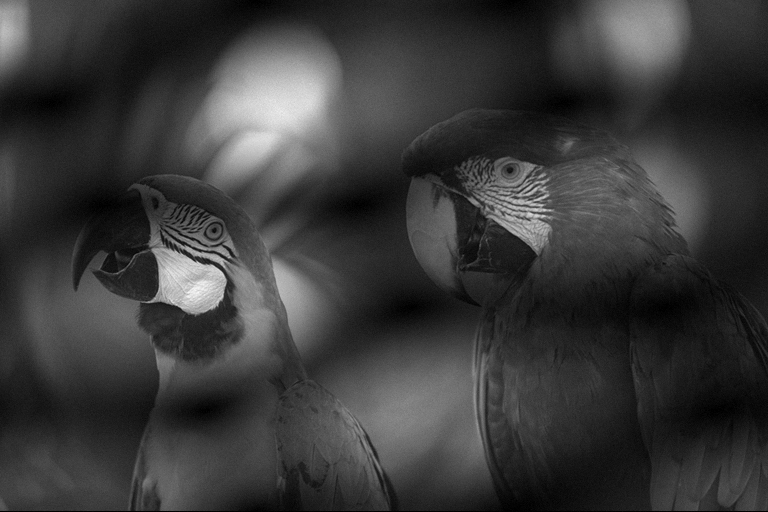}%
        \caption{Dimmed image}
        \label{fig:dimmed}
    \end{subfigure}
    \caption{Sample images for denoising, deblurring, and undimming experiments.
    } 
    \label{fig:image}
\end{figure}

We now apply several variants of the proposed algorithms to image processing problems.
We consider discretisations, as our methods are formulated in Hilbert spaces, but the space of functions of bounded variation---where image processing problems are typically formulated---is only a Banach space.
Our specific example problems will be $\TGV^2$ denoising, $\TV$ deblurring, and $\TV$ undimming.

We present the corrupt and ground-truth images in \cref{fig:image}, with values in the range $[0, 255]$. 
We use the images both at the original resolution of $n_1 \times n_2 = 768 \times 512$, and scaled down to $192 \times 128$ pixels. To the noisy high-resolution test image in \cref{fig:noisy}, we have added Gaussian noise with standard deviation $29.6$ ($12$dB). In the downscaled image, this becomes $6.15$ ($25.7$dB). 
The image in \cref{fig:blurry} we have distorted with Gaussian blur of standard deviation $4$. To avoid inverse crimes, we have added Gaussian noise of standard deviation $2.5$. The dimmed image in \cref{fig:dimmed}, we have distorted by multiplying the image with a sinusoidal mask $\gamma$; see \cref{fig:blurry}.
Again, we have added the small amount of noise.

Besides the unaccelerated PDHGM---our examples lack strong convexity for acceleration of basic methods---we evaluate our algorithms against the relaxed PDHGM of \cite{chambolle2014ergodic,he2012convergence}. In our precursor work \cite{tuomov-cpaccel}, we have also evaluated these two algorithms against the mixed-rate method of \cite{chen2015optimal}, and the adaptive PDHGM of \cite{goldstein2015adaptive}.
To keep our tables and figures easily legible, we also do not include the algorithms of \cite{tuomov-cpaccel} in our evaluations. It is worth noting that even in the two-block case, the algorithms presented in this paper will not reduce to those of that paper: our rules for $\sigma_{\ell, i}$ are very different from the rules for the single $\sigma_i$ therein. 

We define abbreviations of our algorithm variants in \cref{tab:alg-variants}. We do not report the results or apply all variants to all example problems, as this would not be informative. 
We demonstrate the performance of the stochastic variants on $\TGV^2$ denoising only. This merely serves as an example, as our problems are not large enough to benefit from being split on a computer cluster, where the benefits of stochastic approaches would be apparent.

\begin{table}
\caption{Algorithm variant name construction}
\label{tab:alg-variants}
\centering
\small
\begin{tabular}{l|l|l|l|l}
	Letter: & 1st & 2nd & 3rd & 4th
	\\ \hline
	&
	Randomisation &
	$\phi$ rule &
	$\eta$ and $\psi$ rules &
	$\kappa$ choice
	\\ \hline
	\hfill A- &
    D: Deterministic &
    R: Random, Lem.~\ref{example:tautest-rule-rnd} &
    B: Bounded: $p=\frac{1}{2}$ &
    O: Balanc., Ex.~\ref{ex:kappa-refined}
    \\
    &
    P: Primal only &
    D: Determ., Lem.~\ref{lemma:tautest-rule-det} &
    I: Increasing: $p=1$ &
    M: Max., Ex.~\ref{ex:kappa-max}
    \\
    &
    B: Primal \& Dual &
    C: Constant
    &
\end{tabular}
\end{table}

To rely on \cref{thm:convergence-basic} for convergence, we still need to satisfy \eqref{eq:c2-y-primeprime-bound} and \eqref{eq:c2-x-primeprime-bound}, or take $\rho_j=0$.
The bound $C_y$ in \cref{ass:step}\,\ref{item:c2-y-primeprime} is easily calculated, as in all of our example problems, the functional $F^*$ will restrict the dual variable to lie in a ball of known size.
The primal variable, on the other hand, is not explicitly bounded. It is however possible to prove data-based conservative bounds on the optimal solution, see, e.g., \cite[Appendix A]{tuomov-dtireg}. We can therefore add an artificial bound to the problem to force all iterates to be bounded, replacing $G$ by $\tilde G(x) \defeq G(x) + \delta_{B(0, C_x)}(x)$.
In practise, to avoid figuring out the exact magnitude of $C_x$, we update it dynamically. This avoids the constraint from ever becoming active and affecting the algorithm at all.
In \cite{tuomov-dtireg} a ``pseudo duality gap'' based on this idea was introduced to avoid problems with numerically infinite duality gaps. 
We will also use them in our reporting: we take the bound $C_x$ as the maximum over all iterations of all tested algorithms, and report the duality gap for the problem with $\tilde G$ replacing $G$.
We always report the pseudo-duality gap in decibels $10\log_{10}(\text{gap}^2/\text{gap}_0^2)$ relative to the initial iterate.

In addition to the pseudo-duality gap, we report for each algorithm the distance to a target solution and function value.
We report the distance in decibels $10\log_{10}(\norm{v^i-\realoptv}^2/\norm{\realoptv}^2)$ and the primal objective value $\text{val}(x) \defeq G(x)+F(Kx)$ relative to the target as
$10\log_{10}((\text{val}(x)-\text{val}(\hat x))^2/\text{val}(\hat x)^2)$.
The target solution $\realoptx$ we compute by taking one million iterations of the PDHGM.
We performed our computations with Matlab+C-MEX on a MacBook Pro with 16GB RAM and a 2.8 GHz Intel Core i5 CPU.
The initial iterates are $x^0=0$ and $y^0=0$.

\subsection{$\TGV^2$ denoising}

In this problem, we write $x=(v, w)$ and $y=(\phi,\psi)$, where $v$ is the image of interest, and take
\[
    G(x)=\frac{1}{2}\norm{f-v}^2,
    \quad
    K=\begin{pmatrix} \grad & -I \\ 0 & \Eabs \end{pmatrix},
    \quad\text{and}\quad
    F^*(y)=\delta_{B(0, \alpha)^{n_1n_2}}(\phi)+\delta_{B(0,\beta)^{n_1n_2}}(\psi).
\]
Here $\alpha,\beta>0$ are regularisation parameters, $\Eabs$ is the symmetrised gradient, and the balls are pixelwise Euclidean with the product $\Pi$ over image pixels.
Since there is no further spatial non-uniformity in this problem, it is natural to take as our projections $P_1x=v$, $P_2x=w$, $Q_1y=\phi$, and $Q_2y=\psi$.
It is then not difficult to calculate the optimal $\kappa_\ell$ of \cref{ex:kappa-refined}, so we use only the `xxxO' variants of the algorithms in \cref{tab:alg-variants}. 

As the regularisation parameters $(\beta, \alpha)$, we choose $(4.4, 4)$ for the downscaled image. For the original image we scale these parameters by $(0.25^{-2}, 0.25^{-1})$ corresponding to the image downscaling factor \cite{tuomov-tgvlearn}.
Since $G$ is not strongly convex with respect to $w$, we have $\tilde\gamma_2=0$.
For $v$ we take $\tilde\gamma_1=1/2$, corresponding to the gap versions of our convergence estimates.

We take $\delta=0.01$, and parametrise the standard PDHGM with $\sigma_0=1.9/\norm{K}$ and $\tau_0 \approx 0.52/\norm{K}$ solved from $\tau_0\sigma_0=(1-\delta)\norm{K}^2$. These are values that typically work well. For forward-differences discretisation of $\TGV^2$ with cell width $h=1$, we have $\norm{K}^2 \le 11.4$ \cite{tuomov-dtireg}.
For the `Relax' method from \cite{chambolle2014ergodic}, we use the same $\sigma_0$ and $\tau_0$, as well as the value $1.5$ for the inertial $\rho$ parameter.
For the increasing-$\psi$ `xxIx' variants of our algorithms, we take $\rho_1=\rho_2=5$, $\tau_{1,0}=\tau_0$, and $\tau_{2,0}=3\tau_0$. For the bounded-$\psi$ `xxBx' variants we take $\rho_1=\rho_2=5$, $\tau_{1,0}=\tau_0$, and $\tau_{2,0}=8\tau_0$. For both methods we also take $\eta_0=1/\tau_{0,1}$.
\emph{These parametrisations force $\phi_{1,0}=1/\tau_{1,0}^2$, and keep the initial step length $\tau_{1,0}$ for $v$ consistent with the basic PDHGM.} This justifies our algorithm comparisons using just a single set of parameters.
We plot the step length evolution for the A-DDBO variant in \cref{fig:step-length}\subref{fig:tgv2-steps}.

The results for deterministic variants of our algorithm are in \cref{table:tgv2-denoising-deterministic,fig:tgv2-denoising-deterministic}. We display the first 5000 iterations in a logarithmic fashion. To reduce computational overheads, we compute the reported quantities only every 10 iterations.
To reduce the effects of other processes occasionally occupying the computer, the CPU times reported are the average $\text{iteration\_time} = \text{total\_time}/\text{total\_iterations}$, excluding time spent initialising the algorithm.

Our first observation is that the variants `xDxx' based on the deterministic $\phi$ rule perform better than the ``random'' rule `xRxx'. Presently, with no randomisation, the only difference is the value of $\bar\gamma$. The value $0.0105$ from the initialisation bound \cref{thm:convergence-basic}\,\ref{item:conv:gamma} for $p=1/2$ and the value $0.0090$ for $p=1$ appear to give better performance than the maximal value $\tilde\gamma_1=0.5$.
Generally, the A-DDBO seems to have the best asymptotic performance, with A-DRBO close. A-DDIO has good initial performance, although especially on the higher resolution image, the PDHGM and `Relax' perform initially the best.
Overall, however, the question of the best performer seems to be a rather fair competition between `Relax' and A-DDBO.

\newlength{\swh}
\setlength{\swh}{0.23\textwidth}
\def\zinit{}
\def\finit{}

\begin{figure}[tbp!]
    \centering%
    \subcaptionbox{Gap: lo-res\zinit\label{fig:tgv2-denoising-deterministic-gap-lq-zinit}}{
        \includegraphics[height=\swh]{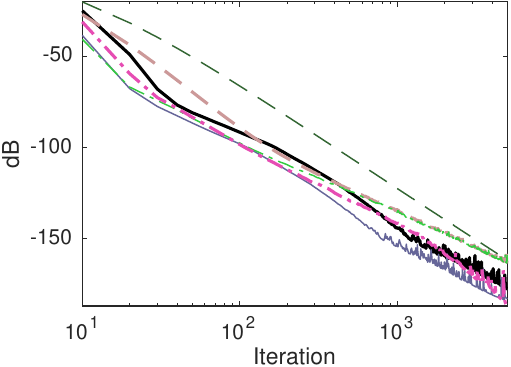}%
    }
    \subcaptionbox{Target: lo-res\zinit\label{fig:tgv2-denoising-deterministic-target-lq-zinit}}{
        \includegraphics[height=\swh]{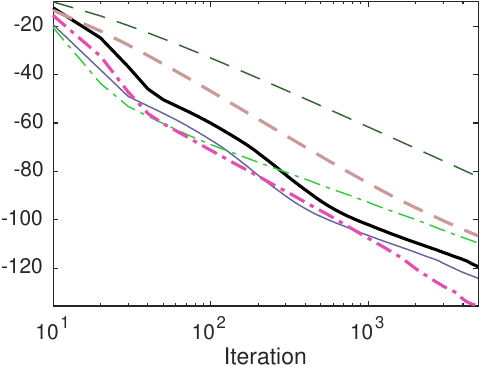}%
    }
    \subcaptionbox{Value: lo-res\zinit\label{fig:tgv2-denoising-deterministic-value-lq-zinit}}{
        \includegraphics[height=\swh]{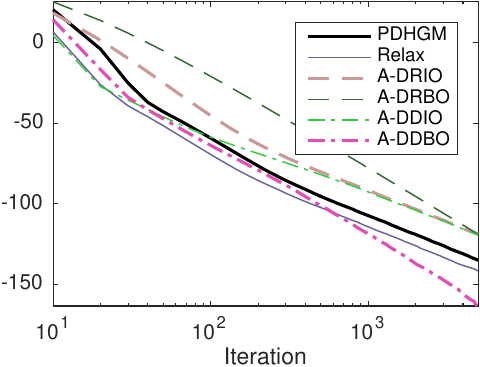}%
    }
    \subcaptionbox{Gap: hi-res\zinit\label{fig:tgv2-denoising-deterministic-gap-hq-zinit}}{
        \includegraphics[height=\swh]{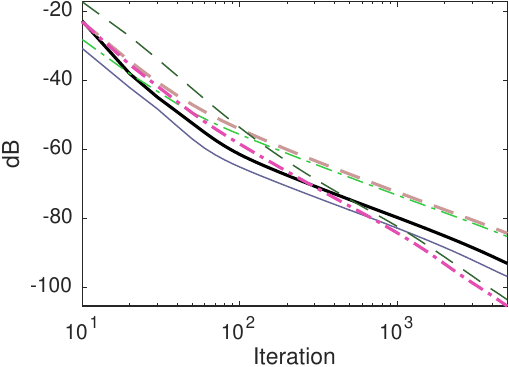}%
    }
    \subcaptionbox{Target: hi-res\zinit\label{fig:tgv2-denoising-deterministic-target-hq-zinit}}{
        \includegraphics[height=\swh]{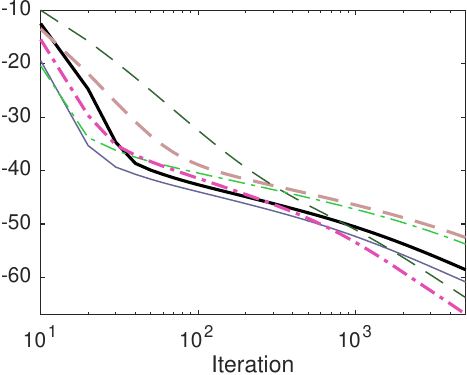}%
    }
    \subcaptionbox{Value: hi-res\zinit\label{fig:tgv2-denoising-deterministic-value-hq-zinit}}{
        \includegraphics[height=\swh]{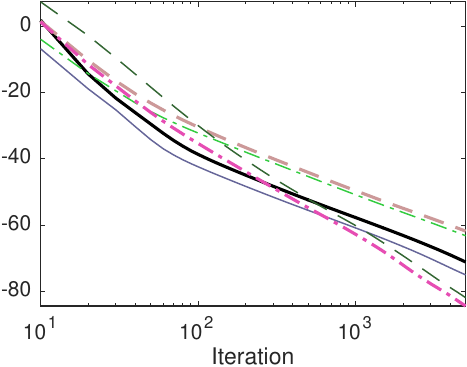}%
    }
    \caption{$\TGV^2$ denoising, deterministic variants of our algorithms with pixelwise step lengths, 5000 iterations, high (hi-res) and low (lo-res) resolution images.   
    }
    \label{fig:tgv2-denoising-deterministic}
\end{figure}

\begin{table}[tbp!]
    \caption{$\TGV^2$ denoising performance: CPU time and number of iterations (at a resolution of 10) to reach given duality gap, distance to target, or primal objective value.}
    \label{table:tgv2-denoising-deterministic}
    \centering
    \small
    \setlength{\tabcolsep}{2pt}
    \begin{tabular}{l|rr|rr|rr}
\multicolumn{7}{c}{low resolution}\\
\hline
 & \multicolumn{2}{c}{gap $\le -60$dB} & \multicolumn{2}{|c}{tgt $\le -60$dB} & \multicolumn{2}{|c}{val $\le -60$dB}\\
Method & iter & time & iter & time & iter & time\\
\hline
PDHGM & 30 & 0.21s & 100 & 0.72s & 110 & 0.79s\\
Relax & 20 & 0.20s & 70 & 0.71s & 70 & 0.71s\\
A-DRIO & 40 & 0.26s & 230 & 1.55s & 180 & 1.22s\\
A-DRBO & 80 & 0.54s & 890 & 6.07s & 500 & 3.41s\\
A-DDIO & 20 & 0.14s & 50 & 0.36s & 110 & 0.80s\\
A-DDBO & 30 & 0.19s & 50 & 0.32s & 90 & 0.58s\\
\end{tabular}

    \,%
    \begin{tabular}{rr|rr|rr}
\multicolumn{6}{c}{high resolution}\\
\hline
\multicolumn{2}{c}{gap $\le -50$dB} & \multicolumn{2}{|c}{tgt $\le -50$dB} & \multicolumn{2}{|c}{val $\le -50$dB}\\
iter & time & iter & time & iter & time\\
\hline
50 & 6.31s & 870 & 111.83s & 370 & 47.49s\\
40 & 6.93s & 580 & 102.89s & 250 & 44.25s\\
70 & 9.17s & 2750 & 365.52s & 1050 & 139.48s\\
80 & 10.56s & 860 & 114.81s & 420 & 56.00s\\
60 & 7.37s & 2140 & 267.29s & 900 & 112.34s\\
60 & 7.85s & 600 & 79.67s & 340 & 45.09s\\
\end{tabular}

\end{table}

\begin{figure}[tbp!]
    \centering%
    \subcaptionbox{denoising\label{fig:tgv2-steps}}{
        \includegraphics[height=\swh]{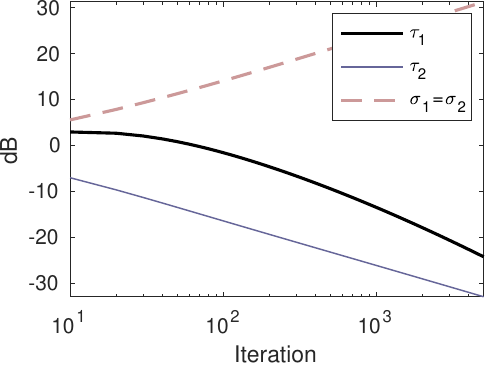}%
    }
    \subcaptionbox{deblurring $\sigma$}{
        \includegraphics[height=\swh]{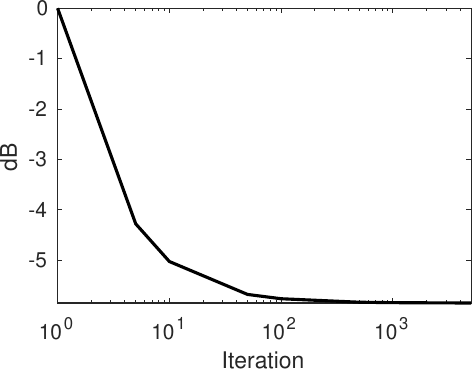}%
    }
    \subcaptionbox{deblurring $\tau$ colour coding}{
        \includegraphics[width=1.3\swh]{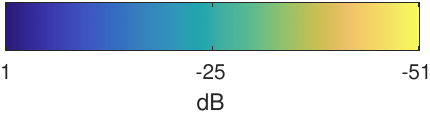}%
    }\\%
    \subcaptionbox{deblurring $\tau$, $i=10$}{
        \includegraphics[height=0.9\swh]{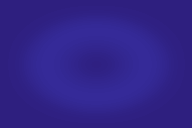}%
    }%
    \subcaptionbox{deblurring $\tau$, $i=50$}{
        \includegraphics[height=0.9\swh]{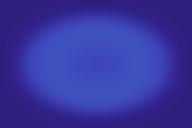}%
    }%
    \subcaptionbox{deblurring $\tau$, $i=100$}{
        \includegraphics[height=0.9\swh]{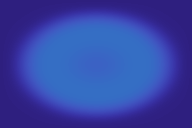}%
    }\\%
    \subcaptionbox{deblurring $\tau$, $i=500$}{
        \includegraphics[height=0.9\swh]{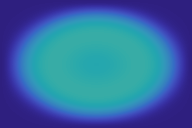}%
    }%
    \subcaptionbox{deblurring $\tau$, $i=1000$}{
        \includegraphics[height=0.9\swh]{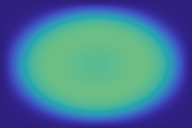}%
    }%
    \subcaptionbox{deblurring $\tau$, $i=5000$}{
        \includegraphics[height=0.9\swh]{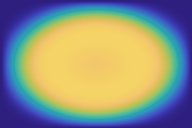}%
    }%
    \caption{Step length evolution (logarithmic from initialisation). A-DDBO $\TGV^2$ denoising and A-DDIM $\TV$ deblurring. The $\tau$ plots of the latter are images in the Fourier domain, lighter colour means smaller value of $\tau$ \emph{relative to initialisation} (for that specific Fourier component). Note that the images depict logarithm \emph{change}, not absolute values.}
    \label{fig:step-length}
\end{figure}

\subsection{$\TGV^2$ denoising with stochastic algorithm variants}

We also test stochastic variants of our algorithms based on the alternating sampling of \cref{ex:sampling-fixed-number} with $M=1$ and, when appropriate, \cref{ex:sampling-alternating}. We take all probabilities equal to $0.5$, that is $\mathbb{p}_x=\tilde\pi_1=\tilde\pi_2=\tilde\nu_1=\tilde\nu_2=0.5$.
In the doubly-stochastic `Bxxx' variants of the algorithms, we take $\eta_{\tau,i}^\perp=\eta_{\sigma,i}^\perp=0.9 \cdot 0.5 \eta_i$ following the proportional rule \cref{lemma:etaperp-examples}\ref{item:etaperp-proportional}. 

The results are in \cref{fig:tgv2-denoising-random}. To conserve space, we have only included a few descriptive algorithm variants.
On the $x$ axis, to better describe to the amount of actual work performed by the stochastic methods, the ``iteration'' count refers to the \emph{expected} number of full primal--dual updates. 
For all the displayed stochastic variants, with the present choice of probabilities, the expected number of full updates in each iteration is $0.75$.

We run each algorithm 50 times, and plot for each iteration the 90\% confidence interval according to Student's $t$-distribution. Towards the 5000th iteration, these generally become very narrow, indicating reliability of the random method.
Overall, the full-dual-update `Pxxx' variants perform better than the doubly-stochastic `Bxxx' variants. In particular, A-PDBO has performance comparable to or even better than the PDHGM.

\begin{figure}[tbp!]
    \centering%
    \subcaptionbox{Gap: lo-res\zinit\label{fig:tgv2-denoising-random-gap-lq-zinit}}{
        \includegraphics[height=\swh]{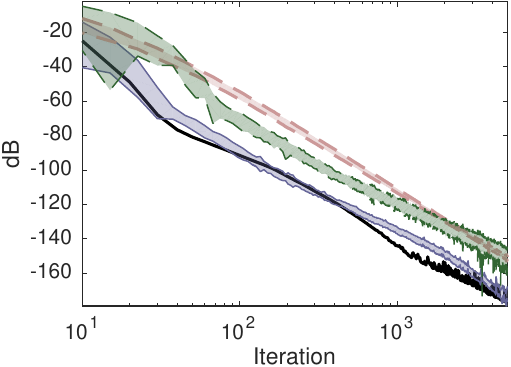}%
    }
    \subcaptionbox{Target: lo-res\zinit\label{fig:tgv2-denoising-random-target-lq-zinit}}{
        \includegraphics[height=\swh]{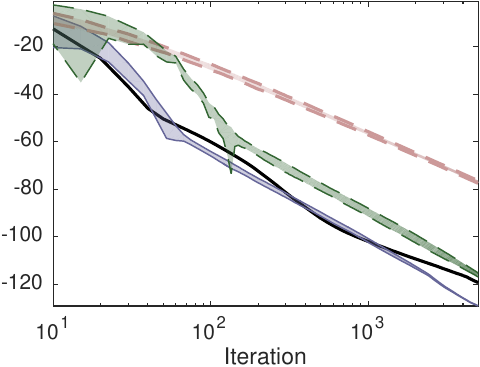}%
    }
    \subcaptionbox{Value: lo-res\zinit\label{fig:tgv2-denoising-random-value-lq-zinit}}{
        \includegraphics[height=\swh]{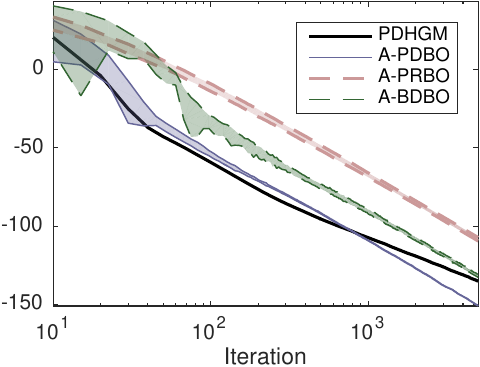}%
    }
    \caption{$\TGV^2$ denoising, stochastic variants of our algorithms: 5000 iterations, low resolution images. 
    Iteration number scaled by the fraction of blocks updated on average. For each iteration, 90\% confidence interval according to the $t$-distribution over 50 random runs.}
    \label{fig:tgv2-denoising-random}
\end{figure}

\subsection{TV deblurring}

We want to remove the blur in \cref{fig:blurry}.
We do this by taking
\[
	G(x)=\frac{1}{2}\norm{f-\mathcal{F}^*(a\mathcal{F}x)}^2,
	\quad
	K=\grad,
	\quad\text{and}\quad
	F^*(y)=\delta_{B(0,\alpha)^{n_1n_2}}(y),
\]
where the balls are again pixelwise Euclidean, and  $\mathcal{F}$ the discrete Fourier transform. The factors $a=(a_1,\ldots,a_m)$ model the blurring operation in Fourier basis. 

We take $\alpha=2.55$ for the high resolution image and scale this to $\alpha=2.55*0.15$ for the low resolution image.
We parametrise the PDHGM and `Relax' algorithms exactly as for $\TGV^2$ denoising above, taking into account the estimate $8 \ge \norm{K}^2$ \cite{chambolle2004meanalgorithm}. 
We take $Q_1=I$ and $P_j$ as the projection to the $j$:th Fourier component so $m=n_1n_2$ and $n=1$. Thus \emph{each primal Fourier component has its own step length parameter}.
We initialise the latter as $\tau_{j,0}=\tau_0/(\lambda+(1-\lambda)\gamma_j)$, where the componentwise factor of strong convexity $\gamma_j=\abs{a_j}^2$.
For the bounded-$\psi$ `xxBx` algorithm variants we take $\lambda=0.01$, and for the increasing-$\psi$ `xxIx' variants $\lambda=0.1$.
We illustrate the step length evolution of the variant A-DDIM in \cref{fig:step-length}.

We only experiment with deterministic algorithms, as we do not expect small-scale randomisation to be beneficial. We also use the maximal $\kappa$ `xxxM' variants, as a more optimal $\kappa$ would be difficult to compute. 
The results are in \cref{table:tv-deblurring-deterministic,fig:tv-deblurring-deterministic}.
Similarly to A-DDBO in our $\TGV^2$ denoising experiments, A-DDBM performs reliably well, indeed better than the PDHGM or `Relax'. However, in many cases, A-DRBM and A-DDIM are even faster. 

\begin{figure}[tbp!]
    \centering%
    \subcaptionbox{Gap: lo-res\zinit\label{fig:tv-deblurring-deterministic-gap-lq-zinit}}{
        \includegraphics[height=\swh]{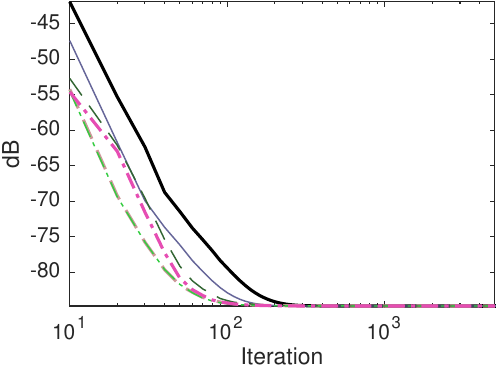}%
    }
    \subcaptionbox{Target: lo-res\zinit\label{fig:tv-deblurring-deterministic-target-lq-zinit}}{
        \includegraphics[height=\swh]{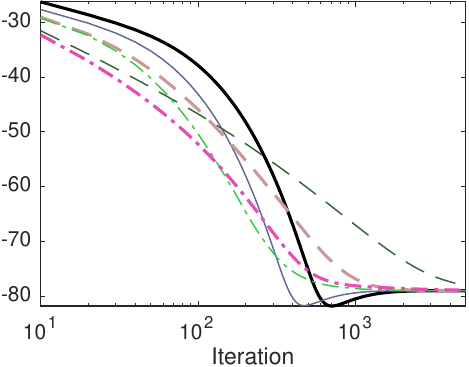}%
    }
    \subcaptionbox{Value: lo-res\zinit\label{fig:tv-deblurring-deterministic-value-lq-zinit}}{
        \includegraphics[height=\swh]{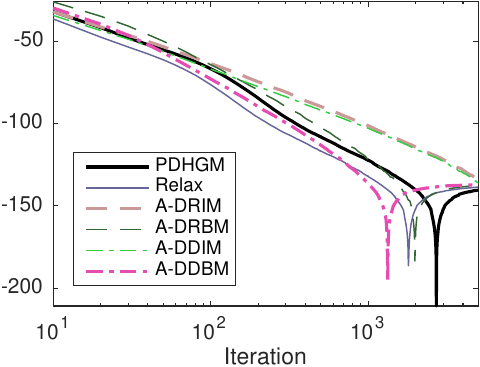}%
    }
    \subcaptionbox{Gap: hi-res\zinit\label{fig:tv-deblurring-deterministic-gap-hq-zinit}}{
        \includegraphics[height=\swh]{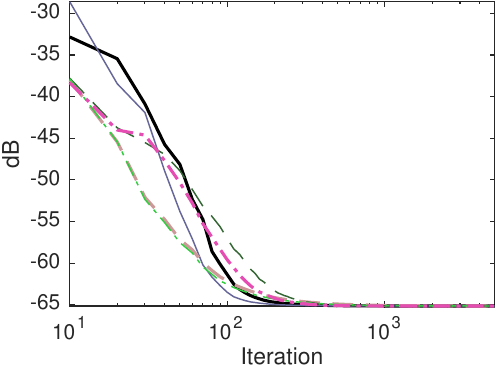}%
    }
    \subcaptionbox{Target: hi-res\zinit\label{fig:tv-deblurring-deterministic-target-hq-zinit}}{
        \includegraphics[height=\swh]{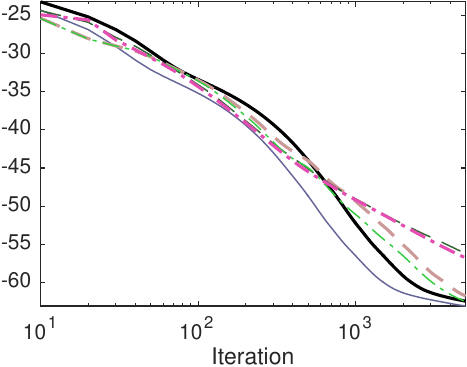}%
    }
    \subcaptionbox{Value: hi-res\zinit\label{fig:tv-deblurring-deterministic-value-hq-zinit}}{
        \includegraphics[height=\swh]{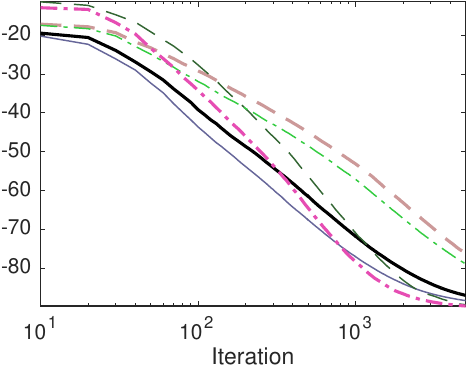}%
    }
    \caption{$\TV$ deblurring, deterministic variants of our algorithms with pixelwise step lengths, first 5000 iterations, high (hi-res) and low (lo-res) resolution images.
    }
    \label{fig:tv-deblurring-deterministic}
\end{figure}

\begin{table}[tbp!]
    \caption{$\TV$ deblurring performance: CPU time and number of iterations (at a resolution of 10) to reach given duality gap, distance to target, or primal objective value.}
    \label{table:tv-deblurring-deterministic}
    \centering
    \small
    \setlength{\tabcolsep}{3pt}
    \begin{tabular}{l|rr|rr|rr}
\multicolumn{7}{c}{low resolution}\\
\hline
 & \multicolumn{2}{c}{gap $\le -60$dB} & \multicolumn{2}{|c}{tgt $\le -60$dB} & \multicolumn{2}{|c}{val $\le -60$dB}\\
Method & iter & time & iter & time & iter & time\\
\hline
PDHGM & 30 & 0.18s & 330 & 2.05s & 70 & 0.43s\\
Relax & 20 & 0.11s & 220 & 1.30s & 50 & 0.29s\\
A-DRIM & 20 & 0.14s & 280 & 2.08s & 80 & 0.59s\\
A-DRBM & 20 & 0.14s & 490 & 3.58s & 90 & 0.65s\\
A-DDIM & 20 & 0.14s & 170 & 1.25s & 70 & 0.51s\\
A-DDBM & 20 & 0.15s & 180 & 1.37s & 60 & 0.45s\\
\end{tabular}

    \ %
    \begin{tabular}{rr|rr|rr}
\multicolumn{6}{c}{high resolution}\\
\hline
\multicolumn{2}{c}{gap $\le -50$dB} & \multicolumn{2}{|c}{tgt $\le -40$dB} & \multicolumn{2}{|c}{val $\le -40$dB}\\
iter & time & iter & time & iter & time\\
\hline
60 & 5.04s & 330 & 28.12s & 110 & 9.31s\\
50 & 4.32s & 220 & 19.30s & 90 & 7.84s\\
30 & 3.27s & 280 & 31.41s & 320 & 35.92s\\
60 & 6.48s & 240 & 26.27s & 220 & 24.07s\\
30 & 3.17s & 260 & 28.35s & 230 & 25.06s\\
50 & 5.56s & 230 & 25.98s & 150 & 16.90s\\
\end{tabular}

\end{table}

\subsection{TV undimming}

We take $K$ and $F^*$ as for TV deblurring, but $G(u) := \frac{1}{2}\norm{f-\gamma \cdot u}^2$ for the sinusoidal dimming mask $\gamma: \Omega \to \R$.
Our experimental setup is also nearly the same as TV deblurring, with the natural difference that the projection $P_j$ are no longer to the Fourier basis, but to individual image pixels. The results are in \cref{fig:tv-undimming-deterministic}, and \cref{table:tv-undimming-deterministic}. They tell roughly the same story as TV deblurring, with A-DDBM performing well and reliably.

\begin{figure}[tbp!]
    \centering%
    \subcaptionbox{Gap: lo-res\zinit\label{fig:tv-undimming-deterministic-gap-lq-zinit}}{
        \includegraphics[height=\swh]{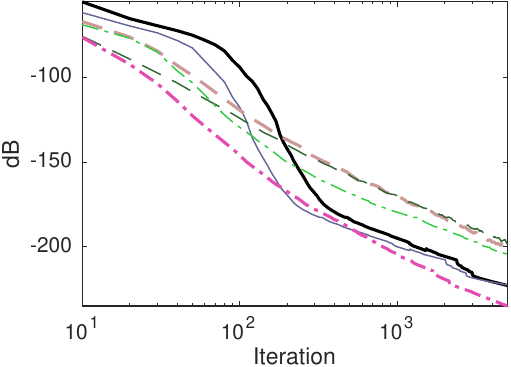}%
    }
    \subcaptionbox{Target: lo-res\zinit\label{fig:tv-undimming-deterministic-target-lq-zinit}}{
        \includegraphics[height=\swh]{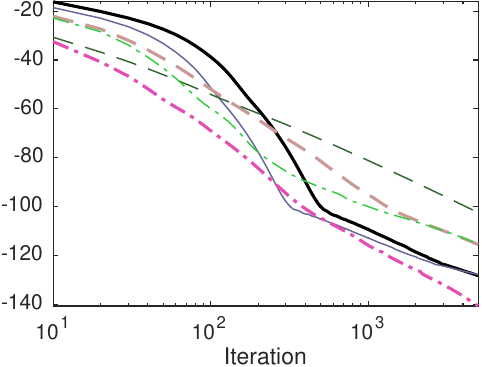}%
    }
    \subcaptionbox{Value: lo-res\zinit\label{fig:tv-undimming-deterministic-value-lq-zinit}}{
        \includegraphics[height=\swh]{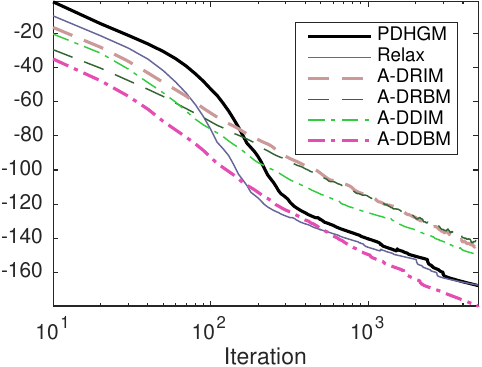}%
    }
    \subcaptionbox{Gap: hi-res\zinit\label{fig:tv-undimming-deterministic-gap-hq-zinit}}{
        \includegraphics[height=\swh]{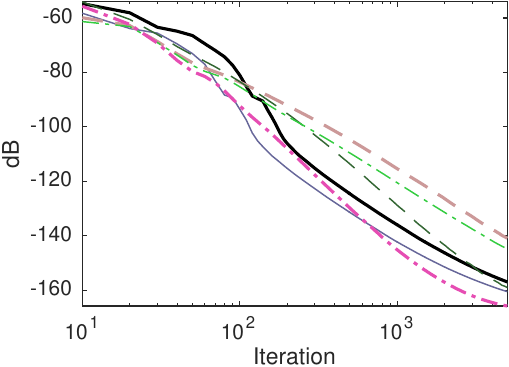}%
    }
    \subcaptionbox{Target: hi-res\zinit\label{fig:tv-undimming-deterministic-target-hq-zinit}}{
        \includegraphics[height=\swh]{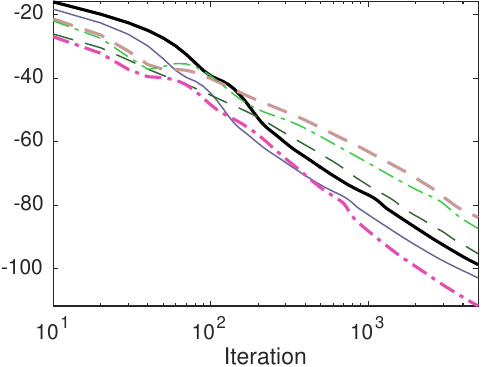}%
    }
    \subcaptionbox{Value: hi-res\zinit\label{fig:tv-undimming-deterministic-value-hq-zinit}}{
        \includegraphics[height=\swh]{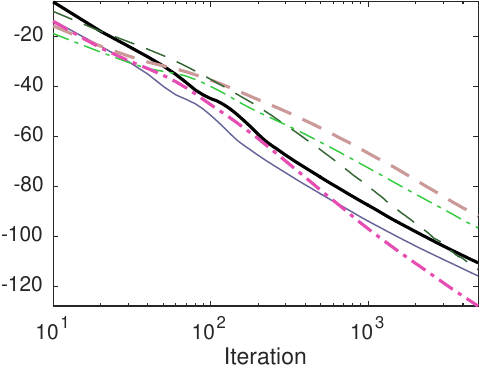}%
    }
    \caption{$\TV$ undimming, deterministic variants of our algorithms with pixelwise step lengths, 5000 iterations, high (hi-res) and low (lo-res) resolution images. 
    }
    \label{fig:tv-undimming-deterministic}
\end{figure}

\begin{table}[tbp!]
    \caption{$\TV$ undimming performance: CPU time and number of iterations (at a resolution of 10) to reach given duality gap, distance to target, or primal objective value.}
    \label{table:tv-undimming-deterministic}
    \centering
    \small
    \setlength{\tabcolsep}{3pt}
    \begin{tabular}{l|rr|rr|rr}
\multicolumn{7}{c}{low resolution}\\
\hline
 & \multicolumn{2}{c}{gap $\le -80$dB} & \multicolumn{2}{|c}{tgt $\le -60$dB} & \multicolumn{2}{|c}{val $\le -60$dB}\\
Method & iter & time & iter & time & iter & time\\
\hline
PDHGM & 70 & 0.18s & 200 & 0.51s & 120 & 0.30s\\
Relax & 50 & 0.16s & 130 & 0.41s & 80 & 0.25s\\
A-DRIM & 30 & 0.10s & 160 & 0.57s & 80 & 0.28s\\
A-DRBM & 20 & 0.05s & 170 & 0.47s & 60 & 0.16s\\
A-DDIM & 30 & 0.08s & 110 & 0.30s & 60 & 0.16s\\
A-DDBM & 20 & 0.05s & 70 & 0.18s & 40 & 0.10s\\
\end{tabular}

    \ %
    \begin{tabular}{rr|rr|rr}
\multicolumn{6}{c}{high resolution}\\
\hline
\multicolumn{2}{c}{gap $\le -80$dB} & \multicolumn{2}{|c}{tgt $\le -60$dB} & \multicolumn{2}{|c}{val $\le -60$dB}\\
iter & time & iter & time & iter & time\\
\hline
100 & 3.41s & 300 & 10.31s & 210 & 7.21s\\
70 & 3.03s & 200 & 8.73s & 140 & 6.10s\\
80 & 3.52s & 760 & 33.82s & 640 & 28.48s\\
90 & 3.95s & 370 & 16.39s & 380 & 16.84s\\
70 & 3.05s & 580 & 25.57s & 430 & 18.94s\\
60 & 2.63s & 230 & 10.22s & 200 & 8.88s\\
\end{tabular}

\end{table}

\section*{Conclusions}

We have derived several accelerated block-proximal primal--dual methods, both stochastic and deterministic. We have concentrated on applying them deterministically, taking advantage of blockwise---indeed pixelwise---factors of strong convexity, to obtain improved performance compared to standard methods. In future work, it will be interesting to evaluate the methods on real large scale problems to other state-of-the-art stochastic optimisation methods. Moreover, interesting questions include heuristics and other mechanisms for optimal initialisation of the pixelwise parameters, as well as combination with over-relaxation and inertial schemes, such as the extensions of the PDHGM considered in \cite{condat2013primaldual,vu2013splitting,he2014convergence,tuomov-inertia}.

\section*{Acknowledgements}

The author would like to thank Peter Richtárik and Olivier Fercoq for several fruitful discussions, and for introducing him to stochastic optimisation.
Moreover, the support of the EPSRC grant EP/M00483X/1 ``Efficient computational tools for inverse imaging problems'' is acknowledged during the initial two months of the research.

\section*{A data statement for the EPSRC}

Implementations of the algorithms described in the paper, and relevant boilerplate codes, are available on Zenodo at \doi{10.5281/zenodo.1042419}.
The sample photo, also included in the archive, is from the free Kodak image suite, at the time of writing at \url{http://r0k.us/graphics/kodak/}.

\def\bibnamefont#1{\textsc{\sffamily #1}}
\bibliographystyle{texbase/poor_approx_of_shinybib}

 \providecommand{\eprint}[1]{\href{http://arxiv.org/abs/#1}{arXiv:#1}}
  \providecommand{\eprint}[1]{\href{http://arxiv.org/abs/#1}{arXiv:#1}}
  \providecommand{\noopsort}[1]{}


\end{document}